\def\blfootnote{\xdef\@thefnmark{}\@footnotetext}
\theoremstyle{plain}
\newtheorem{lem}{Lemma}[section]
\newtheorem{coro}[lem]{Corollary}
\newtheorem{teo}[lem]{Theorem}
\newtheorem{propo}[lem]{Proposition}
\theoremstyle{definition}
\newtheorem{exa}[lem]{Example}
\newtheorem{cons}[lem]{Construction}
\newtheorem{rema}[lem]{Remark}
\newtheorem{defi}[lem]{Definition}
\DeclareMathAlphabet{\mathbbe}{U}{bbold}{m}{n}
\newcommand{\simplexcategory}{\mathbbe{\Delta}}
 \newcommand{\name}[1]{\ulcorner #1\urcorner}
\newcommand{\actto}{\rightarrow\Mapsfromchar}
\tikzset{  act /.tip = >|}
\tikzset{pullback/.style={minimum size=1.2ex,path picture={
\draw[opacity=1,black,-,#1] (-0.5ex,-0.5ex) -- (0.5ex,-0.5ex) -- (0.5ex,0.5ex);%
}}}
 \newcommand{\isopil}{\stackrel{\raisebox{0.1ex}[0ex][0ex]{\(\sim\)}}%
			{\raisebox{-0.15ex}[0.28ex]{\(\rightarrow\)}}}
\newcommand{\drpullbacko}[1][dr]{\save*!/#1-4ex/#1:(-1,1)@^{|-}\restore}
\newcommand{\drpullback}{\arrow[phantom]{dr}[very near start,description]{\lrcorner}}
\newcommand{\dlpullback}{\arrow[phantom]{dl}[very near start,description]{\llcorner}}
\newcommand{\Map}{\operatorname{map}}
\newcommand{\Obj}{\operatorname{Obj}}
\newcommand{\op}{\operatorname{op}}
\newcommand{\CULF}{\operatorname{culf}}
\newcommand{\identity}{\operatorname{id}}
\newcommand{\Dec}{\operatorname{Dec}}
\newcommand{\longt}{\operatorname{long}}
\newcommand{\Fib}{\operatorname{Fib}}
\newcommand{\hFib}{\operatorname{hFib}}
\newcommand{\mfunctor}{\operatorname{M}}
\newcommand{\ledge}{\operatorname{\varpi}}
\newcommand{\Ar}{\operatorname{Ar}}
\newcommand{\cart}{\operatorname{cart}}
\newcommand{\dominio}{\operatorname{dom}}
\newcommand{\fffunctor}{\operatorname{f.f.}}
\newcommand{\act}{\operatorname{act}}
\newcommand{\pil}{\pi_{\operatorname{last}}}
\newcommand{\pilong}{\pi_{\operatorname{long}}}
\newcommand{\pif}{\pi_{\operatorname{first}}}
\newcommand{\piend}{\pi_{\operatorname{endvertex}}}
\newcommand{\strs}{\operatorname{str}}
\newcommand{\tildeb}[1]{\stackrel{\sim}{\smash{#1}\rule{0pt}{1.1ex}}}
\newcommand{\kat}[1]{\mathbf{#1}}
\newcommand{\Set}{\kat{Set}}
\newcommand{\Grpd}{\kat{Grpd}}
\newcommand{\cDcmp}{\kat{cDcmp}}
\newcommand{\Interval}{\kat{aInt}}
\renewcommand{\tableofcontents}{%
   \begin{center}
\begin{minipage}{115mm}
   \begin{center}
                \bf{\contentsname}
        \end{center}
   \footnotesize
   \begin{center}
                \@starttoc{toc}
        \end{center}
\end{minipage}
        \end{center}
        \addvspace{3em \@plus\p@}
}
\newtheorem{taller}[lem]{$\!\!$}
\newenvironment{blanko}[1]%
{\begin{taller}{\normalfont\bfseries  #1}\normalfont}%
{\end{taller}}
\newenvironment{blanko*}[1]{\begin{list}{\bf {#1} }%
{\setlength{\labelsep}{0mm}\setlength{\leftmargin}{0mm}%
\setlength{\labelwidth}{0mm}\setlength{\listparindent}{\parindent}%
\setlength{\parsep}{\parskip}\setlength{\partopsep}{0mm}}%
\item%
}{\end{list}}
\begin{document}

\title{\Large\textbf{THE GÁLVEZ--KOCK--TONKS CONJECTURE FOR LOCALLY DISCRETE DECOMPOSITION SPACES}\footnote{Previously circulated under the title THE GÁLVEZ--KOCK--TONKS CONJECTURE FOR DISCRETE DECOMPOSITION SPACES}} 
\author{\normalsize{WILSON FORERO}} 
\date{}
\maketitle

\begin{abstract}
G\'alvez-Carrillo, Kock, and Tonks~\cite{GTK3} constructed a decomposition space $U$ of all M\"obius intervals, as a recipient of Lawvere's interval construction for M\"obius categories, and conjectured that $U$ enjoys a certain universal property: for every M\"obius decomposition space $X$, the space of culf functors from $X$ to $U$ is contractible. In this paper, we work at the level of homotopy 1-types to prove the first case of the conjecture, namely for locally discrete decomposition spaces. This provides also the first substantial evidence for the general conjecture.  
 
This case is general enough to cover all locally finite posets, Cartier--Foata monoids, M\"obius categories and strict (directed) restriction species. The proof is 2-categorical. First, we construct a local strict model of $U$, which is then used to show by hand that the Lawvere interval construction, considered as a natural transformation, does not admit other self-modifications than the identity.  
\end{abstract}

\tableofcontents

\section*{Introduction}
\label{sec:intro}
\addcontentsline{toc}{section}{Introduction}
Incidence algebras and M\"obius inversion form a cornerstone of combinatorics. It has important applications in many areas of
mathematics. Beyond the original applications in number theory (see Hardy and Wright~\cite{Hardy-Wright}) and group theory
(\cite{Weisner} and \cite{Hall}), one can cite applictions in probability theory~\cite{Nica-Speicher} and algebraic topology~\cite{Hopkins-Kuhn-Ravenel}, and it is also closely related to Hopf-algebraic renormalisation~\cite{Kock:Mrenorm} in quantum field theory.

Since Rota formalised the theory~\cite{Rota1987}, \cite{Rota} (on the grounds of previous contributions~\cite{Weisner}, \cite{Hall})
the standard framework for the theory has been that of posets, but the theory has also been
used in the context of monoids~\cite{Cartier}, and in the more
general framework of certain categories called M\"obius categories,
introduced by Leroux~\cite{Lero}. The uniform appearance of
the M\"obius inversion formula across all application areas led
Lawvere~\cite{Lawnotes} in the 1980s to discover that
there is a universal M\"obius function which induces all other
M\"obius functions. It is an `arithmetic function' on a certain Hopf
algebra of M\"obius intervals. A category is an interval if it has
an initial and a terminal object~\cite{LawInterval}, and the M\"obius condition is a certain finiteness condition. This Hopf algebra has the property
that it receives a canonical coalgebra homomorphism from every
incidence coalgebra of a M\"obius category. This includes all
locally finite posets and all the monoids considered in
\cite{Cartier}. Lawvere's work remained unpublished for some
decades, but it is cited in influential texts from that time, such
as Joyal~\cite{Joyal:1981} and Joni--Rota~\cite{Rota}.
Independently, Ehrenborg~\cite{REhre} constructed a closely
related Hopf algebra, but less universal. It only accounts for
intervals in posets. In both cases, the universal object can be
interpreted as the colimit of all incidence coalgebras of intervals.
The possibility of this is closely related to the local nature of
coalgebras, expressed for example in the well-known fact that every
coalgebra is the colimit of its finite-dimensional subcoalgebras,
see Sweedler~\cite{Sweedler}.

Lawvere's discovery did not appear in print until
Lawvere--Menni~\cite{Law} in 2010. In that work the
authors took an important step towards explaining the universal
property by lifting the construction of the Hopf algebra of M\"obius
intervals to the objective level. This means that its
comultiplication is realised as something called a pro-comonoidal
structure on certain extensive categories. The original Hopf algebra
is exhibited as being only a numerical shadow of this
categorical construction. There are at least two precursors to the
idea of a more objective approach to incidence algebras. One is
given by Joyal~\cite{Joyal:1981}. In his foundational paper on
species, there is a final section where he considers certain
decomposition structures on categories (that final section has
little to do with species). Another is in the work of
D\"ur~\cite{Dur} who constructed incidence coalgebras of
certain categorical and simplicial structures.

However,
\begin{quote}
{\em many coalgebras, bialgebras and Hopf algebras in
combinatorics are not of incidence type},
\end{quote}
meaning that they cannot arise directly as the incidence coalgebra of any
M\"obius category. In fact the Lawvere--Menni Hopf algebra is not
of incidence type. This gives the somewhat unsatisfactory situation that the universal object is not of the same type as the objects it is universal for.

A solution to this problem was found by
G\'alvez, Kock, and Tonks~\cite{GTK1,GTK2,GTK3}. They discovered
that the incidence coalgebra construction and M\"obius inversion
make sense for objects more general than M\"obius categories (recall
that M\"obius categories include locally finite posets and
Cartier--Foata monoids). These are completely new objects in this context which they
call {\em decomposition spaces}. They are certain simplicial objects
subject to an axiom that expresses decomposition, in the same way as
the Segal condition (which characterises categories among simplicial
sets) expresses composition. Decomposition spaces are the same thing
as the $2$-Segal spaces of Dyckerhoff and Kapranov~\cite{DK} (see
\cite{unital} for the last piece of this equivalence). It seems
likely that all combinatorial coalgebras, bialgebras and Hopf
algebras (with nonnegative section coefficients) arise from the
incidence coalgebra construction of decomposition spaces. This has
been shown for most of Schmitt's examples~\cite{schmitt_1993} (restriction species in
G\'alvez--Kock--Tonks~\cite{GKT:restr} and hereditary species in
Carlier~\cite{carlier2019hereditary}). G\'alvez--Kock--Tonks (as also
Dyckerhoff--Kapranov) work in the fully homotopical setting of
simplicial $\infty$-groupoids, but already the discrete case of the
notion is very rich, as exemplified by work of Bergner et
al.~\cite{BOORS} and Kock--Spivak~\cite{Kock-Spivak}, who relate
the notion to constructions in algebraic topology and category
theory.

G\'alvez, Kock and Tonks~\cite{GTK3} showed that the Lawvere--Menni
Hopf algebra is the incidence coalgebra of a decomposition space
$U$. With this discovery the universal property could be stated,
showing its nature as a moduli space: {\em For any decomposition
space $X$ the mapping space $\Map(X,U)$ is contractible.} This
statement is the G\'alvez--Kock--Tonks conjecture, which is the
objective of the present paper. The mapping space is the space of
culf maps, as detailed further below. Culf maps were identified to
play a key role already in the work of Lawvere and
Menni~\cite{Law}. 

Lawvere's original work (suitably upgraded to the new
context) shows that $\Map(X,U)$ is not empty: it contains $I \colon X \to
U$, which is essentially Lawvere's interval construction. G\'alvez, Kock and Tonks~\cite{GTK3} were able to establish one further
ingredient of the conjecture, namely that $\Map(X,U)$ is connected,
meaning that every map is homotopy equivalent to $I$. The finer
property of being contractible is the full homotopy uniqueness
statement, that not only is every map equivalent to $I$: it is so
uniquely (in a coherent homotopy sense).

The homotopy content was one of the reasons for G\'alvez, Kock and
Tonks to develop the whole theory in a homotopy setting:
decomposition spaces are defined to be certain simplicial
$\infty$-groupoids, and everything is fully homotopy invariant. It
is an important insight of higher category theory (see for example
Lurie~\cite{HTT}) that a universal object cannot exist in any
truncated situation. Most famous is the fact that the topos of sets
($0$-types) contains a classifier for monomorphisms ($(-1)$-types)
but cannot contain a classifier for sets ($0$-types), and that for
these to be classified one needs the $2$-topos of groupoids
($1$-types), and to classify $1$-types one needs to $3$-topos of
$2$-types, and so on. Only in the limit is it possible to find a
classifier for general homotopy types ($\infty$-groupoids) in the
$\infty$-topos of $\infty$-groupoids.

At the moment, the technical difficulties of the general
G\'alvez--Kock--Tonks conjecture are too big.

In the present paper, the first case of the conjecture is proved.
Working at the level of $1$-types, we define the simplicial groupoid
$U$ of discrete intervals (i.e.~intervals that are simplicial sets rather than simplicial spaces), and show that:

{\bf Theorem.} (Theorem~\ref{corocontractibleU}.) {\em $\Map(X,U)$ is a contractible
$1$-groupoid for every $1$-truncated locally discrete decomposition space $X$.}

This is the first substantial evidence for the full conjecture. According to the discussion above, the expected generality for $X$ is that of discrete decomposition spaces, but in fact (as kindly pointed out by the anonymous referee) the proofs work the same for a broader class of decomposition spaces, namely those $1$-truncated decomposition spaces with the property that all their intervals are discrete. Clearly, discrete decomposition spaces have this property, so the level of generality already covers all the classical theory of incidence algebras and M\"obius
inversion in combinatorics, since locally finite posets,
Cartier--Foata monoids, M\"obius categories, and Schmitt's examples
are all $0$-truncated simplicial spaces. In particular it gives
finally a firm formalisation of Lawvere's intuition that the
interval construction should be universal in some sense.  As a
particular case it establishes also the universal property of the
Ehrenborg Hopf algebra.

The idea of the proof is the following. The theorem, namely the contractibility of the $1$-groupoids $\Map(X,U)$, is a
$2$-categorical statement. The proof we give is based on
$2$-category theory. However, a direct verification of the statement
seems intractable, due to coherence problems. The difficulty is that
$U \colon \simplexcategory^{\op} \rightarrow \Grpd$ is only a pseudo-simplicial groupoid. Jardine~\cite{Jardine} has identified all the $2$-cell
structure and the $17$ coherence conditions for pseudo-simplicial groupoids. The definition of modification in this context requires
compatibility with all that. The strategy to overcome this difficulty
is to build a local strict model, a kind of neighbourhood $U_X
\subset U$ around the intervals of a given locally discrete decomposition
space $X$. The bulk of the paper is concerned with setting up this
local model and show that it is strict. To construct this, we
introduce a stricter algebraic notion of interval, where the initial
and terminal objects are not just given as properties of a discrete
decomposition space, but are carried around as data, in the notion
of chosen initial and terminal objects. This focus is inspired by
the work in another context of Batanin and Markl on operadic
categories~\cite{BM}. This is quite technical, but the
benefit is to achieve a strict local model $U_X$ which is shown to
be a strict simplicial groupoid and a complete decomposition groupoid, and to receive a strict version of
the interval construction. With this strict local model in place,
the local version of the contractibility of $\Map(X,U_X)$ can be
established with $2$-category theory by showing that $I \colon X \to U_X$,
interpreted as a natural transformation, does not admit other
self-modifications than the identity modification. In the end this
check is not so difficult.

At this point is it natural to ask whether the techniques
developed here can be applied or refined to prove the conjecture in
full generality. Unfortunately this is not very likely, or it would
require new conceptual simplifications and new technical tools. The
point is that the proof relies on explicit strictification through
strict models constructed through explicit data standing in for
universal properties (initial and terminal objects, at the level of
the objects involved, and strict simplicial objects at the higher
level). The level of $2$-categories is in practice the highest level
where this kind of technique can be applied, and already at this
level it is quite tricky to find the balance between properties and
property-like structures. The next level, which would be the
contractibility of the $2$-groupoid $\Map(X,U)$ for $X$ a
$1$-truncated decomposition space, and $U$ the universal simplicial
$2$-groupoid of $1$-intervals, seems out of reach. It seems more
promising to pass directly to the homotopical setting of the full
conjecture, aiming at using the theory of $(\infty,2)$-categories.
But it seems quite daunting to carry over the explicit
strictification strategies to that setting.

In conclusion, the present contribution may be seen as only a small
step towards the full conjecture, but it is nevertheless an
important step (and actually the first step ever carried out), and
enough to cover all the cases envisioned by Lawvere, which includes
essentially all the examples from classical combinatorics. It is also already a striking
example of how higher category theory (in this case $2$-categories)
serves to solve problems even in discrete mathematics.

\subsection*{Organisation of the paper}

We begin in Section \ref{sec:preli} with a brief review of basic notions and some results  on homotopy pullbacks of groupoids. In \ref{sec:decspace}, we recall from~\cite{GTK1} some basic notions and results of the theory of decomposition groupoids. In \ref{subsec:incidencecoalgebra} we review the notion of incidence coalgebra of a decomposition space. In \ref{subsec:culf}, we briefly explain the culf condition for a simplicial map. In \ref{subsec:decalage}, we review the notion of decalage. 

In Section \ref{subsec:slices},  we introduce some necessary material relating to the notion of slice and coslice of decomposition groupoids. Furthermore, we give the definition of interval (\ref{definition: thenicalinterval}). 

In Section \ref{sec: rigid ds}, we identify the level of generality. In principle what we need to impose is that all the intervals of $X$ are discrete, but for technical reasons we also impose some strictness. To be more precise we will work with strict simplicial groupoids such that all active-inert squares are strict pullbacks and such that $d_1$ is a discrete isofibration. (It follows that all the strict pullbacks are also homotopy pullbacks.) For short we shall call such decomposition groupoids \emph{rigid} (\ref{defi: discrete intervals dc}). Furthermore, we explain the concept of chosen initial and chosen terminal object (\ref{defi: choseterminalobject and initial}). Also, the notion of discrete algebraic interval (\ref{defi: interval}) and some results for discrete algebraic intervals are given, in particular a lifting property (\ref{existssimplecesinterval} and \ref{3existencecell}).

In Section \ref{sec:Factorisation systems}, we construct  the stretched-culf factorisation system in the category of discrete algebraic intervals. Furthermore, we introduce important working tools (\ref{lemma: culf are like mono} and  \ref{ortoghonalfactsystem}) that will be useful in next sections. 

In Section \ref{sec:U}, we define the decomposition groupoid of all discrete algebraic intervals $U$~\cite{GTK3}. In \ref{subsec:The complete decomposition groupoid $U_X$}, we construct a strict simplicial groupoid $U_X$ (\ref{proofsimplicialsetux}) that only contains the information about the discrete algebraic intervals of a fixed rigid decomposition groupoid $X$ and prove that $U_X$ is a complete decomposition groupoid (\ref{proofdspaceux} and \ref{UXcomplete}). Furthermore, we define a simplicial map $I \colon X \rightarrow U_X$ and prove that $I$ is culf (\ref{propoculfix}).  In \ref{subsec: interval cons of interval}, we explain the interval construction of and interval.  Furthermore, we compare $U_X$ with a strictification $\tildeb{U}$ of $U$ suggested by the referee in \ref{subsec:strictification}.  

In Section \ref{sec:GKTconjecture}, we come to the Gálvez--Kock--Tonks conjecture formulated in~\cite{GTK3}, and we prove a partial result (\ref{conecctedconjecture}) about the connectedness of the mapping space $\Map_{\cDcmp}(X,U)$ in the case of rigid decomposition groupoids.  In \ref{subsec:modif}, we use the concept of modification (\ref{defimodififorconj}) to prove a truncated version of the conjecture, the case of rigid decomposition groupoids. We first show that $\Map_{\cDcmp}(X,U_X)$ is contractible (\ref{teocontractiblefordesset}) and from this we deduce that the groupoid $\Map_{\cDcmp}(X,U)$ is contractible (\ref{corocontractibleU}). This is the version of the G\'alvez--Kock--Tonks conjecture that is the main theorem of this paper.

\subsection*{Acknowledgements}

The author would like to thank Joachim Kock for his advice and support all along the project. I am very thankful to the anonymous referee for a very competent and helpful report. It led to many mathematical improvements. In particular, the referee observed that with few adjustments, the theory could be upgraded from discrete decomposition spaces to locally discrete ones. The referee also suggested the elegant description of the strictification $\tildeb{U}$ in \ref{subsec:strictification}. The author was supported by grants number MTM2016-80439-P and PID2020-116481GB-I00(AEI/FEDER, UE) of Spain.

\section{Preliminaries}\label{sec:preli}

\noindent For the convenience of the reader, this section recalls a few background facts and establishes notation. These results are not new. 

Homotopy pullbacks are important to the theory of decomposition spaces. They are examples of homotopy limits, and as such are defined only up to equivalence. The most used result for homotopy pullbacks is the prism lemma:
\begin{lem}\label{lemma: prism old}
Consider a diagram
\[\begin{tikzcd}
	\cdot & \cdot & \cdot \\
	\cdot & \cdot & \cdot
	\arrow[from=1-1, to=1-2]
	\arrow[from=1-2, to=1-3]
	\arrow[from=1-1, to=2-1]
	\arrow[from=2-1, to=2-2]
	\arrow[from=2-2, to=2-3]
	\arrow[from=1-3, to=2-3]
	\arrow[from=1-2, to=2-2]
	\arrow["\lrcorner"{anchor=center, pos=0.125}, draw=none, from=1-2, to=2-3]
\end{tikzcd}\]
where the right square is a homotopy pullback. Then the left square is a homotopy pullback if and only if the outer diagram is a homotopy pullback.
\end{lem}

A particular case of homotopy pullbacks is given by the homotopy fibres. Given a map of groupoids $p \colon X \rightarrow S$ and an object $s \in S$, the \emph{homotopy fibre} $X_s$ of $p$ over $s$ is the homotopy pullback
\begin{center}
\begin{tikzcd}
X_s \drpullback \arrow[r, " " ] \arrow[d, " "]& X \arrow[d, "p"] \\
1 \arrow[r, "\ulcorner s \urcorner"']& S.
\end{tikzcd}
\end{center}

We use the following standard lemma many times.

\begin{lem}\cite{LK}\label{lemmapullbackfibres}
A square of groupoids 
\[
\begin{tikzcd}
P \drpullback \arrow[r, "u" ] \arrow[d, " "]& Y \arrow[d, ""] \\
X \arrow[r, "f"']& S
\end{tikzcd}
\]
is a homotopy pullback if and only if for each $x \in X$ the induced comparison map $u_x \colon P_x \rightarrow Y_{fx}$ is an equivalence. 
\end{lem}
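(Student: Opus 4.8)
The plan is to test the given square against the standard model of the homotopy pullback and then to reduce the statement to a fibrewise recognition criterion for equivalences of groupoids over a fixed base. Write $g\colon Y \to S$ for the right-hand vertical map and form the standard homotopy pullback $P' := X \times_S^{\mathrm{h}} Y$, whose objects are triples $(x,y,\alpha)$ with $x \in X$, $y \in Y$ and $\alpha\colon fx \xrightarrow{\sim} gy$ an isomorphism in $S$, and whose morphisms are pairs of morphisms of $X$ and $Y$ compatible with the $\alpha$'s. The commuting square induces a canonical comparison functor $c\colon P \to P'$ lying over $X$, and by definition the square is a homotopy pullback precisely when $c$ is an equivalence. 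So everything reduces to analysing $c$.

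First I would identify the fibres of $P'$. For $x \in X$, the pasting law for homotopy pullbacks applied to
\begin{center}
\begin{tikzcd}
P'_x \arrow[r] \arrow[d] & P' \arrow[r] \arrow[d] & Y \arrow[d, "g"] \\
1 \arrow[r, "x"'] & X \arrow[r, "f"'] & S
\end{tikzcd}
\end{center}
shows that the outer rectangle is again a homotopy pullback, so $P'_x \simeq 1 \times_S^{\mathrm{h}} Y = Y_{fx}$, the fibre of $g$ over $fx$. Tracing through the construction, the functor induced by $c$ on the fibres over $x$ is exactly the comparison map $u_x\colon P_x \to Y_{fx}$ of the statement, under this identification. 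Hence the lemma is equivalent to the assertion that $c$ is an equivalence if and only if $c_x$ is an equivalence for every $x \in X$; that is, to the fibrewise recognition principle for equivalences of groupoids over the base $X$.

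The forward direction is immediate: homotopy fibres are homotopy invariant, so if $c$ is an equivalence then each $c_x = u_x$ is an equivalence. The substance is the converse. Assume each $u_x$ is an equivalence. For essential surjectivity, an object $(x,y,\alpha) \in P'$ lies, up to isomorphism, in the fibre $P'_x \simeq Y_{fx}$; since $c_x$ is essentially surjective it is hit by some $p \in P_x$, and then $c(p) \cong (x,y,\alpha)$ in $P'$. For full faithfulness, fix $p,q \in P$ with images $x,x' \in X$; because $X$ is a \emph{groupoid}, every morphism $p \to q$ projects to some isomorphism $\phi\colon x \to x'$, and transporting $q$ back along $\phi$ identifies the set of morphisms $p \to q$ lying over $\phi$ with a hom-set of the fibre $P_x$, compatibly with the analogous decomposition of $P'(cp,cq)$ indexed by $X(x,x')$. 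On each summand the comparison is governed by $c_x$, which is fully faithful, so $c$ is fully faithful and therefore an equivalence.

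The main obstacle is precisely this converse, fibrewise-to-global step: one must organise the hom-sets of $P$ and $P'$ according to their projection to $X$ and verify that $c$ respects this decomposition summand by summand. This is where it is essential that the base is a groupoid, so that every downstairs morphism is invertible and the transport identifying morphisms-over-$\phi$ with fibre morphisms is available; without invertibility the reduction to the fibres would break down.
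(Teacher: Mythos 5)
The paper does not prove this lemma: it is quoted from the reference [LK] (``Homotopy theory and combinatorics of groupoids'', in preparation) as background, so there is no in-paper argument to compare yours against. On its own merits your proof is correct and is the standard one: reduce to the comparison functor $c\colon P \to P' = X\times_S^{\mathrm h}Y$ into the iso-comma model, identify $P'_x \simeq Y_{fx}$ by the pasting law, and then prove the fibrewise recognition principle for equivalences over a groupoid base. One point deserves more care than you give it: in the full-faithfulness step, ``transporting $q$ back along $\phi$'' presupposes a lift of $\phi^{-1}$ to $P$, which need not exist for a general functor of groupoids if $P_x$ means the \emph{strict} fibre. The clean fix is either to first replace $P\to X$ by an isofibration (which changes neither the homotopy fibres nor whether $c$ is an equivalence), or to work with homotopy fibres throughout: an object of the homotopy fibre $P_x$ is a pair $(p,\beta\colon \pi p\to x)$, and then $\operatorname{Hom}_{P_x}\bigl((p,\mathrm{id}_x),(q,\phi^{-1})\bigr)$ is literally the set of morphisms $p\to q$ lying over $\phi$, so no lift is needed and the summand-by-summand comparison with $c_x$ goes through exactly as you describe. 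This is also where the invertibility of $\phi$ enters, confirming your closing remark that the groupoid hypothesis on the base is essential.
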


Since homotopy pullback is defined up to equivalence, for some calculations it is important to work with a specific model. Let's look at one of the models to be used: the homotopy fibre product of a pair of functors $f \colon A \rightarrow C$ and $g \colon B \rightarrow C$ between groupoids is the groupoid $H$ whose objects are triples $(a, \theta, b)$ consisting of objects $a \in A$, $b \in B$ and an isomorphism $\theta \colon f(a) \rightarrow g(b)$ in $C$, and whose arrows $(\alpha, \beta) \colon (a, \theta, b) \rightarrow (a', \theta', b')$ consist of arrows $\alpha \colon a \rightarrow a' \in A$ and $\beta \colon b \rightarrow b' \in B$ such that $g(\beta) \circ \theta = \theta' \circ f(\alpha)$. The groupoid $H$ fits into a homotopy commutative square
\[\begin{tikzcd}
	H & A \\
	B & C,
	\arrow["{\pi_A}", from=1-1, to=1-2]
	\arrow["{\pi_B}"', from=1-1, to=2-1]
	\arrow["f", from=1-2, to=2-2]
	\arrow["g"', from=2-1, to=2-2]
	\arrow["\lrcorner"{anchor=center, pos=0.125}, draw=none, from=1-1, to=2-2]
\end{tikzcd}\]
where $\pi_A \colon H \rightarrow A$ and $\pi_B \colon H \rightarrow B$ are the canonical projections, and the components of the natural isomorphism is given by $\theta$ itself. Note that the projections always are isofibrations \cite{LK}. 

\noindent Another model is possible when one of the two legs $f$ and $g$ is an isofibration. In that case, the strict pullback is also a homotopy pullback \cite[Theorem 1]{Joyal1993}.

We will use the following variation of the prism lemma in Section \ref{subsec:slices}.
\begin{lem}\cite{LK} \label{lemma: prism general}
Consider a diagram
\[\begin{tikzcd}
	\cdot & \cdot & \cdot \\
	\cdot & \cdot & \cdot
	\arrow[from=1-1, to=1-2]
	\arrow[from=1-2, to=1-3]
	\arrow[from=1-1, to=2-1]
	\arrow[from=2-1, to=2-2]
	\arrow[from=2-2, to=2-3]
	\arrow[from=1-3, to=2-3]
	\arrow[from=1-2, to=2-2]
	\arrow["\lrcorner"{anchor=center, pos=0.125}, draw=none, from=1-2, to=2-3]
\end{tikzcd}\]
where the right square is a homotopy fibre product. Then the left square is a strict pullback if and only if the outer diagram is a homotopy fibre product.
\end{lem}

A map of groupoids $f \colon X \rightarrow Y$ is a \emph{monomorphism} when it is fully faithful. Equivalently, its homotopy fibres are ($-1$)-groupoids, that is, are either empty or contractible.

\begin{blanko} 
{Decomposition groupoids}\label{sec:decspace}
\end{blanko}


\noindent This paper is concerned with a truncated case of the G\'alvez--Kock--Tonks conjecture. For that we only have to deal with simplicial groupoids rather than simplicial spaces. For this reason we prefer to use the word decomposition groupoid rather than decomposition space  in all the paper.

The \textit{simplex category} $\simplexcategory$ is the category whose objects are the nonempty finite ordinals and whose morphisms are the monotone maps. These are generated by \emph{coface} maps $d^i \colon [n-1] \rightarrow [n]$, which are the monotone injective functions for which $i \in [n]$ is not in the image, and \emph{codegeneracy} maps $s^i \colon [n+1] \rightarrow [n]$, which are monotone surjective functions for which $i \in [n]$ has a double preimage. We write $d^\bot := d^0$ and $d^\top := d^n$ for the outer coface maps. 

An arrow of  $\simplexcategory$  is termed \emph{active},
and written $g \colon [m] \actto [n]$, if it preserves end-points, $g(0)$ = 0 and $g(m) = n$. An arrow
is termed \emph{inert}, and written $f \colon [m] \rightarrowtail [n]$, if it is distance preserving,
$f(i+1) = f(i)+1$ for $0 \leq i < m$.

\begin{defi}\cite[Definition 3.1]{GTK1}\label{definitiondecompositionspace}
A \emph{decomposition groupoid} is a simplicial groupoid 
$$X \colon \simplexcategory^{\op} \rightarrow \Grpd$$
such that the image of any pushout diagram in $\simplexcategory$ of an active map $g$ along
an inert map $f$ is a homotopy pullback of groupoids,

$$ X\!\left(\!\! \vcenter{\xymatrix{
   [p]  \drpullbacko{}  &  [m]\ar@{->|}[l]_{g'}  \\
   [q]\ar@{>->}[u]^{f'}   & \ar@{->|}[l]^{g} [n] \ar@{>->}[u]_{f}
  }}\right) \qquad=\qquad \vcenter{\xymatrix{
   X_{p}\ar[d]_{{f'}^*} \ar[r]^{{g'}^*} \drpullbacko{}  & X_m\ar[d]^{{f}^*}  \\
   X_q\ar[r]_{{g}^*}   &  X_n .
  }}
  $$

This is equivalent \cite[Proposition 3.5]{GTK1} to requiring that the following diagrams are homotopy pullbacks for all $0 < i < n$:

\begin{center}
\begin{tikzcd}
X_{n+1} \drpullback \arrow[r, "d_{i+1}"]\arrow[d, "d_\bot"']& X_n \arrow[d, "d_\bot"] \\
X_n \arrow[r, "d_i"']& X_{n-1}
\end{tikzcd} \; \; \;
\begin{tikzcd}
X_{n+1} \drpullback \arrow[r, "d_{i}"]\arrow[d, "d_\top"']& X_n \arrow[d, "d_\top"] \\
X_n \arrow[r, "d_i"']& X_{n-1}.
\end{tikzcd}
\end{center}
\end{defi}
\begin{defi}\cite[\S 2.9]{GTK1}\label{definitionsegal}
A simplicial groupoid $X \colon \simplexcategory^{\op} \rightarrow \Grpd$ is called a  \emph{Segal groupoid} if it satisfies the Segal condition,   
\begin{center}
\begin{tikzcd}
X_n \arrow[r, "\simeq"] & X_1 \times_{X_0} \cdots \times_{X_0} X_1  \mbox{ for all } n \geq 0.
\end{tikzcd}
\end{center}
This is equivalent \cite[Lemma 2.10]{GTK1} to requiring that for each $n > 0$  the following diagram is a homotopy pullback
\begin{center}
\begin{tikzcd}
X_{n+1} \drpullback \arrow[r, "d_\top"]\arrow[d, "d_\bot"']& X_n \arrow[d, "d_\bot"] \\
X_n \arrow[r, "d_\top"']& X_{n-1}.
\end{tikzcd}
\end{center}
\end{defi}

\begin{propo}\cite[Proposition 3.7]{GTK1} \label{segaldecomp}
 Any Segal groupoid is a decomposition groupoid.
\end{propo}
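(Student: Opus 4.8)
The plan is to verify directly the two families of squares that Definition~\ref{definitiondecompositionspace} offers as an equivalent reformulation of the decomposition axiom: for every $n \geq 2$ and every inner index $0 < i < n$, both the $d_\bot$-square
\begin{center}
\begin{tikzcd}
X_{n+1} \drpullback \arrow[r, "d_{i+1}"] \arrow[d, "d_\bot"']& X_n \arrow[d, "d_\bot"] \\
X_n \arrow[r, "d_i"']& X_{n-1}
\end{tikzcd}
\end{center}
and its $d_\top$-analogue are homotopy pullbacks. The reduction to these generators is already granted by the definition, so the entire task is to deduce that these specific squares are pullbacks from the Segal squares of Definition~\ref{definitionsegal}.

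I would attack each square with Lemma~\ref{lemmapullbackfibres}, which reduces the $d_\bot$-square to checking that for every $x \in X_n$ (the lower-left corner) the comparison map on vertical fibres $(X_{n+1})_x \to (X_n)_{d_i x}$ induced by $d_{i+1}$ is an equivalence, the fibres being taken over the two copies of $d_\bot$. To identify these fibres I would regroup the spine decomposition of Definition~\ref{definitionsegal}, $X_k \simeq X_1 \times_{X_0} \cdots \times_{X_0} X_1$, as $X_k \simeq X_1 \times_{X_0} X_{k-1}$, the first factor recording the initial arrow and the projection onto $X_{k-1}$ being exactly $d_\bot$. In a pullback the fibres of a projection are the fibres of the opposite leg, so the fibre of $d_\bot : X_k \to X_{k-1}$ over a point $y$ is equivalent to the fibre of the target map $d_\bot : X_1 \to X_0$ over the initial object of $y$, i.e. the groupoid of arrows with that target.

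With this description the argument closes: the fibre of the left-hand $d_\bot$ over $x$ is the groupoid of arrows into the initial object of $x$, and the fibre of the right-hand $d_\bot$ over $d_i x$ is the groupoid of arrows into the initial object of $d_i x$. Since $0 < i < n$ the inner face $d_i$ fixes the initial object, so the two fibres coincide, and the comparison map induced by $d_{i+1}$ --- which only recomposes arrows in positions $\geq 2$ and leaves the prepended initial arrow alone --- is the identity, hence an equivalence. Lemma~\ref{lemmapullbackfibres} then yields that the $d_\bot$-square is a pullback. The $d_\top$-square is settled by the mirror argument, regrouping the spine as $X_k \simeq X_{k-1} \times_{X_0} X_1$, identifying the fibre of $d_\top$ with a fibre of the source map $d_\top : X_1 \to X_0$ over the terminal object, and using that inner faces fix the terminal object.

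The step I expect to be delicate is not the combinatorics of vertices but the naturality needed to see the induced comparison map as an honest equivalence: one must check that the Segal equivalence $X_{n+1} \simeq X_1 \times_{X_0} X_n$ intertwines the outer face $d_\bot$ on the total space with the identity on the $X_1$-factor and with $d_i$ on the $X_n$-factor, and one must consistently read all fibres as homotopy fibres, invoking the remark after Lemma~\ref{lemmapullbackfibres} to pass between strict and homotopy pullbacks wherever the maps in play are isofibrations. The remaining content is the routine bookkeeping of which objects the inner and outer faces move.
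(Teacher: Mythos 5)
The paper does not actually prove this proposition: it is imported verbatim from \cite{GTK1} (Proposition 3.7), so there is no internal proof to compare against. Your argument is correct, and it is essentially the fibrewise incarnation of the standard proof. The argument in \cite{GTK1} pastes pullback squares: one composes the square in question horizontally with the (iterated) Segal square
\begin{center}
\begin{tikzcd}
X_n \arrow[r, "(d_\top)^{n-1}"] \arrow[d, "d_\bot"'] & X_1 \arrow[d, "d_\bot"] \\
X_{n-1} \arrow[r, "(d_\top)^{n-2}"'] & X_0
\end{tikzcd}
\end{center}
and observes that both this square and the outer rectangle are pullbacks by the Segal condition, whence the left square is a pullback by the two-out-of-three property of pullback pasting; the identification of the outer rectangle with a Segal rectangle is exactly the simplicial identity $(d_\top)^{n-1}\,d_{i+1} = (d_\top)^{n}$, valid because $i+1 \geq 2$. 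Your route via Lemma~\ref{lemmapullbackfibres} computes the same thing at the level of fibres, and the ``delicate naturality'' you flag at the end is precisely this simplicial identity (and its mirror $(d_\bot)^{n-1}\,d_i = (d_\bot)^{n}$ for $i \leq n-1$ in the $d_\top$-case), so it is discharged by pure bookkeeping in $\simplexcategory$. The pasting formulation buys you a proof with no explicit fibre manipulation and no appeal to the isofibration remark; your formulation buys a more concrete picture (fibres of $d_\bot$ as ``arrows into the initial vertex'', untouched by inner faces), which is the intuition behind the pasting anyway. One small caution: make sure you read the fibre of $d_\bot\colon X_1 \to X_0$ as the fibre of the \emph{target} map (since $d_0$ deletes vertex $0$), as you implicitly do; with that convention everything you wrote goes through.
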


\begin{exa}\label{exatrees}
The decomposition groupoid of rooted trees \textbf{RT} is defined as follows \cite{GTK1}. Recall that a \emph{forest} is a disjoint union of rooted trees.  An \emph{admissible cut} of a rooted tree is a splitting of the set of nodes into two subsets such that the second forms a subtree containing the root node or is the empty forest. $\textbf{RT}_1$ denotes the groupoid of isoclasses of forests, and $\textbf{RT}_2$ denotes the groupoid of isoclasses of forests with an admissible cut. More generally, $\textbf{RT}_0$ is defined to be a point, and $\textbf{RT}_k$ is the groupoid of isoclasses of forests with $k - 1$ compatible admissible cuts. These form a simplicial  groupoid in which the inner face maps forget a cut, and the outer face maps project away stuff: $d_\perp$ deletes the crown and $d_\top$ deletes the bottom layer. It is readily  seen that $\textbf{RT}$ is not a Segal groupoid: a tree with a cut cannot be reconstructed from its crown and its bottom tree, which is to say that $\textbf{RT}_2$ is not equivalent to $\textbf{RT}_1 \times_{\textbf{RT}_0} \textbf{RT}_1$. It is straightforward to check that it is a decomposition groupoid~\cite{GTK1}.
\end{exa}

\noindent Recall that a simplicial map $F \colon X \rightarrow Y$ is \emph{cartesian} on an arrow $[n] \rightarrow [k]$ in $\simplexcategory$, if the naturality square for $F$ with respect to this arrow is a homotopy pullback. A simplicial map $F \colon X \rightarrow Y$ is called a \emph{right fibration} if it is cartesian on all bottom coface maps $d_{\perp}$. Similarly, $F$ is called a \emph{left fibration} if it is cartesian on $d_{\top}$.

\begin{lem}  \label{propobesegalright}
Let $Y$ be a Segal groupoid and let $F \colon X \rightarrow Y$ be a simplicial map that is a left or a right fibration, then also $X$ is a Segal groupoid.
\end{lem}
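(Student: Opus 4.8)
The plan is to prove that if $F\colon X \to Y$ is a left or right fibration onto a Segal groupoid $Y$, then $X$ is itself Segal. By symmetry I would treat the right-fibration case explicitly (cartesian on all $d_\perp$) and note that the left-fibration case follows by the dual argument using $d_\top$. The goal, by Definition~\ref{definitionsegal}, is to show that for each $n>0$ the square
\begin{center}
\begin{tikzcd}
X_{n+1} \arrow[r, "d_\top"]\arrow[d, "d_\bot"']& X_n \arrow[d, "d_\bot"] \\
X_n \arrow[r, "d_\top"']& X_{n-1}
\end{tikzcd}
\end{center}
is a homotopy pullback.

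First I would set up the comparison with the corresponding square for $Y$, which is a pullback since $Y$ is Segal. The strategy is to use Lemma~\ref{lemmapullbackfibres}: a square is a homotopy pullback iff the induced maps on fibres over each point of the bottom-left corner are equivalences. So I would fix $x \in X_n$ and compare the fibre of $d_\top\colon X_{n+1}\to X_n$ over $x$ with the fibre of $d_\bot\colon X_n \to X_{n-1}$ over $d_\top(x)$. The assumption that $F$ is a right fibration means each bottom face map $d_\bot$ in $X$ is cartesian over the corresponding one in $Y$, i.e. the naturality squares for $d_\bot$ relating $X$ and $Y$ are pullbacks. The plan is to paste these cartesian $d_\bot$-squares with the known Segal pullback square in $Y$ and use the pasting law for homotopy pullbacks to deduce that the $X$-square above is a pullback.

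Concretely, I would build a cube whose top face is the Segal square for $X$ (to be shown cartesian), whose bottom face is the Segal square for $Y$ (cartesian by hypothesis on $Y$), and whose four vertical faces are the naturality squares of $F$ at the relevant face maps. Two of these vertical faces are $d_\bot$-naturality squares, hence cartesian because $F$ is a right fibration. By the pasting lemma for pullbacks, the composite of the bottom face with the two cartesian vertical faces is a pullback; comparing this with the top face via Lemma~\ref{lemmapullbackfibres} forces the top face to be a pullback as well. I expect the main obstacle to be the bookkeeping of which vertical faces are automatically cartesian: only the $d_\bot$-faces come for free from the right-fibration hypothesis, so I must arrange the cube so that exactly those faces carry the pullback weight, and verify the simplicial identities ($d_\bot d_\top = d_\top d_\bot$ suitably indexed) make the cube commute. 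Once the cube is set up correctly, the conclusion follows formally from pasting, and the left-fibration case is verbatim dual.
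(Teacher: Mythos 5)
Your argument is correct: pasting the $d_\bot$-naturality squares (cartesian by the right-fibration hypothesis) with the Segal square of $Y$ and then cancelling against the remaining $d_\bot$-naturality square forces the Segal square of $X$ to be a pullback, and the left-fibration case is the transposed dual. The paper states this lemma without proof (it is recalled background), and what you describe is exactly the standard pasting/cancellation argument that is intended.
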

 
Certain pullbacks in $\simplexcategory^{\op}$ are preserved by general decomposition groupoids, which is the content of the following result. 

\begin{lem}\cite[Lemma 3.10]{GTK1} \label{lemma310}
Let $X$ be a decomposition groupoid. For all $0<i<j<n$, the following squares of active face and degeneracy maps are homotopy pullbacks

\begin{center}
\begin{tikzcd}
X_{n+1} \drpullback \arrow[r, "d_{i}"]\arrow[d, "d_{j+1}"']& X_n \arrow[d, "d_j"] \\
X_n \arrow[r, "d_i"']& X_{n-1}
\end{tikzcd} \; \; \;
\begin{tikzcd}
X_{n-3} \drpullback \arrow[r, "s_{i-1}"]\arrow[d, "s_{j-2}"']& X_{n-2} \arrow[d, "s_{j-1}"] \\
X_{n-2} \arrow[r, "s_{i-1}"']& X_{n-1}.
\end{tikzcd}
\end{center}
\end{lem}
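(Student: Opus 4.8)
The plan is to reduce both squares to the basic pullbacks of Definition~\ref{definitiondecompositionspace} by means of the standard pasting lemma for (homotopy) pullbacks of groupoids, because neither square is directly of the form covered by the decomposition axiom. Each square commutes by the relevant (co)simplicial identities — $d_i d_{j+1}=d_j d_i$ for the faces and $s_{i-1}s_{j-2}=s_{j-1}s_{i-1}$ for the degeneracies — and is in fact the image under $X$ of a pushout in $\simplexcategory$, namely a \emph{disjoint insertion} of the faces $d^i,d^j$ and a \emph{disjoint contraction} of the degeneracies $s^{i-1},s^{j-1}$. Since $0<i<j<n$, all four maps in each square are inner, hence active; as the decomposition axiom only guarantees that active-along-inert pushouts are sent to pullbacks, these active-along-active squares are not covered and must be obtained indirectly.

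For the face square I would argue by the pasting lemma together with an induction on $n$. The point is that this square is \emph{primitive}: every leg is a single face map (the sides $d_{j+1},d_j$ are never outer since $j<n$, and the top and bottom $d_i$ are never $d_\bot$ since $i>0$), so it is literally none of the basic squares and cannot be factored within a fixed dimension. Instead one pastes it with the basic squares of Definition~\ref{definitiondecompositionspace} and with their transposes — recall that a square of groupoids is a pullback iff its transpose is — in order to peel off an outer face and relate it to an active-along-inert square one dimension lower. Here the simplicial identities that push inner faces through an outer face, such as $d_\top\, d_{j+1}=d_{j+1}\, d_\top$ (whose image is a pullback, being the transpose of a basic square), are what allow the connecting squares to be of active-along-inert type. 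Applying the two-out-of-three form of the pasting lemma (right square and outer rectangle pullbacks $\Rightarrow$ left square pullback) at each stage then propagates the pullback property down to the basic pullbacks guaranteed by the axiom.

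For the degeneracy square I would instead use the fibre criterion, Lemma~\ref{lemmapullbackfibres}, exploiting that every degeneracy is a split monomorphism and hence has $(-1)$-truncated fibres. By Lemma~\ref{lemmapullbackfibres} it suffices to show, for each $x\in X_{n-2}$, that the fibre of $s_{j-2}$ over $x$ is equivalent to the fibre of $s_{j-1}$ over $s_{i-1}x$; as both are empty-or-contractible, I only need them inhabited simultaneously, i.e.\ that $x$ lies in the image of $s_{j-2}$ iff $s_{i-1}x$ lies in the image of $s_{j-1}$. The forward implication is immediate from $s_{i-1}s_{j-2}=s_{j-1}s_{i-1}$; for the converse, applying the retraction $d_{i-1}$ (so that $d_{i-1}s_{i-1}=\identity$) to a witness $s_{i-1}x\simeq s_{j-1}w$ and using $d_{i-1}s_{j-1}=s_{j-2}d_{i-1}$ exhibits $x\simeq s_{j-2}(d_{i-1}w)$.

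The main obstacle is the face square. The degeneracy case is a formal consequence of the split-mono structure and the simplicial identities, but for the faces the difficulty is to organise the induction so that \emph{every} elementary square produced in the pasting is genuinely active-along-inert, so that the decomposition axiom applies, all while keeping the simplicial-identity bookkeeping straight as the indices are shuffled toward the outer faces. This combinatorial reduction is finite but fiddly, and it is where the real work lies.
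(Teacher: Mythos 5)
The paper gives no proof of this lemma at all: it is imported verbatim from \cite{GTK1}, so your proposal can only be compared with the known argument there, which handles \emph{both} squares by pasting with the generating active--inert pullbacks. For the face square your sketch is essentially that argument and the strategy is sound: the square is indeed primitive in the sense you describe, and pasting it on the right with the (transposed) basic square $d_{j-1}d_\bot=d_\bot d_j$, then re-splitting the resulting outer rectangle using $d_\bot d_i=d_{i-1}d_\bot$ into the basic square $d_j d_\bot = d_\bot d_{j+1}$ followed by the square for $(n-1,i-1,j-1)$, gives a clean descending induction that terminates, when $i$ reaches $0$, at a transposed instance of Definition~\ref{definitiondecompositionspace}. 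You leave this bookkeeping implicit, but it does close up.

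The degeneracy square is where there is a genuine gap. Your key claim --- that a degeneracy, being a split monomorphism, has $(-1)$-truncated fibres --- is false in $\Grpd$: the map $1\to BG$ is split by the unique map $BG\to 1$, yet its homotopy fibre is $G$. This failure is not exotic here; it is precisely what the completeness condition excludes, and the lemma is stated for arbitrary decomposition groupoids. Concretely, for an abelian group $G$ the bar construction $X_n=B(G^n)$ is a Segal, hence decomposition, groupoid in which the fibre of $s_0:X_0\to X_1$ over $e$ and the fibre of $s_1:X_1\to X_2$ over $s_0(e)$ are both the discrete set $G$; the square is still a homotopy pullback, but only because the induced map between these non-contractible fibres is an equivalence --- something your ``simultaneously inhabited'' criterion cannot see. (Your retraction computation with $d_{i-1}$ is fine as far as it goes, so your argument does prove the degeneracy square for \emph{discrete} $X$, which is admittedly the only case this paper uses.) For the general statement the degeneracy square must be treated like the face square, by pasting with the degeneracy--outer-face pullbacks, i.e.\ the unital conditions of Lemmas~\ref{unitallemafor2segalupper} and~\ref{unitallemafor2segallower}, which are themselves instances of the active--inert axiom.
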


 A decomposition groupoid $X$ is \emph{complete} when $s_0 \colon X_0 \rightarrow X_1$ is a monomorphism (i.e.~is ($-1$)-truncated). It follows from the decomposition groupoid axiom that in this case all degeneracy maps are monomorphisms \cite[Lemma 2.5]{GTK2}.

\begin{blanko} 
{The incidence coalgebra of a decomposition groupoid} \label{subsec:incidencecoalgebra}
\end{blanko}

\noindent The span 
\begin{center}
\begin{tikzcd} [cramped, sep=small]
X_1 & X_2 \arrow[l, "d_1"'] \arrow[rr, "(d_2 \, d_0)"]& & X_1 \times_{X_0} X_1
\end{tikzcd} 
\end{center}
\noindent defines a linear functor, the comultiplication
$$\Delta \colon  \Grpd_{/X_1} \rightarrow \Grpd_{/X_1 \times_{X_0} X_1}$$
$$\; \; \; \; \;  \; \;  \; \;  f \mapsto (d_2, d_0)_! \circ  d_1^* (f).$$
 Likewise, the span
\begin{center}
\begin{tikzcd} [cramped, sep=small]
X_1 & X_0\arrow[l, "s_0"'] \arrow[r, "t"]& 1
\end{tikzcd}
\end{center}
 defines a linear functor, the counit
$$\delta \colon  \Grpd_{/X_1} \rightarrow \Grpd$$
$$\; \; \;\ \; \; \; \;  \; \;  \; \;  f  \mapsto t_! \circ  s_0^* (f).$$
The decomposition groupoid axioms serve to ensure that $\Delta$ is coassociative with counit $\delta$, up to coherent homotopy \cite[\S 5.3]{GTK1}. This coalgebra $(\Grpd_{/X_1} , \Delta, \delta)$  is called the \emph{incidence coalgebra}.  The classical notion of incidence coalgebras in vector spaces is obtained by taking homotopy cardinality; see \cite{GTK2}. A monoidal structure on $X$ gives furthermore a bialgebra structure. This is not needed in this paper, except that it will be mentioned in some examples.

\begin{exa}
The Butcher--Connes--Kreimer Hopf algebra of rooted trees is the free commutative algebra on the set of isomorphism classes of rooted trees, with comultiplication defined by summing over certain admissible cuts $c$:
 $$\Delta(T) = \sum_{c \in \mbox{\scriptsize{admi.cuts}}(T)} P_c \otimes R_c.$$
Recall that an admissible cut $c$ is a splitting of the set of nodes into two subsets, such that the second forms a subtree $R_c$ containing the root node (or is the empty forest); the first subset, the complement crown, then forms a subforest $P_c$. The Butcher--Connes--Kreimer Hopf algebra is in fact the incidence bialgebra of the decomposition groupoid of rooted trees of Example \ref{exatrees} \cite{GTK1}.  
\end{exa}

\begin{exa}\cite[\S 5.1]{GTK1}
If $X$ is the nerve of a category (for example, a poset) then $X_2$ is the set of all composable pairs of arrows. The comultiplication is then defined by:
$$\Delta(f) = \sum_{b \circ a = f} a \otimes b,$$
and the counit sends identity arrows to 1 and other arrows to 0.

\end{exa}
\begin{blanko}
{Culf maps} \label{subsec:culf}
\end{blanko}

\begin{defi}\cite[\S 4]{GTK1}
A simplicial map $F \colon X \rightarrow Y$ is called \emph{culf} if $F$ is cartesian on each active map.
\end{defi}

Culf stands for `conservative' and `unique lifting factorisations' where \emph{conservative} means cartesian on all codegeneracy maps, and \emph{unique lifting factorisations} means cartesian on all coface maps. The culf condition can be seen as  an abstraction of coalgebra homomorphism: the conservative condition corresponds to counit preservation, and ulf corresponds to comultiplicativity.

\begin{propo}\cite[Lemma 4.3]{GTK1} \label{culfcondition}
A simplicial map between decomposition groupoids is culf if and only if it is cartesian on $d^1 \colon [1] \rightarrow [2]$.
\end{propo}


\begin{blanko}
{Decalage}\label{decalage}\label{subsec:decalage}
\end{blanko}
\noindent Given a simplicial groupoid $X$, the \emph{lower dec} $\Dec_{\perp} X$ is a new simplicial groupoid obtained by deleting $X_0$ and shifting everything one place down, deleting also all $d_0$ face maps and all $s_0$ degeneracy maps. It comes equipped with a simplicial map, called the lower dec map, $d_\perp \colon \Dec_{\perp} X \rightarrow X$ given by the original $d_0$. 
 Similarly, the \emph{upper dec} $\Dec_{\top} X$ is obtained by instead deleting, in each degree, the top face map $d_\top$ and the top degeneracy map $s_\top$. The deleted top face maps becomes the upper dec map $d_\top \colon \Dec_{\top} X \rightarrow X$.  
\begin{propo}\cite[Proposition 4.9]{GTK1} \label{decareculfs}
If $X$ is a decomposition groupoid then the dec maps $d_\top \colon \Dec_\top X \rightarrow X$ and $d_\perp \colon \Dec_\perp X \rightarrow X$ are culf.
\end{propo}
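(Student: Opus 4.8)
The plan is to reduce the CULF condition, which a priori demands cartesianness on every active map, to a single pullback check by invoking Proposition~\ref{culfcondition}. That proposition applies to a simplicial map \emph{between decomposition groupoids}, so the first task is to observe that the source of each dec map is again a decomposition groupoid. Concretely, $(\Dec_\perp X)_n = X_{n+1}$, with $d_i = d_{i+1}$ and $s_i = s_{i+1}$ in $\Dec_\perp X$, and the dec map $d_\perp$ is the original $d_0$ in each degree; the upper dec is described symmetrically, with the face maps of $\Dec_\top X$ the original $d_i$ and its dec map the original top face.

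To see that $\Dec_\perp X$ is a decomposition groupoid, I would unwind the two families of defining squares from Definition~\ref{definitiondecompositionspace}. The $d_\top$-family, rewritten in terms of $X$, becomes the square with horizontal inner faces $d_{i+1}$ and vertical top faces $d_{n+2}, d_{n+1}$; this is literally the second family of the decomposition axiom for $X$ at the shifted index $N = n+1$, hence a pullback. The $d_\bot$-family becomes the square with vertical maps $d_1$ (an inner face of $X$) and horizontal faces $d_{i+2}, d_{i+1}$; transposing it (a square is a pullback if and only if its transpose is) produces exactly an instance of the active-active square of Lemma~\ref{lemma310}, the hypotheses $0 < 1 < i+1 < n+1$ and $n+1 \geq 3$ matching the range $0 < i < n$, $n \geq 2$ required for the dec. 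The upper dec is handled by the evident mirror (op) symmetry interchanging $\Dec_\perp$ and $\Dec_\top$.

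With $\Dec_\perp X$, $\Dec_\top X$ and $X$ all decomposition groupoids, Proposition~\ref{culfcondition} reduces the claim to cartesianness on $d^1 : [1] \to [2]$, i.e.\ to a single naturality square. For $d_\perp$ this square has $X_3 \xrightarrow{d_2} X_2$ on top, the dec map $d_0$ on both verticals, and $X_2 \xrightarrow{d_1} X_1$ on the bottom; this is precisely the $n = 2$, $i = 1$ instance of the first family in Definition~\ref{definitiondecompositionspace}, so it is a pullback. For $d_\top$ the square has $X_3 \xrightarrow{d_1} X_2$ on top, the top faces $d_3, d_2$ on the verticals, and $X_2 \xrightarrow{d_1} X_1$ on the bottom, which is the $n = 2$, $i = 1$ instance of the second family; again a pullback. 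Hence both dec maps are CULF.

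The routine inner-face cartesianness is immediate from the axiom, so the real content --- and the main obstacle --- is the first step, namely checking that the dec is a decomposition groupoid. The delicate point there is the index bookkeeping: matching the shifted face indices to the correct family of the decomposition axiom versus Lemma~\ref{lemma310}, noticing that the $d_\bot$-family only becomes a recognizable pullback after transposition, and verifying that the numerical ranges (in particular the small cases $n = 2$) still fall within the hypotheses of Lemma~\ref{lemma310}. Once this is in place the final $d^1$-check is immediate, and the same reasoning handles $\Dec_\top$ by symmetry.
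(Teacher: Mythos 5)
Your argument is correct. Note first that the paper itself gives no proof of this proposition --- it is imported verbatim from \cite{GTK1} (Proposition 4.9) --- so there is no in-paper argument to compare against; what you have written is a self-contained derivation from the other results the paper does state. The index bookkeeping all checks out: the $d_\top$-family for $\Dec_\perp X$ is indeed the second family of Definition~\ref{definitiondecompositionspace} at level $n+1$, the transposed $d_\bot$-family is Lemma~\ref{lemma310} with $(i,j,n)$ there equal to $(1,i+1,n+1)$, and the two final $d^1$-squares are exactly the $n=2$, $i=1$ instances of the two families of the decomposition axiom. The one place where you work harder than necessary is the first step: that $\Dec_\perp X$ and $\Dec_\top X$ are decomposition groupoids follows in one line from Theorem~\ref{theoremdecalage} (the decs of a decomposition groupoid are Segal) together with Proposition~\ref{segaldecomp} (Segal implies decomposition), with no index-chasing at all. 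Your direct verification buys nothing extra here except independence from Theorem~\ref{theoremdecalage}, which the paper already assumes. A minor caveat worth keeping in mind: Proposition~\ref{culfcondition} reduces CULF to the single $d^1$-square only for maps \emph{between decomposition groupoids}, so the reduction genuinely does require the first step in some form --- you were right not to skip it.
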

 The decomposition property can be characterised in terms of decalage:
 
\begin{teo}\cite{DK, unital,GTK1}\label{theoremdecalage}
For a simplicial groupoid $X \colon \simplexcategory^{\op} \rightarrow \Grpd$, the following are equivalent
\begin{enumerate}
\item X is a decomposition groupoid
\item both $\Dec_{\perp} X$ and $\Dec_{\top} X$ are Segal groupoids.
\end{enumerate}
\end{teo}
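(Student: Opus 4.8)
The plan is to prove Theorem~\ref{theoremdecalage} by reducing the decomposition groupoid axiom, expressed via the two families of pullback squares in Definition~\ref{definitiondecompositionspace}, to Segal conditions on the two decalages, and conversely. The key observation is bookkeeping: under the index shift defining $\Dec_\perp X$, one has $(\Dec_\perp X)_n = X_{n+1}$, the bottom face map $d_\perp$ of $X$ is suppressed, and the remaining face maps $d_0, \dots, d_n$ of $(\Dec_\perp X)_n$ are the maps $d_1, \dots, d_{n+1}$ of $X$; in particular the new top face map $d_\top$ of $(\Dec_\perp X)_n = X_{n+1}$ is the old $d_{n+1}$, while the new bottom face map $d_\perp$ is the old $d_1$. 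I would first write out this dictionary carefully so that the Segal pullback squares of Definition~\ref{definitionsegal} applied to $\Dec_\perp X$ translate literally into squares in $X$.

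With the dictionary in place, I would show directly that the Segal square for $\Dec_\perp X$ in degree $n$, namely the pullback of its $d_\top$ against its $d_\perp$, becomes exactly the left-hand pullback square of Definition~\ref{definitiondecompositionspace} for $X$ in the case $i=1$ (the outermost inner coface). Symmetrically, the Segal squares for $\Dec_\top X$ produce the right-hand family with $i = n-1$. So from $(2)$ one extracts immediately the extreme cases of the decomposition axiom. The remaining inner indices $1 < i < n$ should then be obtained by iterating: applying $\Dec_\perp$ repeatedly shifts which inner face map plays the role of the bottom-most one, so a single decalage being Segal in every degree already encodes a whole band of indices, and combining $\Dec_\perp$ and $\Dec_\top$ covers all $0<i<n$. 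Here I would lean on the pasting lemma for pullbacks (Lemma~\ref{lemmapullbackfibres} gives the fibrewise criterion that makes pasting clean) to assemble the general square from the extreme ones, noting that Lemma~\ref{lemma310} guarantees the auxiliary squares of active maps one needs along the way are themselves pullbacks.

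For the converse, assuming $X$ is a decomposition groupoid, I would verify the Segal condition for $\Dec_\perp X$ and $\Dec_\top X$ by running the same translation backwards: the Segal square for each decalage is one of the decomposition pullback squares of $X$, now available by hypothesis. A cleaner route I would also consider is to invoke Proposition~\ref{decareculfs} together with Lemma~\ref{propobesegalright}: the dec map $d_\perp \colon \Dec_\perp X \to X$ is a right fibration onto $X$, but this does not by itself give the Segal condition, so the self-contained square-by-square check is more reliable and is what I would write up.

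The main obstacle will be the combinatorics of the indexing under iterated decalage --- making precise how the Segal squares in \emph{all} degrees of a single decalage conspire to yield the decomposition squares for \emph{all} inner indices $i$, rather than just the extreme ones, and ensuring the pasting of pullbacks is valid at each step. Getting the direction conventions right (which face becomes $d_\top$ versus $d_\perp$ after the shift, and the off-by-one in the degree) is exactly the kind of detail where sign-type errors creep in, so I would fix notation once and check the smallest nontrivial case, $n=2$, by hand before stating the general argument.
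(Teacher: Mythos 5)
The paper does not prove this statement: it is quoted verbatim from the literature (\cite{DK}, \cite{unital}, and \cite{GTK1}, Theorem~4.10), so there is no internal proof to compare against. Judged on its own terms, your strategy is the standard one, and the direction $(1)\Rightarrow(2)$ is essentially correct: the Segal square of each decalage is literally one extreme-index instance of the decomposition squares. However, you have the correspondence backwards. Computing the shifts, the Segal square of $\Dec_\bot X$ (whose bottom face is the old $d_1$ and whose top face is the old $d_\top$) is the transpose of the \emph{right-hand} family of Definition~\ref{definitiondecompositionspace} with $i=1$, while the Segal square of $\Dec_\top X$ is the \emph{left-hand} family with $i=n-1$; this matches the paper's own Definition~\ref{defiupperlower2}, which pairs $\Dec_\top$ with the left-hand ($d_\bot$) squares and $\Dec_\bot$ with the right-hand ($d_\top$) squares. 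This is exactly the sign-type slip you warned yourself about, and since the theorem is symmetric in the two decalages it does not affect the statement, but it would have to be fixed in any write-up.

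The genuine gap is in $(2)\Rightarrow(1)$, which is the substantive direction. You need: $\Dec_\bot X$ Segal implies \emph{all} the $d_i/d_\top$ squares for $0<i<n$ are pullbacks (and dually), not just the extreme index. Your proposed mechanism, iterating $\Dec_\bot$, does produce the other indices, but only if you already know that the decalage of a Segal groupoid is again Segal; that auxiliary fact is not free, and proving it amounts to exactly the kind of 2-Segal square you are trying to establish. The clean route is a pasting argument internal to the Segal groupoid $Y=\Dec_\bot X$: first derive the ``long'' Segal pullbacks $Y_{n}\simeq Y_{n-1}\times_{Y_0}Y_1$ from the basic ones, then paste the square $(d_j,d_\top)$ (for $j\le n-1$) against the long Segal square and use the two-out-of-three property of pullbacks. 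You gesture at pasting but invoke the wrong tools: Lemma~\ref{lemmapullbackfibres} is the fibrewise criterion, not the pasting lemma, and Lemma~\ref{lemma310} is a statement \emph{about} decomposition groupoids, so appealing to it while trying to prove that $X$ \emph{is} a decomposition groupoid is circular. Until the derivation of the full band of inner indices from the single Segal family is made explicit (via the pasting argument or via a properly justified ``dec of Segal is Segal'' lemma), the proof is incomplete at its central step.
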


Throughout we write $\delta A \colon \simplexcategory^{\op} \rightarrow \Grpd$ for the constant simplicial groupoid on a groupoid $A$. We have a natural transformation $\pil \colon \Dec_\top X \rightarrow \delta (X_0)$ defined as follows: the map $\pil \colon \Dec_\top X \rightarrow \delta (X_0)$ sends an $n$-simplex $\lambda$ in $\Dec_\top X$ to $d_\perp^{n+1}(\lambda)$ in $X_0$ and an arrow $\alpha \colon \lambda \rightarrow \eta$ in $(\Dec_\top X)_n$ to $d_\perp^{n+1}(\alpha)$ in $X_0$. 
We denote by ${\simplexcategory^{t}}$ the category whose objects are finite linear orders with a top element, and whose arrows are the maps that preserve the order and the top element. Since $[0]$ is terminal in $({\simplexcategory^{t}})^{\op}$, the map $\pil$ is a simplicial map. 

\begin{lem}\label{lemma: pi top is cartesian}
The natural transformation $\pil$ is cartesian on right fibrations. That is, given a right fibration $p \colon X \rightarrow Y$, the square
\[\begin{tikzcd}
	{\Dec_\top X} & {\delta (X_0)} \\
	{\Dec_\top Y} & {\delta (Y_0)}
	\arrow["{\pil}"', from=2-1, to=2-2]
	\arrow["{\Dec_\top p}"', from=1-1, to=2-1]
	\arrow["{\pil}", from=1-1, to=1-2]
	\arrow["{\delta (p_0)}", from=1-2, to=2-2]
	\arrow["{(1)}"{description}, draw=none, from=1-1, to=2-2]
\end{tikzcd}\]
is a homotopy pullback.
\end{lem}

\begin{proof}
The pullback property can be checked level-wise. Note that $(\Dec_\top X)_n = X_{n+1}$. In level  $n \geq 0$, the square $(1)$ is
\[\begin{tikzcd}
	{X_{n+1}} & {X_0} \\
	{Y_{n+1}} & {Y_0},
	\arrow["{d_\perp^{n+1}}", from=1-1, to=1-2]
	\arrow["{p_{n+1}}"', from=1-1, to=2-1]
	\arrow["{d_\perp^{n+1}}"', from=2-1, to=2-2]
	\arrow["{p_0}", from=1-2, to=2-2]
\end{tikzcd}\]
which is a homotopy pullback since $p$ is a right fibration.
\end{proof}

We also have a natural transformation $\pif \colon \Dec_\perp X \rightarrow \delta (X_0)$ defined as follows: the simplicial map $\pif \colon \Dec_\perp X \rightarrow \delta (X_0)$ sends an $n$-simplex $\lambda$ in $\Dec_\perp X$ to $d_\top^{n+1}(\lambda)$ in $X_0$ and an arrow $\alpha \colon \lambda \rightarrow \eta$ in $(\Dec_\perp X)_n$ to $d_\top^n(\alpha)$ in $X_0$. The proof of the following result is analogous to that of Lemma \ref{lemma: pi top is cartesian}.

\begin{lem}\label{lemma: pi lower is cartesian}
The natural transformation $\pif$ is cartesian on left fibrations. 
\end{lem}

\section{Slices and intervals}\label{subsec:slices}

In this section, we introduce some constructions with slice and coslice of decomposition groupoids required to introduce the concept of interval.

\begin{lem}\cite[Proposition 2.1]{unital}\label{unitallemafor2segalupper}
Let $X$ be a decomposition groupoid. For all $0 \leq i \leq n$ the following squares are homotopy pullbacks:
\begin{center}
\begin{tikzcd}
X_{n+1} \drpullback \arrow[r, "s_{i+1}"]\arrow[d, "d_{0}"']& X_{n+2} \arrow[d, "d_{0}"] \\
X_n \arrow[r, "s_i"']& X_{n+1}.
\end{tikzcd} \; \;
\begin{tikzcd}
X_{n+1} \drpullback \arrow[r, "s_{i}"]\arrow[d, "d_{n+1}"']& X_{n+2} \arrow[d, "d_{n+2}"] \\
X_n \arrow[r, "s_i"']& X_{n+1}.
\end{tikzcd}
\end{center} 
\end{lem}

The pullbacks of Lemma \ref{unitallemafor2segalupper} are called the upper and lower unital condition.
 

\begin{defi}\label{defislicegroupoid}
Let $X$ be a decomposition groupoid. For an object $y$ in $X_0$, the \emph{slice} $X_{/y}$ is defined as the homotopy pullback 
\[\begin{tikzcd}
	{X_{/y}} & 1 \\
	{\Dec_\top X} & {\delta (X_0)}.
	\arrow["{\pil}"', from=2-1, to=2-2]
	\arrow["u"', from=1-1, to=2-1]
	\arrow[from=1-1, to=1-2]
	\arrow["{\ulcorner y \urcorner}", from=1-2, to=2-2]
	\arrow[draw=none, from=1-1, to=2-2]
	\arrow["\lrcorner"{anchor=center, pos=0.125}, draw=none, from=1-1, to=2-2]
\end{tikzcd}\]
\end{defi}


\begin{rema}
Taking the upper decalage construction of $X$ gives a simplicial object starting in $X_1$, but equipped with an augmentation $d_0 \colon X_1 \rightarrow X_0$. Pulling back this simplicial object along $\ulcorner y \urcorner \colon 1 \rightarrow X_0$, yields a new simplicial object which is $X_{/y}$. The map $u$ is cartesian since $1 \rightarrow \delta (X_0)$ is cartesian and cartesian maps are stable under pullback. Therefore, $u$ is a right fibration, and as a consequence, $X_{/y}$ is Segal by \ref{propobesegalright}. 
\end{rema}

\begin{defi}\label{terminalobject}
Let $X$ be a Segal groupoid. An object $b \in X_0$ is called \emph{terminal} if the map $d_{\top} \circ u$ $X_{/b} \rightarrow X$ is a levelwise equivalence, where $d_\top \colon \Dec_{\top} X \to X$ is the upper dec map.
\end{defi}

\begin{propo}\label{existenceterminalwithdecalage}
Let $X$ be a decomposition groupoid. Then for an object $y$ in $X_0$, the object $s_0(y)$ is terminal in $X_{/y}$. 
\end{propo}

\begin{proof}
In the diagram
\[\begin{tikzcd}
	{(X_{/y})_{/s_0(y)}} & 1 \\
	{\Dec_\top X_{/y}} & {\delta ((X_{/y})_0)} \\
	{\Dec_\top \Dec_\top X} & {\delta ((\Dec_\top X)_0)}.
	\arrow[from=1-1, to=1-2]
	\arrow["{u'}"', from=1-1, to=2-1]
	\arrow["{\pil}"', from=2-1, to=2-2]
	\arrow["{\ulcorner s_0y \urcorner}", from=1-2, to=2-2]
	\arrow["{\Dec_{\top} u}"', from=2-1, to=3-1]
	\arrow["{\pil}"', from=3-1, to=3-2]
	\arrow["{\delta (u_0)}", from=2-2, to=3-2]
	\arrow["{(1)}"{description}, draw=none, from=1-1, to=2-2]
	\arrow["{(2)}"{description}, draw=none, from=2-1, to=3-2]
\end{tikzcd}\]
the square $(1)$ is a homotopy pullback by definition of ${(X_{/y})_{/s_0(y)}}$. Since $u \colon X_{/y} \rightarrow \Dec_{\top} X$ is a right fibration, we have that $(2)$ is a homotopy pullback by Lemma \ref{lemma: pi top is cartesian}. Therefore, the outer diagram is a homotopy pullback. Furthermore, note that $\delta (u_0)(s_0(y)) = s_0(y)$. This means that $(X_{/y})_{/{s_0(y)}}$ is the homotopy pullback of $\pil$ along $\ulcorner s_0y \urcorner \colon 1 \rightarrow \delta ((\Dec_\top X)_0)$. Note that in the diagram
\[\begin{tikzcd}
	{X_{/y}} & 1 \\
	{\Dec_\top X} & {\delta (X_0)} \\
	{\Dec_\top \Dec_\top X} & {\delta ((\Dec_\top X)_0)}
	\arrow[from=1-1, to=1-2]
	\arrow["u"', from=1-1, to=2-1]
	\arrow["{\pil}"', from=2-1, to=2-2]
	\arrow["{\ulcorner y \urcorner}", from=1-2, to=2-2]
	\arrow["{H}"', from=2-1, to=3-1]
	\arrow["{\pil}"', from=3-1, to=3-2]
	\arrow["{\delta (s_0)}", from=2-2, to=3-2]
	\arrow["{(3)}"{description}, draw=none, from=1-1, to=2-2]
	\arrow["{(4)}"{description}, draw=none, from=2-1, to=3-2]
\end{tikzcd}\]
the square $(3)$ is a homotopy pullback by definition of $X_{/y}$.  The map $H \colon \Dec_\top X \to \Dec_\top \Dec_\top X$ is defined by $H((\Dec_\top X)_n) = s_{n+1}(X_{n+1})$. The square $(4)$ is a pullback as a consequence of Lemma \ref{unitallemafor2segalupper} and the definition of $H$. 


Combining $(3)$ and $(4)$, the outer diagram is a homotopy pullback. Furthermore, note that $\delta (s_0)(y) = s_0(y)$. This means that $X_{/y}$ is the homotopy pullback of $\pil$ along $\ulcorner s_0y \urcorner \colon 1 \rightarrow \delta ((\Dec_\top X)_0)$. Since $X_{/y}$ and $(X_{/y})_{/s_0(y)}$ are homotopy pullbacks over the same diagram, we get a canonical identification $(X_{/y})_{/s_0(y)} \cong X_{/y}$.  Furthermore, this identification is given by the canonical projection map $\operatorname{pr} \colon (X_{/y})_{/s_0(y)} \to X_{/y}$ since $H \circ u \circ \operatorname{pr} = u' \circ \Dec_\top u$. 
\end{proof}

\begin{defi}\label{defilowercoslicegroupoid}
Let $X$ be a decomposition groupoid. For an object $x$ in $X_0$, the \emph{coslice} $X_{x/}$ is defined as the homotopy pullback
\[\begin{tikzcd}
	{X_{x/}} & 1 \\
	{\Dec_\perp X} & {\delta (X_0)}.
	\arrow["{\pif}"', from=2-1, to=2-2]
	\arrow["v"', from=1-1, to=2-1]
	\arrow[from=1-1, to=1-2]
	\arrow["{\ulcorner x \urcorner}", from=1-2, to=2-2]
	\arrow[draw=none, from=1-1, to=2-2]
	\arrow["\lrcorner"{anchor=center, pos=0.125}, draw=none, from=1-1, to=2-2]
\end{tikzcd}\]
\end{defi}

\noindent We write $v \colon X_{x/} \rightarrow \Dec_\perp X$ for the canonical map of Definition \ref{defilowercoslicegroupoid}. Note that for each $x$ in $X_0$, the coslice $X_{x/}$ is Segal. Indeed, $\Dec_\perp X$ is Segal and $X_{x/}$ is a left fibration over $\Dec_\bot X$, and is therefore Segal too by Lemma \ref{propobesegalright}.


\begin{defi}\label{initialobject}
Let $X$ be a Segal groupoid. An object $a \in X_0$ is called \emph{initial} if the map $d_\perp \circ v \colon X_{a/} \rightarrow X$ is a levelwise equivalence, where $d_\perp \colon \Dec_{\perp} X \to X$ is the lower dec map.
\end{defi}


\begin{propo}\label{existenceinitialwithdecalage}
Let $X$ be a decomposition groupoid. For an object $x$ in $X_0$, the object $s_0(x)$ is an initial object in $X_{x/}$.
\end{propo}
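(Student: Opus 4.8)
The plan is to run the proof of Proposition~\ref{existenceterminalwithdecalage} in its order-reversed form, replacing the upper dec $\Dec_\top X$ by the lower dec $\Dec_\bot X$, the slice by the coslice, the bottom face maps $d_\bot$ by the top face maps $d_\top$, and the upper unital condition (Lemma~\ref{unitallemafor2segalupper}) by the lower unital condition (Lemma~\ref{unitallemafor2segallower}). By Definition~\ref{initialobject} it suffices to prove that the projection $(X_{x/})_{a/} \to X_{x/}$ is a levelwise equivalence, where $a$ denotes the chosen object $s_0(x)$. First I would pin down $\ulcorner a \urcorner : 1 \to (X_{x/})_0$ by the pullback square dual to the one that defines $t$ in the terminal case, identifying $a$ with $s_0 x$ under the canonical map $v : X_{x/} \to \Dec_\bot X$.

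Next, in degree $0$, I would paste together a staircase of pullback squares dual to squares $(1)$--$(6)$: the squares arising from the constructions of $(X_{x/})_{a/}$ and of $X_{x/}$ are pullbacks by definition of the coslice, and the remaining squares are pullbacks because $X$ is lower $2$-Segal (these are now the squares built from $d_\top$, that is the right-hand squares of Definition~\ref{defiupperlower2}). Pasting shows that $((X_{x/})_{a/})_0$ is the pullback of $d_\top : X_2 \to X_1$ along $\ulcorner s_0 x \urcorner : 1 \to X_1$. Dualising squares $(7)$--$(8)$ and invoking the lower unital condition (Lemma~\ref{unitallemafor2segallower}) in place of Lemma~\ref{unitallemafor2segalupper}, I would then show that $(X_{x/})_0$ is the pullback of the same map $d_\top : X_2 \to X_1$ along the same point $\ulcorner s_0 x \urcorner$. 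Hence $((X_{x/})_{a/})_0 \simeq (X_{x/})_0$, and the identical argument in each degree $n \geq 0$ gives $(X_{x/})_{a/} \simeq X_{x/}$.

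The only real difficulty is bookkeeping: making sure the indices transport correctly under the duality. Because the lower dec retains the top face maps, the squares invoked are the lower $2$-Segal squares (verticals $d_\top$, horizontals $d_i$) and the lower unital squares of Lemma~\ref{unitallemafor2segallower} (involving $d_{n+1}$, $d_{n+2}$ and a degeneracy $s_i$), rather than the $d_\bot$-squares and upper unital squares used for the terminal object. Once each square is matched to exactly the pullback guaranteed by the lower $2$-Segal hypothesis or by Lemma~\ref{unitallemafor2segallower}, the pasting and the conclusion are formally the same as in Proposition~\ref{existenceterminalwithdecalage}.
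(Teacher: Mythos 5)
Your proposal is correct and is exactly what the paper does: its entire proof of this proposition is the single sentence that the argument is analogous to Proposition~\ref{existenceterminalwithdecalage}, using Lemma~\ref{unitallemafor2segallower} in place of Lemma~\ref{unitallemafor2segalupper}. You have simply made the dualisation explicit, with the indices and pullback squares correctly transported.
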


\begin{proof}
The proof is analogous to that of Proposition \ref{existenceterminalwithdecalage}.
\end{proof}

\begin{lem}\label{hereditaryinitialobject}
Let $\mathcal{C}$ be a Segal groupoid with an initial object $\perp_{\mathcal{C}}$. Then for each object $y$ in $\mathcal{C}$, the slice $\mathcal{C}_{/y}$ has an initial object.
\end{lem}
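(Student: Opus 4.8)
The plan is to produce an explicit initial object of $\mathcal{C}_{/y}$, namely the essentially unique arrow from the initial object of $\mathcal{C}$ into $y$, and then to verify initiality by a decalage computation in the spirit of Propositions \ref{existenceterminalwithdecalage} and \ref{existenceinitialwithdecalage}. Write $a$ for the initial object of $\mathcal{C}$, so that by Definition \ref{initialobject} the projection $\Phi\colon \mathcal{C}_{a/} \to \mathcal{C}$ is a levelwise equivalence; concretely $\Phi_n = d_0$ in each degree. Since $\mathcal{C}$ is a decomposition groupoid, $\mathcal{C}_{/y}$ is a Segal groupoid by the remark following Definition \ref{defislicegroupoid}, hence a decomposition groupoid by Proposition \ref{segaldecomp} and in particular a lower $2$-Segal groupoid by Theorem \ref{theoremdecalage}; thus its coslices (Definition \ref{defilowercoslicegroupoid}) and the notion of initial object are available for it.

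First I would construct the candidate. In degree $0$ the equivalence $\Phi_0\colon (\mathcal{C}_{a/})_0 \to \mathcal{C}_0$ is essentially surjective, so there is an object $w \in (\mathcal{C}_{a/})_0 \subseteq \mathcal{C}_1$ with $\Phi_0(w) = d_0(w) \simeq y$; that is, $w$ is an arrow with source $a$ and target $y$, hence an object of $(\mathcal{C}_{/y})_0$. This $w$ is the proposed initial object, and it remains to show that the coslice projection $(\mathcal{C}_{/y})_{w/} \to \mathcal{C}_{/y}$ is a levelwise equivalence.

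The heart of the argument is a degreewise identification. Unwinding Definition \ref{defilowercoslicegroupoid} together with the construction of $\mathcal{C}_{/y}$, an $n$-simplex of $(\mathcal{C}_{/y})_{w/}$ is an $(n+2)$-simplex $\sigma$ of $\mathcal{C}$ whose top vertex is $y$ and whose long edge $v_0 \to v_{n+2}$ equals $w$; in particular its bottom vertex is $a$. Hence $((\mathcal{C}_{/y})_{w/})_n = \hFib_w(\ell)$, where $\ell$ is the long-edge map, and since every such $\sigma$ has bottom vertex $a$ this homotopy fibre is taken inside the coslice $\mathcal{C}_{a/}$: precisely, $\ell$ restricts to the simplicial operator $\theta^*\colon (\mathcal{C}_{a/})_{n+1} \to (\mathcal{C}_{a/})_0$ induced by $\theta\colon [0] \to [n+1]$, $0 \mapsto n+1$. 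On the other hand $(\mathcal{C}_{/y})_n = \hFib_y(\theta^*)$ for the corresponding top-vertex operator on $\mathcal{C}$. Since $\Phi$ is a simplicial map it commutes with $\theta^*$, yielding a commuting square whose two horizontal maps $\Phi_{n+1}$ and $\Phi_0$ are equivalences; the induced comparison on the vertical homotopy fibres over $w$ and $\Phi_0(w) \simeq y$ is therefore an equivalence by Lemma \ref{lemmapullbackfibres}. That comparison is exactly $\Phi_{n+1} = d_0$ restricted, i.e. the coslice projection in degree $n$; so $((\mathcal{C}_{/y})_{w/})_n \simeq (\mathcal{C}_{/y})_n$ compatibly in $n$, and $w$ is initial.

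I expect the main obstacle to be bookkeeping rather than anything conceptual: one must pin down the face and degeneracy conventions carefully enough to check that (i) the bottom augmentation of $\Dec_\bot \mathcal{C}_{/y}$ really selects the long edge $v_0 \to v_{n+2}$, and (ii) this long-edge map is genuinely a simplicial operator of the coslice $\mathcal{C}_{a/}$ — equivalently, that the two decalages $\Dec_\top$ and $\Dec_\bot$ and the pullback along $\ulcorner y \urcorner$ interact in the required way. Once these identifications are in place, initiality of $w$ drops out of Lemma \ref{lemmapullbackfibres} with no further computation.
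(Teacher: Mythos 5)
Your proof is correct and follows essentially the same route as the paper: both choose the arrow $w = f_\perp\colon \perp_{\mathcal{C}} \to y$, identify $((\mathcal{C}_{/y})_{w/})_n$ degreewise with the corresponding fibre computed inside the coslice $\mathcal{C}_{\perp/}$ by pasting pullbacks, and then transfer along the levelwise equivalence $\mathcal{C}_{\perp/}\to\mathcal{C}$ using Lemma \ref{lemmapullbackfibres}. The only difference is presentational: the paper writes out the pasted pullback squares explicitly, naming the intermediate object $(\mathcal{C}_{\perp/})_{/f_\perp}$, whereas you package the same two comparisons as homotopy fibres of the top-vertex operator.
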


\begin{proof}
Since $\perp_{\mathcal{C}}$ is an initial object, we have a map $f_{\perp_\mathcal{C}} \colon \perp_{\mathcal{C}} \rightarrow y$. This map can be regarded as an object in $\mathcal{C}_{/y}$ or in $\mathcal{C}_{\bot /}$, and after two pullbacks of $\Dec_\top \Dec_\perp = \Dec_\perp \Dec_\top$ we get the natural identification $(\mathcal{C}_{/y})_{f_{\perp_{\mathcal{C}} /}} \cong (\mathcal{C}_{\perp_{\mathcal{C}}/})_{/ f_{\perp_{\mathcal{C}}}}$. Furthermore, in the diagram
\[\begin{tikzcd}
	{(\mathcal{C}_{\perp_{\mathcal{C}}/})_{/ f_{\perp_{\mathcal{C}}}}} & 1 \\
	{\Dec_\top \mathcal{C}_{\perp_{\mathcal{C}}/}} & {\delta ((\mathcal{C}_{\perp_{\mathcal{C}}/})_0)} \\
	{\Dec_\top \mathcal{C}} & {\delta (\mathcal{C}_0)}
	\arrow[from=1-1, to=1-2]
	\arrow["u"', from=1-1, to=2-1]
	\arrow["{\pil}"', from=2-1, to=2-2]
	\arrow["{\ulcorner f_{\perp_{\mathcal{C}}} \urcorner}", from=1-2, to=2-2]
	\arrow["{\Dec_{\top} d_\perp}"', from=2-1, to=3-1]
	\arrow["{\pil}"', from=3-1, to=3-2]
	\arrow["{\delta (d_\perp)}", from=2-2, to=3-2]
	\arrow["{(1)}"{description}, draw=none, from=1-1, to=2-2]
	\arrow["{(2)}"{description}, draw=none, from=2-1, to=3-2]
\end{tikzcd}\]
the square $(1)$ is a homotopy pullback by definition of ${(\mathcal{C}_{\perp_{\mathcal{C}}/})_{/ f_{\perp_{\mathcal{C}}}}}$. Since $\perp_{\mathcal{C}}$ is an initial object, we have that $d_\perp \colon \mathcal{C}_{\perp /} \rightarrow \mathcal{C}$ is a levelwise equivalence. This implies that $(2)$ is a homotopy pullback. Combining $(1)$ and $(2)$, we have that the outer diagram is a homotopy pullback. 
Furthermore, note that $d_\perp(f_{\perp_{\mathcal{C}}}) = y$. This means that ${(\mathcal{C}_{\perp_{\mathcal{C}}/})_{/ f_{\perp_{\mathcal{C}}}}}$ is the homotopy pullback of $\pil$ along $\ulcorner y \urcorner \colon 1 \rightarrow \delta (\mathcal{C}_0)$. But this is precisely the definition of $\mathcal{C}_{/y}$. This implies that ${(\mathcal{C}_{\perp_{\mathcal{C}}/})_{/ f_{\perp_{\mathcal{C}}}}} \cong \mathcal{C}_{/y}$ and therefore $(\mathcal{C}_{/y})_{f_{\perp_{\mathcal{C}} /}} \cong \mathcal{C}_{/y}$. Furthermore, this isomorphism is given by the canonical projection map $\operatorname{pr} \colon(\mathcal{C}_{/y})_{f_{\perp_{\mathcal{C}} /}}  \to \mathcal{C}_{/y}$ since $u'' \circ \operatorname{pr} = \Dec_{\top} d_{\perp} \circ u$, where $u'' \colon \mathcal{C}_{/y} \to \Dec_\top \mathcal{C}$ denotes the canonical map of Definition \ref{defislicegroupoid}.
\end{proof}

\begin{lem}\label{heredetaryterminalobject}
Let $\mathcal{C}$ be a Segal groupoid with a terminal object. Then for each object $x$ in $\mathcal{C}$, the coslice $\mathcal{C}_{x/}$ has a terminal object.
\end{lem}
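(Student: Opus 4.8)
The statement is exactly dual to Lemma~\ref{hereditaryinitialobject}, so the plan is to run the same argument with the roles of initial and terminal, slice and coslice, upper and lower dec, and the outer face maps $d_\top$ and $d_\bot$ interchanged. Since $\mathcal{C}$ has a terminal object $\top_\mathcal{C}$, for each $x \in \mathcal{C}_0$ there is a map $g_\top : x \to \top_\mathcal{C}$, that is, an object of $(\mathcal{C}_{x/})_0$. By Definition~\ref{terminalobject}, proving that $\mathcal{C}_{x/}$ has a terminal object amounts to showing that this $g_\top$ is one, i.e. that the projection $(\mathcal{C}_{x/})_{/g_\top} \to \mathcal{C}_{x/}$ is a levelwise equivalence. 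All the iterated (co)slices in sight are legitimate: $\mathcal{C}$ is a decomposition groupoid, hence both upper and lower $2$-Segal by Theorem~\ref{theoremdecalage}, while the coslice $\mathcal{C}_{x/}$ and the slice $\mathcal{C}_{/\top}$ are Segal and therefore again upper and lower $2$-Segal by Proposition~\ref{segaldecomp}.

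Following the initial case, I would route the desired equivalence through the mixed object $(\mathcal{C}_{/\top})_{x/}$ and prove two levelwise equivalences,
\[
(\mathcal{C}_{x/})_{/g_\top} \;\simeq\; (\mathcal{C}_{/\top})_{x/} \qquad\text{and}\qquad (\mathcal{C}_{/\top})_{x/} \;\simeq\; \mathcal{C}_{x/},
\]
whose composite is the asserted $(\mathcal{C}_{x/})_{/g_\top} \simeq \mathcal{C}_{x/}$. For the first equivalence I would compute both sides degreewise by pasting the defining pullback squares. On the left, the slice construction presents $((\mathcal{C}_{x/})_{/g_\top})_n$ as a pullback along a face map $d_0$ of the Segal groupoid $\mathcal{C}_{x/}$, which post-composed with the canonical map $v$ becomes a face map $d_1$ of $\mathcal{C}$; on the right, the coslice construction presents $((\mathcal{C}_{/\top})_{x/})_n$ as a pullback along a face map $d_1$ of the Segal groupoid $\mathcal{C}_{/\top}$, which post-composed with $u$ again becomes $d_1$ of $\mathcal{C}$. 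Both therefore exhibit the same pullback over $\mathcal{C}$, mirroring squares $(1)$--$(4)$ of the proof of Lemma~\ref{hereditaryinitialobject}, and this gives the degreewise equivalence. For the second equivalence I would use that $\top_\mathcal{C}$ is terminal, so that by Definition~\ref{terminalobject} the projection $\mathcal{C}_{/\top} \to \mathcal{C}$ is a levelwise equivalence; feeding this into the pullback square that defines the coslice $(\mathcal{C}_{/\top})_{x/}$, the analogue of squares $(5)$--$(6)$, collapses it degreewise onto $\mathcal{C}_{x/}$.

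The conceptual content is identical to Lemma~\ref{hereditaryinitialobject}, and I do not expect a genuine obstacle. The only delicate point is the index bookkeeping inherent in the dualisation: one must systematically replace each $d_0$ by $d_\top$ and each $s_0$ by $s_\top$, swap the upper-dec pullbacks with lower-dec ones, and invoke the lower unital condition (Lemma~\ref{unitallemafor2segallower}) wherever the initial argument used the upper unital condition (Lemma~\ref{unitallemafor2segalupper}). Once the dual pullback diagrams are written down with the correct face-map indices, the pasting-of-pullbacks arguments transfer verbatim.
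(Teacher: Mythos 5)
Your proposal is correct and matches the paper exactly: the paper's own proof of this lemma is the single sentence ``The proof is analogous to Lemma~\ref{hereditaryinitialobject},'' and your dualisation through $(\mathcal{C}_{/\top})_{x/}$ is precisely the intended working-out of that analogy, mirroring the two equivalences $(\mathcal{C}_{/y})_{f_\perp/} \simeq (\mathcal{C}_{\perp/})_{/f_\perp} \simeq \mathcal{C}_{/y}$ from the initial case.
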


\begin{proof}
The proof is analogous to that of Lemma \ref{hereditaryinitialobject}.
\end{proof}

Let $X$ be a decomposition groupoid. For $\lambda \colon \Delta^n \rightarrow X$, we denote by $\longt(\lambda)$ the $1$-simplex $\begin{tikzcd}[cramped, sep=small]
\Delta^1   \arrow[r, -act, ""]    &   \Delta^n  \arrow[r, "\lambda"]                      & X                   
\end{tikzcd}$.  Applying lower and upper decalage to $X$, we obtain a new decomposition groupoid $\Dec_{\top} \Dec_{\perp} X$ and a map $\epsilon \colon \Dec_{\top} \Dec_{\perp} X \rightarrow X$ which is culf by Proposition \ref{decareculfs}. Furthermore, we have a natural transformation $\pilong \colon \Dec_\top \Dec_\perp X \rightarrow \delta (X_1)$ defined as follows: the map $\pilong \colon \Dec_\top  \Dec_\perp X \rightarrow \delta (X_1)$ sends an $n$-simplex $\lambda$ in $\Dec_\top  \Dec_\perp X$ to $\longt(\lambda)$ in $X_1$ and an arrow $\alpha \colon \lambda \rightarrow \eta$ in $(\Dec_\top \Dec_\perp X)_n$ to $\longt(\alpha)$ in $X_1$.  The category $\simplexcategory^{\act}$ is the subcategory of $\simplexcategory$ whose objects are the nonempty finite ordinals and whose morphisms are the active maps. Since $[1]$ is terminal in $({\simplexcategory^{\act}})^{\op}$, the map  $\pilong$ is a simplicial map.

\begin{lem}\label{lemma: pi longt is cartesian}
The natural transformation $\pilong$ is cartesian on culf maps. That is, given a culf map  $F \colon X \rightarrow Y$ between decomposition groupoids, the square
\[\begin{tikzcd}
	{\Dec_\top \Dec_\perp X} & {\delta (X_1)} \\
	{\Dec_\top \Dec_\perp Y} & {\delta (Y_1)}
	\arrow["{\pilong}"', from=2-1, to=2-2]
	\arrow["{\delta (F_1)}", from=1-2, to=2-2]
	\arrow["{\Dec_\top \Dec_\perp F}"', from=1-1, to=2-1]
	\arrow["{\pilong}", from=1-1, to=1-2]
\end{tikzcd}\]
is a homotopy pullback.
\end{lem}

\begin{proof}
The pullback property can be checked levelwise. Note that $(\Dec_\top \Dec_\perp X)_n = X_{n+2}$. For $n \geq 0$, the square 
\[\begin{tikzcd}
	{X_{n+2}} & {X_1} \\
	{Y_{n+2}} & {Y_1}
	\arrow["{d_1^{n+1}}", from=1-1, to=1-2]
	\arrow["{F_{n+2}}"', from=1-1, to=2-1]
	\arrow["{d_1^{n+1}}"', from=2-1, to=2-2]
	\arrow["{F_1}", from=1-2, to=2-2]
\end{tikzcd}\]
is a homotopy pullback since $F$ is culf.
\end{proof}

\begin{defi}\label{definition: thenicalinterval}
Let $X$ be a decomposition groupoid and let $f$ be an object in $X_1$. The Segal groupoid $I_f$ is defined as the homotopy pullback, called the \emph{interval} of $f$,
\[\begin{tikzcd}
	{I_f} & 1 \\
	{\Dec_\top \Dec_\perp X} & {\delta (X_1)}.
	\arrow["{\pilong}"', from=2-1, to=2-2]
	\arrow["w"', from=1-1, to=2-1]
	\arrow[from=1-1, to=1-2]
	\arrow["{\ulcorner f \urcorner}", from=1-2, to=2-2]
	\arrow[draw=none, from=1-1, to=2-2]
	\arrow["\lrcorner"{anchor=center, pos=0.125}, draw=none, from=1-1, to=2-2]
\end{tikzcd}\]
\end{defi}
We write $w \colon I_f \rightarrow \Dec_{\top} \Dec_\bot X$ for the simplicial  map obtained in this way. From its construction as a pullback of a map between constant simplicial groupoids, it is clear that $w$ is culf. The double decalage construction induces a culf map $\mfunctor_f \colon I_f  \rightarrow X$, defined by the composition of  $w$ and the canonical map $\epsilon \colon \Dec_\top \Dec_\perp X \rightarrow X$.

\begin{rema}\label{defintervaldspace}
When $X$ is the ordinary nerve of a category, the description of $I_f$ is due to Lawvere~\cite{Law}: the objects of $I_f$ are two-step factorisations of $f$. The $1$-cells are arrows between such factorisations, or equivalently $3$-step factorisations, and so on. More generally, let $X$ be a decomposition set and $f \in X_1$. The Segal set $I_f$ is described as follows:
\begin{enumerate}
\item An object of $I_f$ is any $\sigma \in X_2$ such that $d_1(\sigma) = f$.
\item Given two objects $\sigma$ and ${\sigma'}$ in $I_f$, a morphism $\gamma \colon \sigma \rightarrow  {\sigma'}$ in $I_f$  is any object $\gamma \in X_3$, such that $d_2(\gamma) = \sigma$ and $d_1(\gamma) = {\sigma'}$. 
\item Given two morphisms $\gamma \colon \sigma \rightarrow \sigma'$ and ${\gamma'} \colon \sigma' \rightarrow {\sigma''}$ of $I_f$, the composition is defined by ${\gamma'} \circ \gamma \colon= d_2(\eta)$, where $\eta \in X_4$ satisfies that $d_1(\eta) = {\gamma'}$ and $d_3(\eta) = \gamma$.  
 The unique existence of  $\eta$ is a consequence of the decomposition-groupoid axioms in the form of Lemma \ref{lemma310}. Associativity also follows by Lemma \ref{lemma310}. 
\end{enumerate}
\end{rema}

Applying lower and upper decalage to $X$  generate two sections on $\Dec_\top \Dec_\perp X$. The first one is induced by $s_\perp \colon X \rightarrow \Dec_\perp X$ and the other is induced by $s_\top \colon X \rightarrow \Dec_\top X$.  We shall see later that $s_\perp(f)$ is an initial object and $s_\top(f)$ is a terminal object in $I_f$. 
Recall that we write $u \colon X_{/y} \rightarrow \Dec_{\top} X$ for the canonical map of Definition \ref{defislicegroupoid} and $v \colon X_{x/} \rightarrow \Dec_\perp X$ for the canonical map of Definition \ref{defilowercoslicegroupoid}. When further (co)slicing is used we decorate the $u$ or $v$ with a prime.

\begin{lem}\label{presentationlaw}
Let $X$ be a decomposition groupoid. For $f \in X_1$, put $x = d_1(f)$ and $d_0(f)= y$. There are canonical equivalences $(X_{x/})_{/f} \rightarrow I_f$ and $(X_{/y})_{f/} \rightarrow I_f$ such that the following diagram commutes up to isomorphism
\begin{center}
\begin{tikzcd}
(X_{x/})_{/f} \arrow[r, "\simeq"] \arrow[d, "d_\top \circ u"'] \arrow[white]{dr}[black, description]{(1)}  & I_f \arrow[white]{dr}[black, description]{(2)}  \arrow[d, "\mfunctor_f" description] & (X_{/y})_{f/} \arrow[l, "\simeq"'] \arrow[d, "d_\perp \circ v'"] \\
X_{x/} \arrow[r, "d_\perp \circ v"']                       & X                   & X_{/y}. \arrow[l, "d_\top \circ u'"]                       
\end{tikzcd}
\end{center}
\end{lem}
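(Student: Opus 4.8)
The plan is to prove both equivalences levelwise by pasting homotopy pullbacks, exactly in the style of Proposition~\ref{existenceterminalwithdecalage} and Lemma~\ref{hereditaryinitialobject}, and then to check that the resulting comparison maps are the canonical ones, so that the outer diagram commutes. Unwinding Definition~\ref{definitionthenicalinterval}, the groupoid $I_f$ is in each degree $n$ the fibre of the outer (long-edge) map $X_{n+2} \rightarrow X_1$ over $\ulcorner f \urcorner$; for $n=0$ this is the fibre of $d_1: X_2 \rightarrow X_1$, i.e. $\{\sigma \in X_2 : d_1\sigma = f\}$, as recorded in Remark~\ref{defintervaldspace}. The two slices-of-coslices will be identified with the same fibres. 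Since $X_{x/}$ is Segal, hence upper $2$-Segal, and $d_1 f = x$, the object $f$ lies in $(X_{x/})_0$ and $(X_{x/})_{/f}$ is defined via Definition~\ref{defislicegroupoid}; dually $X_{/y}$ is Segal, $d_0 f = y$, and $(X_{/y})_{f/}$ is defined via Definition~\ref{defilowercoslicegroupoid}.

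For the equivalence $(X_{x/})_{/f} \simeq I_f$, I would first record the relevant augmentations of the coslice: $(X_{x/})_0 = X_1 \times_{X_0} \{x\}$ along the first-vertex map $d_1$, and $(X_{x/})_1 = X_2 \times_{X_0} \{x\}$ along $d_1 d_1$, with $d_0^{X_{x/}} = d_1^X$ and $d_\top^{X_{x/}} = d_2^X$. Applying Definition~\ref{defislicegroupoid} to $X_{x/}$ at $f$ gives, in degree $0$, the pullback
\begin{center}
\begin{tikzcd}
((X_{x/})_{/f})_0 \drpullback \arrow[r] \arrow[d] & (X_{x/})_1 \arrow[d, "d_0"] \\
1 \arrow[r, "\ulcorner f \urcorner"'] & (X_{x/})_0 .
\end{tikzcd}
\end{center}
The crucial step is then to paste this with the square relating the two degrees of the coslice, namely $(X_{x/})_1 = X_2 \times_{X_1} (X_{x/})_0$ along $d_1^X$, which is a pullback because $d_1 d_1 \sigma = x$ is equivalent to $d_1\sigma \in (X_{x/})_0$. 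Here a redundancy does the work: since $d_1 f = x$, the fibre-over-$x$ condition is automatically enforced once $d_1 = f$ is imposed, so the two superimposed fibres collapse and the pasted outer rectangle is exactly $X_2 \times_{X_1,\, d_1} \{f\} = (I_f)_0$. By the analogous multi-square pasting in each degree $n$ — now using Lemma~\ref{lemma310} together with the unital Lemmas~\ref{unitallemafor2segalupper} and~\ref{unitallemafor2segallower} to certify the intermediate squares as pullbacks — one obtains $((X_{x/})_{/f})_n \simeq (I_f)_n$ compatibly in $n$, hence $(X_{x/})_{/f} \simeq I_f$. The equivalence $(X_{/y})_{f/} \simeq I_f$ is proved dually, exchanging upper and lower dec and using $d_0 f = y$ in place of $d_1 f = x$.

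Finally I would verify commutativity of the diagram by tracking the canonical projections on the fibres. On $(X_{x/})_{/f}$ the composite $d_\perp \circ v \circ d_\top \circ u$ sends, in degree $0$, an element $\sigma \in X_2$ with $d_1\sigma = f$ first to $d_\top^{X_{x/}}(\sigma) = d_2\sigma \in (X_{x/})_0$ and then to $d_0 d_2\sigma \in X_0$; this is precisely the degree-$0$ component of $\mfunctor_f = \epsilon \circ w$, which computes to $d_0 d_2$, so the left square commutes. For the right square the dual computation gives $d_1 d_0 \sigma$, and $d_1 d_0 = d_0 d_2$ by the simplicial identities (both select the middle vertex of the factorisation), so it agrees with $\mfunctor_f$ as well. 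The main obstacle is not conceptual but combinatorial bookkeeping: keeping straight the various augmentation and face maps of the iterated dec, certifying each intermediate square as a pullback by the appropriate lemma, and — most importantly — checking the redundancy that collapses the two nested fibres into the single outer-edge fibre, so that the equivalence produced is the canonical map making the square commute rather than merely an abstract equivalence of groupoids.
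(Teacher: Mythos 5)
Your argument is correct and follows essentially the same route as the paper: in each degree you paste the defining pullback square of the slice $((X_{x/})_{/f})_n$ with the pullback square exhibiting $(X_{x/})_{n+1}$ over $X_{n+2}$, identify the outer rectangle with the pullback defining $(I_f)_n$, and dualize for the coslice. Your explicit check that both composites to $X$ compute to $d_0 d_{n+2}$ (in degree $n$) is a welcome elaboration of the commutativity claim that the paper leaves implicit.
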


\begin{proof}
In the diagram
\[\begin{tikzcd}
	{(X_{x/})_{/f}} & 1 \\
	{\Dec_\top {X_{x/}}} & {\delta ((X_{x/})_0)} \\
	{\Dec_\top \Dec_\perp X} & {\delta (X_1)} \\
	{} & {}
	\arrow["{\ulcorner f\urcorner}", from=1-2, to=2-2]
	\arrow[from=1-1, to=1-2]
	\arrow["u"', from=1-1, to=2-1]
	\arrow["{\pil}"', from=2-1, to=2-2]
	\arrow["{\Dec_\top v}"', from=2-1, to=3-1]
	\arrow["{\pilong}"', from=3-1, to=3-2]
	\arrow["{\delta (v_0)}", from=2-2, to=3-2]
	\arrow["{(3)}"{description}, draw=none, from=1-1, to=2-2]
	\arrow["{(4)}"{description}, draw=none, from=2-1, to=3-2]
\end{tikzcd}\]
the square $(3)$ is a homotopy pullback by construction of $(X_{x/})_{/f}$ . Since $v$ is a right fibration the square $(4)$ is a homotopy pullback by Lemma \ref{lemma: pi lower is cartesian}. Therefore, the outer diagram is a homotopy pullback. Note that $\delta (v_0)(f) = y$. This implies that $(X_{x/})_{/f}$ is the homotopy pullback of $\pilong$ along $\ulcorner f \urcorner \colon 1 \rightarrow \delta (X_1)$. But this is precisely the definition of $I_f$. This gives us an equivalence $G \colon (X_{x/})_{/f} \rightarrow I_f$ such that $w \circ G \simeq \Dec_\top v \circ u$, which is the upper square in the diagram 
\[\begin{tikzcd}
	{(X_{x/})_{/f}} && {I_f} \\
	{\Dec_\top X_{x/}} & {\Dec_\top \Dec_\perp X} \\
	{X_{x/}} & {\Dec_\perp X} & X
	\arrow["{d_\top}"', from=2-1, to=3-1]
	\arrow["{\mfunctor_f}", from=1-3, to=3-3]
	\arrow["G", from=1-1, to=1-3]
	\arrow["u"', from=1-1, to=2-1]
	\arrow["\Dec_\top v", from=2-1, to=2-2]
	\arrow["\epsilon"{description}, from=2-2, to=3-3]
	\arrow["w"{description}, from=1-3, to=2-2]
	\arrow["{d_\top}"', from=2-2, to=3-2]
	\arrow["{d_\perp}"', from=3-2, to=3-3]
	\arrow["v"', from=3-1, to=3-2]
\end{tikzcd}\]
Since the other regions in the diagram commute strictly (by functoriality of upper decalage and by definition of $\epsilon$ and $\mfunctor_f$), we get a natural isomorphism for the outer square, which is precisely $(1)$. By analogous arguments, $(2)$ commutes up to isomorphism.
\end{proof}
When $X$ is the ordinary nerve of a category, Lemma \ref{presentationlaw}  is the same as Lemma 3.2 in~\cite{Law}. 

\begin{lem}\label{intervalisogrp}
Let $X$ be a Segal groupoid with an initial object $\bot$ and a terminal object $\top$. Let $h \colon \bot \rightarrow \top$ be a map from $\bot$ to $\top$, then $X \simeq I_{h}$.
\end{lem}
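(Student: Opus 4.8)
The plan is to combine the two earlier results on decalage and slices into a single chain of equivalences. The key observation is that Lemma~\ref{presentationlaw} already gives a canonical equivalence $(X_{x/})_{/f} \simeq I_f$ for any $x,y \in X_0$ and $f \in X_1$ with $d_1(f)=x$ and $d_0(f)=y$. Applying this to the case $x = \bot$, $y = \top$, and $f = h$, we obtain $I_h \simeq (X_{\bot/})_{/h}$. So it suffices to prove that $(X_{\bot/})_{/h} \simeq X$, i.e.\ that slicing over $h$ inside the coslice $X_{\bot/}$ recovers all of $X$.

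First I would unwind the coslice. Since $\bot$ is an initial object of $X$, Definition~\ref{initialobject} tells us precisely that the projection $d_\perp \circ v \colon X_{\bot/} \rightarrow X$ is a levelwise equivalence. (Here one should first check that $X$ is lower $2$-Segal, which holds because $X$ is a decomposition groupoid and hence $\Dec_\perp X$ is Segal by Theorem~\ref{theoremdecalage}, so the coslice $X_{\bot/}$ is defined and the notion of initial object applies.) Thus $X_{\bot/} \simeq X$ as simplicial groupoids, and under this identification the chosen object $h \in X_1$ corresponds to an object of $(X_{\bot/})_0$.

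Next I would slice this equivalence over $h$. The point is that $h$, viewed as an object of $X_1$ with domain $\bot$, becomes an object $\tilde h \in (X_{\bot/})_0$, and since $\bot$ is initial, $h$ is essentially the unique arrow out of $\bot$ with codomain $\top$; in fact $\top$ is terminal in $X$, and one expects $\tilde h$ to be a \emph{terminal} object of the Segal groupoid $X_{\bot/}$. By Definition~\ref{terminalobject}, slicing a (upper $2$-Segal) groupoid over a terminal object is a levelwise equivalence, so $(X_{\bot/})_{/\tilde h} \simeq X_{\bot/} \simeq X$. Chaining with Lemma~\ref{presentationlaw} gives the desired $X \simeq I_h$.

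The main obstacle will be the middle step: verifying that under the equivalence $X_{\bot/} \simeq X$ the object $h$ is sent to a terminal object of $X_{\bot/}$, so that Definition~\ref{terminalobject} applies. This requires reconciling terminality \emph{in} the Segal groupoid $X_{\bot/}$ with terminality of $\top$ in $X$ — i.e.\ showing that the slice projection carries the terminal object of $X$ to the terminal object of the coslice. Concretely, one would identify the object of $(X_{\bot/})_0$ picked out by $h$ with $s_0(\top)$ transported along the equivalence, and then invoke Proposition~\ref{existenceterminalwithdecalage} (applied to the upper $2$-Segal groupoid $X_{\bot/}$) to conclude it is terminal. Once this identification is made, the two halves of the argument glue together cleanly and the result follows without further computation.
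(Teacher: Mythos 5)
Your proposal is correct in outline and follows the same skeleton as the paper's proof: both arguments factor through the intermediate object $(X_{\bot/})_{/h}$, and your first half (invoking Lemma~\ref{presentationlaw} to get $I_h \simeq (X_{\bot/})_{/h}$) is exactly what the paper does, except that the paper redoes the pullback computation by hand rather than citing the lemma. Where you diverge is in the second half. The paper proves $(X_{\bot/})_{/h}\simeq X$ by recalling from the proof of Lemma~\ref{hereditaryinitialobject} the equivalence $(X_{\bot/})_{/h}\simeq X_{/\top}$ and then using terminality of $\top$ in $X$; you instead propose the dual chain $(X_{\bot/})_{/h}\simeq X_{\bot/}\simeq X$, using terminality of $h$ \emph{inside the coslice} for the first step and initiality of $\bot$ for the second. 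Both routes work and cost about the same; yours has the mild advantage of making the symmetry between the two hypotheses (initial and terminal object) more visible.

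One correction on the justification of your middle step: Proposition~\ref{existenceterminalwithdecalage} is not the right tool. It asserts that $s_0(y)$ is terminal in the \emph{slice} $X_{/y}$, whereas you need a terminal object in the \emph{coslice} $X_{\bot/}$; the dual of that proposition (Proposition~\ref{existenceinitialwithdecalage}) would only give you an \emph{initial} object of $X_{\bot/}$, namely $s_0(\bot)$. The statement you actually need is Lemma~\ref{heredetaryterminalobject} (the coslice of a decomposition groupoid with a terminal object again has a terminal object), whose proof, dualising Lemma~\ref{hereditaryinitialobject}, identifies that terminal object precisely as the map $\bot\to\top$, i.e.\ as $h$. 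Alternatively, your suggestion of transporting terminality along the levelwise equivalence $d_\bot v\colon X_{\bot/}\to X$ is viable: $h\in (X_{\bot/})_0$ lies over $\top\in X_0$, and since the slice construction is a pullback it is invariant under levelwise equivalence, so terminality of $\top$ in $X$ pulls back to terminality of $h$ in $X_{\bot/}$. Either repair closes the gap; with it, your argument is complete.
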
 
\begin{proof}
Applying Lemma \ref{presentationlaw} to $h$, we have that $(X_{\perp/})_{/h} \simeq I_h$. Applying Lemma \ref{hereditaryinitialobject} to $\top$, it follows that $X_{/\top} \simeq (X_{\perp/})_{/h}$. Furthermore, $X_{/ \top} \simeq X$ since $\top$ is a terminal object. Combining these equivalences, we get that $X \simeq I_{h}$. 
\end{proof}

\noindent When $X$ is the ordinary nerve of a category, Lemma \ref{intervalisogrp} is the same as Lemma 3.3 in~\cite{Law}. 

\begin{propo}\label{completeintervalcategories}
Let $X$ be a complete decomposition groupoid. Then for each $f \in X_1$, the Segal groupoid $I_f$ is complete in the sense of decomposition groupoids, meaning that $s_0 \colon (I_f)_0 \rightarrow (I_f)_1$ is a monomorphism.
\end{propo}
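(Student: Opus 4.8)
The plan is to verify completeness directly at the only level where it must be checked, namely to show that the degeneracy $s_0 \colon (I_f)_0 \rightarrow (I_f)_1$ is a monomorphism. Once this is established, the rest is automatic: $I_f$ is Segal by Remark~\ref{tecnicalargument}, hence a decomposition groupoid by Proposition~\ref{segaldecomp}, and for decomposition groupoids completeness is exactly the condition that $s_0$ be $(-1)$-truncated, from which it follows that all degeneracies of $I_f$ are monomorphisms.

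The key input is that the canonical map $w \colon I_f \rightarrow \Dec_\top \Dec_\bot X$ is CULF, as observed right after Definition~\ref{definitionthenicalinterval}, where it arises as a pullback of $\ulcorner f \urcorner$. Since the codegeneracy $s^0 \colon [1] \rightarrow [0]$ preserves endpoints it is an active map, so a CULF map is in particular cartesian on it. In other words, the naturality square of $w$ with respect to $s^0$,
\begin{center}
\begin{tikzcd}
(I_f)_0 \drpullback \arrow[r, "w_0"] \arrow[d, "s_0"'] & (\Dec_\top \Dec_\bot X)_0 \arrow[d, "s_0"] \\
(I_f)_1 \arrow[r, "w_1"'] & (\Dec_\top \Dec_\bot X)_1,
\end{tikzcd}
\end{center}
is a pullback. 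Unwinding the double decalage gives $(\Dec_\top \Dec_\bot X)_0 = X_2$ and $(\Dec_\top \Dec_\bot X)_1 = X_3$, with the degeneracy $s_0$ of $\Dec_\top \Dec_\bot X$ equal to the degeneracy $s_1 \colon X_2 \rightarrow X_3$ of $X$.

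With the pullback square in hand I would conclude as follows. Since $X$ is complete, $s_0 \colon X_0 \rightarrow X_1$ is a monomorphism, and hence by (\cite{GTK2}, Lemma 2.5) \emph{all} degeneracy maps of $X$ are monomorphisms; in particular $s_1 \colon X_2 \rightarrow X_3$ is one. Monomorphisms of groupoids (maps with $(-1)$-truncated fibres) are stable under pullback, so $s_0 \colon (I_f)_0 \rightarrow (I_f)_1$, being the pullback of $s_1$ along $w_1$, is a monomorphism as well. This is precisely completeness of $I_f$. The only delicate point is the bookkeeping in the double decalage — correctly identifying the degeneracy $s_0$ of $\Dec_\top \Dec_\bot X$ with $s_1$ of $X$, and checking that $s^0$ is active so that the CULF property really does deliver the pullback square; once this is verified the conclusion is a one-line stability-under-pullback argument, so I expect no serious obstacle.
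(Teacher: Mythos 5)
Your proof is correct and follows essentially the same route as the paper: both exhibit $s_0\colon (I_f)_0\to (I_f)_1$ as a pullback of a degeneracy of $X$, which is a monomorphism by completeness of $X$ together with (\cite{GTK2}, Lemma~2.5), and conclude by stability of monomorphisms under pullback. The only cosmetic difference is that you extract the pullback square directly from the CULF-ness of $w\colon I_f\to \Dec_\top\Dec_\bot X$ applied to the active map $s^0$, whereas the paper derives the same square via the prism lemma; your bookkeeping ($(\Dec_\top\Dec_\bot X)_0=X_2$, $(\Dec_\top\Dec_\bot X)_1=X_3$, and $s_0^{\Dec_\top\Dec_\bot X}=s_1^X$) is accurate.
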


\begin{proof}
By construction of $I_f$, we have the following diagram
\begin{center}
\begin{tikzcd}
(I_f)_0 \arrow[r, "s_0"] \arrow[d, "w_0"'] & (I_f)_1 \arrow[r, "d_0"] \arrow[d, "w_1"'] \drpullback & (I_f)_0 \arrow[d, "w_0"] \\
X_1 \arrow[r, "s_1"']                      & X_2 \arrow[r, "d_1"']                      & X_1.                     
\end{tikzcd}
\end{center}
Since $X$ is complete, the map $s_1 \colon X_1 \rightarrow X_2$ is a monomorphism, and therefore also its pullback $s_0 \colon (I_f)_0 \rightarrow (I_f)_1$ is a monomorphism,  which is to say that $I_f$ is complete.
\end{proof}

\section{Algebraic intervals and rigid decomposition groupoids} \label{sec: rigid ds}

To study the first case of the Gálvez--Kock--Tonks conjecture, the obvious level of generality would be discrete decomposition groupoids, but the proofs to be presented in this section work for locally discrete decomposition groupoids of the kind featured in the following definition: 

\begin{defi}\label{defi: discrete intervals dc}
A \emph{rigid decomposition groupoid} is a strict simplicial groupoid $X$ such that $d_1 \colon X_2 \rightarrow X_1$ is a discrete fibration, $s_0 \colon X_0 \rightarrow X_1$ is a monomorphism and the active-inert squares are strict pullbacks.
\end{defi}

The point, as we shall see, is for a rigid decomposition groupoid $X$, we have that for all $f \in X_1$, the Segal groupoid $I_f$ (\ref{definition: thenicalinterval}) is discrete. Note that every discrete decomposition groupoid is a rigid decomposition groupoid. This means that the rigid decomposition groupoids already cover locally finite posets, Cartier--Foata monoids and M\"{o}bius categories. The importance of locally discrete is to cover also strict (directed) restriction species as shown in the following example:

\begin{exa}\label{exa: DRS}
Let $\mathbb{I}$ be the category of finites sets and injections. Let $\mathbb{C}$ be the category of finite posets and convex monotone injections. A restriction species \cite{schmitt_1993} is a functor $R' \colon \mathbb{I}^{\op} \rightarrow \Set$ and a directed restriction species \cite{GKT:restr} is a functor $R \colon \mathbb{C}^{\op} \rightarrow \Set$. Note that any restriction species is a directed restriction species. An element of $R[P]$, where $P \in \mathbb{C}$, is called an $R$-structure on $P$. An $R$-structure on a poset $P$ thus restricts to any convex subposet $Q \subset P$. A directed restriction species $R$ induces a decomposition groupoid $\textbf{R}$ \cite{GKT:restr}, where $\textbf{R}_n$ is the groupoid of $R$ structures with an $n$-layering of the underlying poset $P$, that is a monotone map $P \rightarrow \underline{n}$, the linear order with $n$ elements. The map $d_1 \colon \textbf{R}_2 \rightarrow \textbf{R}_1$ forgets the layering and is clearly a discrete fibration. Altogether, the decomposition groupoid $\textbf{R}$ is rigid.
\end{exa}

\begin{exa}\label{intervalconstree}
Recall that for the decomposition groupoid of rooted trees \textbf{RT} of Example \ref{exatrees}, $\textbf{RT}_2$ is the groupoid of forests with an admissible cut, $\textbf{RT}_1$ is the groupoid of forests and the map $d_1 \colon \textbf{RT}_2 \rightarrow \textbf{RT}_1$ forgets the admissible cut. It is straightforward to see that $d_1$ is a discrete fibration. Therefore, \textbf{RT} is a rigid decomposition groupoid.

D\"{u}r~\cite{Dur} gave an incidence-coalgebra construction of the Butcher--Connes--Kreimer coalgebra by starting with the category of forests and root-preserving inclusions, generating a coalgebra and imposing the equivalence relation that identifies two root-preserving forest inclusions if their complement crowns are isomorphic forests. 

Consider the rigid decomposition groupoid of rooted trees $\textbf{RT}$. We can consider a tree $T$ as an object in $\textbf{RT}_1$. The interval $I_T$ can be described as follows: $(I_T)_0$ is the set of all isoclasses of admissible cuts of $T$, and $(I_T)_k$ is the set of isoclasses of all $k+1$ compatible admissible cuts of $T$.

We can relate the construction of D\"{u}r  with the interval construction of a rooted tree as follows: note that admissible cuts are essentially the same thing as root-preserving forest inclusions: then the cut is interpreted as the division between the included forest and the forest induced on the nodes in its complement. In this way we see that $(I_T)_k$ is the discrete groupoid of $k+1$ consecutive root-preserving inclusions ending in $T$.  
\end{exa}

\begin{rema}\label{tecnicalargument}

In a decomposition groupoid $X$, every active face map is a pullback of $d_1 \colon X_2 \rightarrow X_1$ \cite[Lemma 1.10]{GTK2}. Therefore, in the case where $X$ is rigid we have that $\pilong$ is a levelwise discrete fibration since in each level it is the long-edge map, which is a composition of active face maps, and these are all discrete fibrations. Therefore, the strict pullback of $\pilong$ along $\ulcorner f \urcorner \colon 1 \rightarrow \delta (X_1)$ is also a homotopy pullback. Furthermore, for every $f \in X_1$, the Segal groupoid $I_f$ is discrete.
\end{rema}

\begin{defi}\label{defi: choseterminalobject and initial}
Let $X$ be a discrete Segal groupoid.  A \emph{chosen terminal} object  is the choice of a terminal object $b$. A \emph{chosen initial} is the choice of an initial object $a$. 
\end{defi}

\begin{exa}\label{existencechosenterminalwithdecalage}
Let $X$ be a discrete decomposition groupoid. Let $y$ be an object in $X_0$. We already know from Proposition \ref{existenceterminalwithdecalage} that $s_0(y)$ is a terminal object in $X_{/y}$,  which we take as the chosen one. In this way $X_{/y}$ acquires a canonical chosen terminal. Similarly, $s_0(y)$ is an initial object in $X_{y/}$ (Proposition \ref{existenceinitialwithdecalage}), which we take as the chosen one. In this way $X_{x/}$ acquires a canonical chosen initial.
\end{exa}

\begin{defi}\label{defi: interval}
A \emph{discrete algebraic interval} is a discrete Segal groupoid $\mathcal{C}$ with a chosen initial object $\bot$ and a chosen terminal object $\top$. We denote the map from the chosen initial to the chosen terminal by $\ledge \colon \perp \rightarrow \top$.
\end{defi}

\begin{rema}
In the case of the nondiscrete algebraic intervals, further structure is required in the notion of chosen terminal, namely the choice of a section $s \colon X \rightarrow X_{/b}$ for the canonical map $d_\top u \colon X_{/b} \rightarrow X$. This will not be needed in the present paper, but the interested reader can find the theory of these worked out in Version 1 of this paper on arXiv.
\end{rema}

\begin{rema}
Batanin and Markl~\cite{BM} used the notion of a category
with chosen local terminal objects, meaning a category which in
each connected component is provided with a chosen terminal object.
This notion plays an important role in their theory of operadic
categories. Garner, Kock and Weber~\cite{KW} observed that the
structure of chosen local terminal objects is precisely to be a
coalgebra for the upper-Dec comonad. This in turn amounts to having
an extra top degeneracy map for the nerve of the category. When we
insist on having a chosen terminal object, it is inspired by this
decalage viewpoint on chosen terminals.  
Similarly of course, the notion of chosen local initial object amounts to
coalgebra structure for the lower-Dec comonad, via extra bottom
degeneracy maps, as the chosen initial object in our definition.
Finally, the main point here is the combination of the two
ideas. A discrete algebraic interval structure is in particular a coalgebra for the
two-sided-Dec comonad. This is very much in line with the notion of
flanking of G\'alvez--Kock--Tonks~\cite[\S 1]{GTK3}. 
\end{rema}

Definition \ref{definition: thenicalinterval} can be rewritten in terms of rigid decomposition groupoid as follows:

\begin{defi}\label{definition: strict interval}
Let $X$ be a rigid decomposition groupoid and let $f$ be an object in $X_1$. The Segal groupoid $I_f$ is defined as the strict pullback
\[\begin{tikzcd}
	{I_f} & 1 \\
	{\Dec_\top \Dec_\perp X} & {\delta (X_1)}.
	\arrow["{\pilong}"', from=2-1, to=2-2]
	\arrow["w"', from=1-1, to=2-1]
	\arrow[from=1-1, to=1-2]
	\arrow["{\ulcorner f \urcorner}", from=1-2, to=2-2]
	\arrow[draw=none, from=1-1, to=2-2]
	\arrow["\lrcorner"{anchor=center, pos=0.125}, draw=none, from=1-1, to=2-2]
\end{tikzcd}\]
In fact this strict pullback is also a homotopy pullback since $\pilong$ is a discrete fibration as a consequence of the fact that $d_1$ is a discrete fibration.
\end{defi}

We write $w \colon I_f \rightarrow \Dec_{\top} \Dec_\bot X$ for the simplicial  map obtained in this way. The double decalage construction induces a culf map $\mfunctor_f \colon I_f  \rightarrow X$, defined by the composition of $w$ and the canonical map $\epsilon \colon \Dec_\top \Dec_\perp X \rightarrow X$.

\begin{lem}\label{existenceterminalIF}\label{existenceinitialIF}
Let $X$ be a rigid decomposition groupoid and $f \in X_1$. The Segal groupoid $I_f$ has a canonical structure of an algebraic interval, where the chosen initial object is $s_0(f)$ and the chosen terminal object is $s_1(f)$.
\end{lem}

\begin{proof}
The object $s_1(f)$ is a terminal object in $I_f$ as a consequence of Lemma \ref{existenceterminalwithdecalage}, which we take as the chosen one. On the other hand, the object $s_0(f)$ is a initial object in $I_f$ as a consequence of Lemma \ref{existenceinitialwithdecalage}, which we take as the chosen initial. 
\end{proof} 

A simplicial map $F \colon X \rightarrow Y$ between rigid decomposition groupoids is called \emph{strict culf} if the naturality square for $F$ with respect any active map $[n] \rightarrowtail [k]$ in $\simplexcategory$ is a strict pullback. Since the active maps are fibrations in rigid decomposition groupoids, it follows that the strict pullbacks of a strict culf map are also homotopy pullbacks, so that strict culf map is in fact culf in the usual homotopy invariant sense. Lemma \ref{lemma: pi longt is cartesian} can be rewritten in terms of the strict condition as follows:

\begin{lem}\label{lemma: pi longt is strict cartesian}
The natural transformation $\pilong$ is cartesian on strict culf maps. That is, given a strict culf map  $F \colon X \rightarrow Y$ between rigid decomposition groupoids, the square
\[\begin{tikzcd}
	{\Dec_\top \Dec_\perp X} & {\delta (X_1)} \\
	{\Dec_\top \Dec_\perp Y} & {\delta (Y_1)}
	\arrow["{\pilong}"', from=2-1, to=2-2]
	\arrow["{\delta (F_1)}", from=1-2, to=2-2]
	\arrow["{\Dec_\top \Dec_\perp F}"', from=1-1, to=2-1]
	\arrow["{\pilong}", from=1-1, to=1-2]
\end{tikzcd}\]
is a strict pullback.
\end{lem}

\begin{proof}
The proof is analogous to that of Lemma \ref{lemma: pi longt is cartesian}. Furthermore, the strict pullback is also a homotopy pullback since $\pilong$ is a discrete fibration.
\end{proof}

The culf maps preserve the algebraic structure of a decomposition groupoid, but do not necessarily preserve the chosen initial and chosen terminal objects for a discrete algebraic interval. The maps that preserve this structure is the content of the following definition:  

\begin{defi}
A simplicial map  between discrete algebraic intervals  is termed \emph{stretched}, and written $\mathcal{C} \actto \mathcal{D}$, if it preserves the chosen initial object $\bot_\mathcal{C}$ and the chosen terminal object $\top_\mathcal{C}$.
\end{defi}

\begin{lem}\label{missing 3existencecell}
Let $X$ be a rigid decomposition groupoid and let $f$ be a $1$-simplex in $X$. The unique stretched map $\ledge \colon \Delta^1 \actto I_f$ is compatible with $\mfunctor_f$, 
  meaning that we have a commutative triangle
  \[
  \begin{tikzcd}
  \Delta^1 \ar[r, -act, "\ledge"] \ar[rd, "f"'] & I_f \ar[d, "\mfunctor_f"]  \\
   & X.
  \end{tikzcd}
  \]
\end{lem}

\begin{proof}
Put $x := d_\top(f)$ and $y := d_\perp(f)$ (the domain and codomain of $f$). Recall that the objects of $I_f$ are $2$-simplices with long edge
$f$.  The arrows in $I_f$ are $3$-simplices with long edge $f$.  We
know that the (chosen) initial object is $s_\bot(f)$ (which
can be thought of as the triangle with short sides $\identity_x$ and
$f$) and the (chosen) terminal object is $s_\top(f)$ (which
can be thought of as the triangle with short sides $f$ and
$\identity_y$).  The unique arrow $\ledge$ from the initial to the
terminal is the tetrahedron $s_\bot s_\top (f)$ (which we
can think of as the tetrahedron with short sides $\identity_x$, $f$,
and $\identity_y$). By definition $\mfunctor_f = d_\top d_\perp w$. Since $I_f$ is a discrete algebraic interval the map $w  \colon I_f \to \Dec_\top \Dec_\perp X$ is level-wise injective on objects. So what $\mfunctor_f$ does is that it applies $d_\top d_\bot$. In conclusion we have 
$\mfunctor_f(\ledge) =  d_\top d_\bot s_\top s_\bot(f) = f$, which is what we wanted to prove.
\end{proof}
 
\begin{exa}\label{example: Mstretched}
In the situation of Lemma \ref{missing 3existencecell}, if $X$ is already an interval and $f$ is its long edge, we see from the argument in the proof that $\mfunctor_f$ is stretched in this case we will see in \ref{lemma: mfunctor is invertible} that $\mfunctor_f$ is actually invertible in this case.
\end{exa}

\begin{rema}\label{rema: delta is stretched}
$\Delta^n$ is an algebraic interval for each $n$. The stretched maps $\Delta^m \actto \Delta^n$ are precisely the active maps. Every algebraic interval $A$ receives a unique stretched map from $\Delta^1$. A simplicial map between algebraic intervals $A \to B$ is stretched if and only if it commutes with the stretched maps from $\Delta^1$.
\end{rema}

Let $\mathcal{C}$ be a discrete algebraic interval. Let $\mathcal{C}_n$ denote the set of $n$-simplices in $\mathcal{C}$ and let $\mathcal{C}^{\strs}_n$ be the subset of stretched $(n+2)$-simplices in $\mathcal{C}$.

\begin{lem}\label{existssimplecesinterval}
The map $d_\top d_\perp \colon \mathcal{C}^{\strs}_{n+2} \rightarrow \mathcal{C}_n$ is a bijection.
\end{lem}

\begin{proof}
We will construct an inverse $t \colon \mathcal{C}_n \rightarrow \mathcal{C}^{\strs}_{n+2}$ of $d_\top d_\perp$ as follows. Let $\lambda$ be an object in $\mathcal{C}_n$, put $a = d_\top(\longt(\lambda))$ and $b = d_\bot(\longt(\lambda))$. Since $\mathcal{C}$ is a discrete algebraic interval, we have a chosen edge $f_\perp \colon \Delta^1 \rightarrow \mathcal{C}$ such that $d_\top (f_\perp) = \perp_\mathcal{C}$ and $d_\perp(f_\perp) = a$. By the same argument, we have a chosen edge $f_\top \colon \Delta^1 \rightarrow \mathcal{C}$ such that $d_\perp (f_\top) = \top_\mathcal{C}$ and $d_\top(f_\top) = b$.  In the diagram 
\begin{center}
\begin{tikzcd}
1 \arrow[rrd, "\lambda", bend left] \arrow[white]{rrd}[black, description]{(1)}     \arrow[rddd, "f_\perp"', bend right] \arrow[white]{rddd}[black, description]{(4)}  \arrow[rd, "\mu", dashed] &                                                      &                           \\
                                                                                                & \mathcal{C}_{n+1} \arrow[d, "d_2^{n-1}"' description] \arrow[r, "d_\perp"]  \arrow[white]{dr}[black, description]{(2)}     & \mathcal{C}_{n} \arrow[d, "d_1^{n-1}"] \\
                                                                                                & \mathcal{C}_2 \arrow[r, "d_\perp"] \arrow[d, "d_\top"']     \arrow[white]{dr}[black, description]{(3)}         & \mathcal{C}_1 \arrow[d, "d_\top"]   \\
                                                                                                & \mathcal{C}_1 \arrow[r, "d_\perp"']                            & \mathcal{C}_0                      
\end{tikzcd}
\end{center}
the squares $(2)$ and $(3)$ are strict pullbacks since $\mathcal{C}$ is a discrete Segal groupoid. Therefore, the outer rectangle is a strict pullback. By the pullback property of $\mathcal{C}_{n+1}$, there exists a unique map $\mu \colon 1 \rightarrow \mathcal{C}_{n+1}$ such that the diagram commutes. Since the square $(2)$ commutes and $d_\bot \mu = \lambda$, we have that $d_\bot \longt (\mu) = b$. Furthermore, in the diagram
\begin{center}
\begin{tikzcd}
1 \arrow[rrd, "\mu", bend left] \arrow[rddd, "f_\top"', bend right] \arrow[white]{rddd}[black, description]{(8)}  \arrow[rd, "\eta", dashed]  \arrow[white]{rrd}[black, description]{(5)}  \arrow[rd, "\eta", dashed] &                                                     &                          \\
                                                                                                  & \mathcal{C}_{n+2}  \arrow[white]{dr}[black, description]{(6)}      \arrow[d, "d_1^{n}"'] \arrow[r, "d_\top"] & \mathcal{C}_{n+1} \arrow[d, "d_1^{n}"]    \\
                                                                                                  & \mathcal{C}_2  \arrow[white]{dr}[black, description]{(7)}     \arrow[r, "d_\top"] \arrow[d, "d_\perp"']       & \mathcal{C}_1 \arrow[d, "d_\perp"] \\
                                                                                                  & \mathcal{C}_1 \arrow[r, "d_\top"']                            & \mathcal{C}_0                     
\end{tikzcd}
\end{center}
the squares $(6)$ and $(7)$ are strict pullbacks since $\mathcal{C}$ is a discrete Segal groupoid. Therefore, the outer rectangle is a strict pullback. By the pullback property of $\mathcal{C}_{n+2}$, there exists a unique map $\ulcorner \eta \urcorner \colon 1 \rightarrow \mathcal{C}_{n+2}$ such that the diagram commutes. Note that $d_\top (\longt (\eta)) = \perp_\mathcal{C}$ since $(4)$ commutes and $d_\top (f_\perp) = \perp_\mathcal{C}$.
Since $(8)$ commutes and $d_\perp (f_\top) = \top_\mathcal{C}$, we have that $d_\perp (\longt (\eta)) = \top_\mathcal{C}$. This together with $d_\top (\longt (\eta)) = \perp_\mathcal{C}$ implies that $\longt(\eta) = \ledge_\mathcal{C}$ since $\mathcal{C}$ is a discrete algebraic interval. We define
$$t(\lambda) := \eta.$$ Combining $(1)$ and $(5)$, we have that $d_\bot d_\top \eta = \lambda$. This means that $d_\top d_\perp(t(\lambda)) = \lambda$. Now we will check that $t \circ d_\top d_\perp = \identity_{\mathcal{C}^{\strs}_{n+2}}$. Let $\psi$ be an object in $\mathcal{C}^{\strs}_{n+2}$.
Since $\mathcal{C}$ is a discrete algebraic interval, we have a chosen edge $f'_\perp \colon \Delta^1 \rightarrow \mathcal{C}$ such that $d_\top (f'_\perp) = \perp_\mathcal{C}$ and $d_\perp(f'_\perp) = d_\bot(\longt(d_\top d_\perp \psi))$. By the same argument, we have a chosen edge $f'_\top \colon \Delta^1 \rightarrow \mathcal{C}$ such that $d_\perp (f'_\top) = \top_\mathcal{C}$ and $d_\top(f'_\top) = d_\bot(\longt(d_\top d_\perp \psi))$.
 The commutative diagrams 
  \[\begin{tikzcd}
	1 \\
	& {\mathcal{C}_{n+1}} & {\mathcal{C}_n} \\
	& {\mathcal{C}_1} & {\mathcal{C}_0}
	\arrow["{d_\top d_1^{n-1}}", from=2-3, to=3-3]
	\arrow["{d_\perp}"', from=3-2, to=3-3]
	\arrow["{d_\perp}", from=2-2, to=2-3]
	\arrow["{d_\top d_2^{n-1}}"' description, from=2-2, to=3-2]
	\arrow["{f'_\perp}"', bend right = 20, from=1-1, to=3-2]
	\arrow["{d_\top d_\perp \psi}", bend left =20, from=1-1, to=2-3]
	\arrow["{\mu'}"{description}, dashed, from=1-1, to=2-2]
	\arrow["\lrcorner"{anchor=center, pos=0.125}, draw=none, from=2-2, to=3-3]
\end{tikzcd} \; \; \; \begin{tikzcd}
	1 \\
	& {\mathcal{C}_{n+2}} & {\mathcal{C}_{n+1}} \\
	& {\mathcal{C}_1} & {\mathcal{C}_0}
	\arrow["{d_\perp d_1^{n}}", from=2-3, to=3-3]
	\arrow["{d_\top}"', from=3-2, to=3-3]
	\arrow["{d_\top}", from=2-2, to=2-3]
	\arrow["{d_\top d_1^{n}}"{description}, from=2-2, to=3-2]
	\arrow["{f'_\top}"', bend right =20, from=1-1, to=3-2]
	\arrow["{\mu'}", bend left = 20, from=1-1, to=2-3]
	\arrow["{\eta'}"{description}, dashed, from=1-1, to=2-2]
	\arrow["\lrcorner"{anchor=center, pos=0.125}, draw=none, from=2-2, to=3-3]
\end{tikzcd}\]
are given by the construction of $t$. Furthermore, $t(d_\top d_\perp (\psi)) = \eta'$. If we substitute $d_\top \psi$ by $\mu'$, we have that the left diagram commutes. By the uniqueness of $\mu'$, it follows that $\mu' = d_\top \psi$. This together with the stretched condition of $\psi$ implies that the right diagram commutes if we substitute $\psi$ by $\eta'$. Therefore, by the uniqueness of $\eta'$, we have that  
$\eta' = \psi$. This means that $t(d_\top d_\perp (\psi)) = \psi$.
\end{proof}

Suppose $X$ is a rigid decomposition groupoid. For an object $f \colon x \rightarrow y$, we have a canonical projection $\pi_m \colon (X_{x/})_{/f} \rightarrow X$ defined as the composite
\[\begin{tikzcd}
	{(X_{x/})_{/f}} & {\Dec_{\top} X_{x/}} & {\Dec_\top \Dec_\perp X} & X.
	\arrow["u", from=1-1, to=1-2]
	\arrow["{\Dec_\top v}", from=1-2, to=1-3]
	\arrow["\epsilon", from=1-3, to=1-4]
\end{tikzcd}\]
\begin{lem}\label{lemma: Lavw constr}
Let $\mathcal{C}$ be a discrete algebraic interval. The canonical projection $\pi_m \colon (\mathcal{C}_{\perp_{\mathcal{C}}/})_{/\ledge_{\mathcal{C}}} \rightarrow \mathcal{C}$ has an inverse $L \colon \mathcal{C} \rightarrow (\mathcal{C}_{\perp_{\mathcal{C}}/})_{/\ledge_{\mathcal{C}}}$.
\end{lem}

\begin{proof}
Since $\mathcal{C}$ has a chosen initial object, the projection $d_\perp \colon \mathcal{C}_{\perp_{\mathcal{C}}/} \rightarrow \mathcal{C}$ is an equivalence, and therefore an isomorphism since 
$\mathcal{C}$ and $\mathcal{C}_{\perp_{\mathcal{C}}/}$ are discrete. The map $p_{\perp_{\mathcal{C}}} \colon  \mathcal{C} \rightarrow \mathcal{C}_{\perp_{\mathcal{C}}/}$ denotes the inverse of $d_\perp$. The object $\ledge_\mathcal{C}$ is terminal in $\mathcal{C}_{\perp_{\mathcal{C}}/}$ since it is chosen terminal in $\mathcal{C}$. This implies that the projection $d_\top \colon (\mathcal{C}_{\perp_{\mathcal{C}}/})_{/\ledge_{\mathcal{C}}} \rightarrow \mathcal{C}_{\perp_{\mathcal{C}}/}$ 
has an inverse $p_{\ledge_{\mathcal{C}}} \colon  \mathcal{C}_{\perp_{\mathcal{C}}/} \rightarrow (\mathcal{C}_{\perp_{\mathcal{C}}/})_{/\ledge_{\mathcal{C}}}$ since $d_\top$ is an equivalence between discrete algebraic intervals. So we define $L$ as the composite
\[\begin{tikzcd}
	{\mathcal{C}} & {\mathcal{C}_{\perp_\mathcal{C}/}} & {(\mathcal{C}_{\perp_{\mathcal{C}}/})_{/ \ledge_{\mathcal{C}}}}.
	\arrow["{p_{\perp_\mathcal{C}}}", from=1-1, to=1-2]
	\arrow["{p_{\ledge_\mathcal{C}}}", from=1-2, to=1-3]
\end{tikzcd}\]
Since $\mathcal{C}$ is a discrete algebraic interval, $u$ and $v$ are level-wise injective on objects. This implies that $\pi_m$ is equal to $d_\top \circ d_\perp$. Since $p_{\ledge_{\mathcal{C}}}$ and $p_{\perp_{\mathcal{C}}}$ are inverse of $d_\top$ and $d_\perp$, it follows that $L \circ \pi_m = \identity_{(\mathcal{C}_{\perp_{\mathcal{C}}/})_{/\ledge_{\mathcal{C}}}}$ and $\pi_m \circ L = \identity_{\mathcal{C}}$.
\end{proof}

Recall that the class of culf maps can also be characterised as the class
right orthogonal to the active maps (between representables). That is, $X 
\to Y$ is culf if and only if for every active map $p \colon [m] \actto [n]$ 
and every commutative square
\[
\begin{tikzcd}
\Delta^m \ar[d, -act, "p"'] \ar[r] & X \ar[d]  \\
\Delta^n \ar[r] \ar[ru, dotted, "\exists!"]& Y
\end{tikzcd}
\]
there is a unique filler.
Usually this is about homotopy commutative squares and a contractible 
space of lifts, but for strict culf, it is actually about strictly 
commutative squares and truly unique lifts.

\begin{propo}\label{3existencecell}
  For any $n$-simplex $\lambda \colon \Delta^n \to X$ with long edge 
  $f$, there is a unique lift $\phi_\lambda$ for the square
\[
\begin{tikzcd}
\Delta^1 \ar[d, -act] \ar[r, -act] & I_f \ar[d, "\mfunctor_f"]  \\
\Delta^n \ar[r, "\lambda"'] \ar[ru, dotted, "\phi_\lambda"]& X .
\end{tikzcd}
\]
\end{propo}

\begin{proof}
The square commutes by Lemma \ref{missing 3existencecell}. Indeed the
composite $\Delta^1 \to \Delta^n \to X$ is equal to $f$ since
$f$ was defined as the long edge, and $\Delta^1 \to I_f \to X$
is equal to $f$ by Lemma~\ref{missing 3existencecell}. Furthermore, since $\Delta^1 \to \Delta^n$ is active and $\mfunctor_f$ is strict culf, we have a unique filler which we denote as $\phi_\lambda$.
\end{proof}

\begin{lem}\label{isoimageculf}
Let $G \colon X \rightarrow Y$ be a simplicial map between rigid decomposition groupoids. For $f \in X_1$, there is a unique stretched map $G_f$ fitting into the commutative diagram 
\[\begin{tikzcd}
	{I_f} & {I_{Gf}} \\
	X & Y.
	\arrow["{\mfunctor_f}"', from=1-1, to=2-1]
	\arrow["{\mfunctor_{Gf}}", from=1-2, to=2-2]
	\arrow["G"', from=2-1, to=2-2]
	\arrow[-act, "{G_f}", from=1-1, to=1-2]
	\arrow["{(1)}"{description}, draw=none, from=1-1, to=2-2]
\end{tikzcd}\]
If $G$ is strict culf then $G_f$ is an isomorphism.
\end{lem}

\begin{proof}
In the diagram
\[\begin{tikzcd}
	{I_f} & 1 \\
	{\Dec_{\top} \Dec_\perp X} & {\delta (X_1)} \\
	{\Dec_{\top} \Dec_\perp Y} & {\delta (Y_1)}
	\arrow["{\ulcorner f \urcorner}", from=1-2, to=2-2]
	\arrow[from=1-1, to=1-2]
	\arrow["w"', from=1-1, to=2-1]
	\arrow["{\pilong}"', from=2-1, to=2-2]
	\arrow["{(2)}"{description}, draw=none, from=1-1, to=2-2]
	\arrow["{\Dec_\top \Dec_\perp G}"', from=2-1, to=3-1]
	\arrow["{\delta (G_1)}", from=2-2, to=3-2]
	\arrow["{\pilong}"', from=3-1, to=3-2]
	\arrow["{(3)}"{description}, draw=none, from=2-1, to=3-2]
\end{tikzcd}\]
the square $(2)$ is a strict pullback by construction of $I_f$, and $(3)$ commutes since $\pilong$ is a natural transformation. Combining $(2)$ and $(3)$, we have that the outer diagram commutes. By the pullback property of $I_{Gf}$, we have a unique map $G_f \colon I_f \rightarrow I_{Gf}$ fitting into a commutative diagram
\[\begin{tikzcd}
	{I_f} \\
	{\Dec_\top \Dec_\perp X} & {I_{Gf}} & 1 \\
	X & {\Dec_\top \Dec_\perp Y} & {\delta(Y_1)} \\
	& Y
	\arrow["{\ulcorner Gf \urcorner}", from=2-3, to=3-3]
	\arrow["{w'}"', from=2-2, to=3-2]
	\arrow[from=2-2, to=2-3]
	\arrow["{\pilong}"', from=3-2, to=3-3]
	\arrow["w"', from=1-1, to=2-1]
	\arrow["{\Dec_\top \Dec_\perp G}"{description}, from=2-1, to=3-2]
	\arrow[bend left=15, from=1-1, to=2-3]
	\arrow["{G_f}"', from=1-1, to=2-2]
	\arrow["\epsilon"', from=2-1, to=3-1]
	\arrow["{\epsilon'}", from=3-2, to=4-2]
	\arrow["G"', from=3-1, to=4-2]
	\arrow["{(4)}"{description}, draw=none, from=1-1, to=3-2]
	\arrow["{(5)}"{description}, draw=none, from=2-1, to=4-2]
\end{tikzcd}\]
Combining $(4)$ and $(5)$, we get that $(1)$ commutes. This implies that $\mfunctor_{Gf}G_f(\ledge_{f}) = Gf$. Since $X$ and $Y$ are rigid, the maps $w$ and $w'$ are level-wise injective on objects. The functor $G_f$ is described as follows: for an $n$-simplex $\lambda$ in $I_f$, we have that $(G_f)_n(\lambda) = G_{n+2}(\lambda)$. This description is possible since we work with strict pullbacks, $w$ is level-wise injective on objects and $(1)$ commutes. This implies that $w(\lambda)$ is the same $\lambda$ but interpreted as an $(n+2)$-simplex in $X$. Using this description of $I_f$ it is immediate to see that $G_f(s_0(f)) = s_0(Gf)$ and $G_f(s_1(f)) = s_1(Gf)$, this means that $G_f$ sends the chosen initial and terminal objects of $I_f$ to the chosen initial and terminal objects of $I_{Gf}$. In other words, $G_f$ is stretched.
 
Recall that the chosen edge $\ledge_{Gf} \colon \perp_{I_{Gf}} \rightarrow \top_{I_{Gf}}$ of $I_f$, satisfying that $\mfunctor_{Gf} \ledge_{Gf} = Gf$.  
Applying Proposition \ref{3existencecell} to the map $Gf$, we have a unique stretched map $\phi_{Gf}$ satisfies $\mfunctor_{Gf} \phi_{Gf} = Gf$. But as shown above, $G_f(\ledge_{f})$ and $\ledge_{Gf}$ also satisfy this condition. This implies that $G_f(\ledge_{f}) = \ledge_{Gf}$. 
Furthermore, if $G$ is strict culf, $(3)$ is a strict pullback by Lemma \ref{lemma: pi longt is strict cartesian}. Therefore, combining $(2)$ and $(3)$, we have that $I_f$ is the strict pullback of $\pilong \colon \Dec_\top \Dec_\perp Y \rightarrow \delta (Y_1)$ along $\ulcorner Gf \urcorner \colon 1 \rightarrow \delta (Y_1)$. But this is precisely the definition of $I_{Gf}$. Since $I_f$ and $I_{Gf}$ are pullbacks over the same diagram, it follows that $I_f \cong I_{Gf}$.  Furthermore, this isomorphism is given by $G_f$ since the squares $(3)$ and $(4)$ commute.
\end{proof}

\begin{rema}\label{rema:transitivity 3.20}
The uniqueness of Lemma~\ref{isoimageculf} immediately implies the following
  `transitivity' property of the construction $G \mapsto G_f$: Given
  \[
  \begin{tikzcd}[column sep={5em,between origins}]
  I_f \ar[d, "\mfunctor_f"] \ar[r, "G_f"] & I_{Gf} \ar[d, "\mfunctor_{Gf}"] \ar[r, 
  "H_{Gf}"] & I_{HGf} \ar[d, "\mfunctor_{HGf}"]  \\
  X \ar[r, "G"'] & Y \ar[r, "H"'] & Z,
  \end{tikzcd}
  \]
  we have
  $$
  (H\circ G)_f = H_{Gf} \circ G_f .
  $$
\end{rema}

\section{Stretched-culf factorisation system}\label{sec:Factorisation systems}


\noindent A \emph{factorisation system} in a category $\mathcal{D}$ consists of two classes $E$ and $F$ of maps, that we shall depict as $\twoheadrightarrow$ and $\rightarrowtail$, such that 

\begin{enumerate}
\item The class $F$ is closed under isomorphisms.
\item The classes $E$ and $F$ are orthogonal, $E \perp F$. That is, given $e \in E$ and $f \in F$, for every solid square

\begin{center}
\begin{tikzcd}
. \arrow[r] \arrow[d, "e"', two heads] & . \arrow[d, "f", tail] \\
. \arrow[r] \arrow[ru, dashed]         & .                     
\end{tikzcd}
\end{center}

there is a unique filler.
\item Every map $h$ admits a factorisation
\begin{center}
\begin{tikzcd}
. \arrow[rr, "h"] \arrow[rd, "e"', two heads] &                          & . \\
                                              & . \arrow[ru, "f"', tail] &  
\end{tikzcd}
\end{center}
with $e \in E$ and $f \in F$.
\end{enumerate}

\begin{rema}
The classical notion of orthogonal factorisation system requires that $E$ be closed under isomorphism. In our case it is not required. In case $E$ is not closed under isomorphism we can always saturate it.
\end{rema}

\noindent Let $\Ar^E(\mathcal{D}) \subset \Ar(\mathcal{D})$ denote the full subcategory spanned by the arrows in the left-hand class $E$.

\begin{lem}\cite[Lemma 1.3]{GTK3} \label{cartesiansquares}
The domain projection $\Ar^{E}(\mathcal{D}) \rightarrow \mathcal{D}$ is a cartesian fibration. The cartesian arrows in $\Ar^{E}(\mathcal{D})$ are given by squares of the form
\begin{center}
\begin{tikzcd}
. \arrow[r] \arrow[d, two heads] & . \arrow[d, two heads] \\
. \arrow[r,tail]         & .                     
\end{tikzcd}
\end{center}
\end{lem}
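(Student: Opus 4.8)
The plan is to prove directly that the domain projection $p : \Ar^E(\mathcal{D}) \to \mathcal{D}$ is a Grothendieck (cartesian) fibration by building cartesian lifts out of the factorisation system, and to read off the cartesian arrows in the process. Recall that an object of $\Ar^E(\mathcal{D})$ is a map $e \in E$, a morphism from $e\colon a \to b$ to $e'\colon a' \to b'$ is a commuting square with top $w\colon a \to a'$, bottom $v\colon b \to b'$, and verticals $e,e' \in E$, and $p$ records the top arrow $w$. Note that orthogonality $E\perp F$ here means that each lifting problem has a \emph{unique} diagonal filler (a contractible set is a singleton), which is the one tool I will use repeatedly.

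First I would establish the key step: \emph{any square $\phi$ whose bottom arrow $v$ lies in $F$ is $p$-cartesian}. Let $w$ be its top, so $v e = e' w$. Given a test object $\tilde e\colon \tilde a \to \tilde b$ in $E$, a morphism $\psi$ into $(e')$ with top $s$ and bottom $t$ (so $e' s = t\tilde e$), and a factorisation $s = w\circ r$ with $r\colon \tilde a \to a$, the desired unique lift $\chi$ over $r$ amounts to a unique bottom arrow $q\colon \tilde b \to b$ solving $q\tilde e = e r$ and $v q = t$. These two equations are exactly a diagonal filler for the commuting square with top $e r$, bottom $t$, left $\tilde e \in E$, and right $v \in F$; commutativity $v e r = e' w r = e' s = t\tilde e$ uses only that $\phi,\psi$ are squares and $s = wr$. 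Orthogonality $E\perp F$ supplies the unique $q$, and with it the unique $\chi$, giving the universal property.

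Next I would produce the cartesian lifts that make $p$ a fibration. Given a target $e'\colon a' \to b'$ and any base arrow $w\colon a \to a'$ in $\mathcal{D}$, factor the composite $e'\circ w = f\circ e$ with $e \in E$ and $f \in F$. The resulting square (top $w$, bottom $f$, verticals $e,e'$) is a morphism of $\Ar^E(\mathcal{D})$ lying over $w$, and it is $p$-cartesian by the previous paragraph since $f \in F$. Hence every arrow of $\mathcal{D}$ into $p(e')$ admits a cartesian lift, so $p$ is a cartesian fibration, and the canonical lifts so constructed are precisely squares with bottom in $F$.

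Finally I would settle the converse: \emph{every $p$-cartesian arrow has bottom in $F$}. Given a cartesian $\phi$ with top $w$ and bottom $v$, compare it with the canonical lift $\phi_0$ over the same $w$ built above, whose bottom $f_0$ lies in $F$. Two cartesian arrows over the same base map with the same target are joined by a unique vertical isomorphism $\theta$ over $\identity$ with $\phi_0\theta = \phi$; its bottom component $\theta_b$ is then an isomorphism and $v = f_0\circ\theta_b$. Since $F$ is closed under equivalences (axiom 1), $v \in F$. Together with the first step this identifies the cartesian arrows exactly as the squares with bottom in $F$. I would flag this converse as the only delicate point, since it is the one place relying on uniqueness of cartesian lifts up to a unique vertical isomorphism together with the closure of $F$ under equivalence; every other step is a direct application of the unique fillers coming from $E\perp F$.
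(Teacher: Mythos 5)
Your proof is correct and is the standard orthogonality argument; the paper itself gives no proof of this lemma (it is quoted from \cite{GTK3}, Lemma~1.3), and your three steps --- unique fillers from $E\perp F$ give cartesianness of squares with bottom in $F$, the $(E,F)$-factorisation of $e'\circ w$ gives existence of cartesian lifts, and comparison of cartesian lifts plus closure of $F$ under equivalence gives the converse --- are exactly what that reference's argument amounts to in the $1$-categorical setting used here. No gaps.
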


Let $\Interval$ be the category whose objects are discrete algebraic intervals and whose morphisms are functors. We need some preliminary results to prove that the stretched functors as left-hand class and the culf functors as right-hand class form a factorisation system in $\Interval$.

\begin{lem}\label{stretchedtriangle}
Consider the following commutative diagram of simplicial maps
\[\begin{tikzcd}
	& {\mathcal{B}} \\
	{\mathcal{A}} && {\mathcal{C}}.
	\arrow["F"', from=2-1, to=2-3]
	\arrow["S", -act, from=2-1, to=1-2]
	\arrow["G", from=1-2, to=2-3]
\end{tikzcd}\]
where $\mathcal{A}$, $\mathcal{B}$ and $\mathcal{C}$ are discrete algebraic intervals, and $S$ is stretched. Then $F$ is stretched if and only if $G$ is stretched. 
\end{lem}

\begin{proof}
Let $\ledge_{\mathcal{A}} \colon \perp_\mathcal{A} \rightarrow \top_{\mathcal{A}}$ be the unique map from $\perp_\mathcal{A}$ to $\top_\mathcal{A}$.
Let  $\ledge_{\mathcal{B}} \colon \perp_\mathcal{B} \rightarrow \top_{\mathcal{B}}$ and $\ledge_{\mathcal{C}} \colon \perp_\mathcal{C} \rightarrow \top_{\mathcal{C}}$ be the unique maps in $\mathcal{B}$ and $\mathcal{C}$. It is obvious that $F$ is stretched when $G$ is stretched. For the other direction, suppose $F$ stretched. We have that 
\begin{align*}
G(\ledge_{\mathcal{B}})& = G(S(\ledge_{\mathcal{A}}))& \tag{since $S$ is stretched} \\
& =  F(\ledge_{\mathcal{A}}) & \tag{since $F = GS$} \\
& = \ledge_{\mathcal{C}}. & \tag{since $F$ is stretched}
\end{align*}
This means that $G$ is stretched.
\end{proof}

\begin{lem}\label{lemma: mfunctor is invertible}
Let $\mathcal{C}$ be a discrete algebraic interval with long edge $\ledge$. The simplicial map $\mfunctor_{\ledge_{\mathcal{C}}} \colon I_{\ledge} \rightarrow \mathcal{C}$ has an inverse $W \colon \mathcal{C} \rightarrow I_{\ledge}$.
\end{lem}

\begin{proof}
Since $\mathcal{C}$ is a discrete algebraic interval, we have a map $L \colon \mathcal{C} \rightarrow (\mathcal{C}_{\perp_\mathcal{C}/})_{/ \ledge}$ by Lemma \ref{lemma: Lavw constr}. Recall that for an $n$-simplex $\lambda$ in $\mathcal{C}_n$, the $n$-simplex $L(\lambda)$ satisfies that $\longt(L(\lambda)) = s_\top s_\perp \ledge_{\mathcal{C}}$ and $d_\top d_\perp L(\lambda) = \lambda$. Consider the canonical projections $u \colon (\mathcal{C}_{\perp_\mathcal{C}/})_{/ \ledge} \rightarrow \Dec_\top \mathcal{C}_{\perp_\mathcal{C}/}$ and $v \colon \mathcal{C}_{\perp_\mathcal{C}/} \rightarrow \Dec_\perp \mathcal{C}$. Since $\mathcal{C}$ is a discrete algebraic interval, the canonical projections are level-wise injective on objects. So it is straightforward to check that $\pilong( \Dec_\top v \circ u \circ L(\lambda)) = \ledge_\mathcal{C}$. Therefore, the outer diagram
\[\begin{tikzcd}
	\mathcal{C} \\
	& {I_{\ledge}} & 1 \\
	& {\Dec_\top \Dec_\perp \mathcal{C}} & {\delta (\mathcal{C}_1)}
	\arrow["{\ulcorner \ledge\urcorner }", from=2-3, to=3-3]
	\arrow["w", from=2-2, to=3-2]
	\arrow[from=2-2, to=2-3]
	\arrow["{\pilong}"', from=3-2, to=3-3]
	\arrow[from=1-1, to=2-3]
	\arrow["W"', dashed, from=1-1, to=2-2]
	\arrow["{\Dec_{\top} v\circ u \circ L}"{description}, bend right=20, from=1-1, to=3-2]
\end{tikzcd}\]
commutes. By the pullback property of $I_{\ledge}$, we have a unique map $W \colon \mathcal{C} \rightarrow I_{\ledge}$ such that the diagram commutes. Informally, for an $n$-simplex $\lambda$ in $\mathcal{C}_n$, the map $W$ only adds to $\lambda$ the chosen initial edge $\perp_{\mathcal{C}} \rightarrow d_\top(\longt(\lambda))$ by precomposing and the chosen terminal edge $d_\perp(\longt(\lambda)) \rightarrow \top_{\mathcal{C}}$ by postcomposing. Since $w \circ W = {\Dec_{\top} v\circ u \circ L}$ and $d_\top d_\perp L(\lambda) = \lambda$, we have that $\mfunctor_{\ledge} \circ W (\lambda) = \lambda$. By analogous arguments we have that  $W \circ \mfunctor_{\ledge} = \identity_{I_{\ledge}}$.   
\end{proof}

\setcounter{equation}{0}
\begin{lem}\label{lemma: culf are like mono}
 Let $\mathcal{A}$ and $\mathcal{B}$ be discrete algebraic intervals, and let $X$ be a rigid decomposition groupoid. Suppose we have a fork diagram
  $$
  \begin{tikzcd}
  \mathcal{A} \ar[r, shift left, "V"] \ar[r, shift right, "W"'] & {\mathcal{B}} \ar[r, "F"] & X  
  \end{tikzcd}
  $$
(meaning $FV=FW$) where $V$ and $W$ are stretched and $F$ is strict culf. Then already $V=W$.
\end{lem}

\begin{proof}
Let $\varpi_{\mathcal{A}}$ denote the long edge of $\mathcal{A}$ and $\varpi_{\mathcal{B}}$ the 
  long edge of ${\mathcal{B}}$, as usual. Since $V$ and $W$ are stretched, we have
  $V(\varpi_{\mathcal{A}}) = \varpi_{\mathcal{B}} = W(\varpi_{\mathcal{A}})$, so the following diagram is well 
  formed from applying the construction of Lemma~\ref{isoimageculf} (for 
  composable maps as in Remark~\ref{rema:transitivity 3.20}):
  \[
  \begin{tikzcd}[column sep={6em,between origins}, row sep={4em,between 
	origins}]
  I_{\varpi_{\mathcal{A}}} 
	\ar[d, "\mfunctor_{\varpi_{\mathcal{A}}}"'] 
	\ar[r, shift left, "V_{\varpi_{\mathcal{A}}}"] 
	\ar[r, shift right, "W_{\varpi_{\mathcal{A}}}"']
  & I_{\varpi_{\mathcal{B}}} 
	\ar[d, "\mfunctor_{\varpi_{\mathcal{B}}}"] 
	\ar[r, "F_{\varpi_{\mathcal{B}}}"] 
  & I_{F\varpi_{\mathcal{B}}} 
	\ar[d, "\mfunctor_{F\varpi_{\mathcal{B}}}"]  \\
  \mathcal{A} 
	\ar[r, shift left, "V"] 
	\ar[r, shift right, "W"'] 
  & {\mathcal{B}} \ar[r, "F"'] 
  & X .
  \end{tikzcd}
  \]
  Since $FV=FW$, we also have $F_{\varpi_{\mathcal{B}}} V_{\varpi_{\mathcal{A}}} = F_{\varpi_{\mathcal{B}}} 
  W_{\varpi_{\mathcal{A}}}$. This is a consequence of the uniqueness statement in 
  Lemma~\ref{isoimageculf} as in Remark~\ref{rema:transitivity 3.20}. But since $F$ is strict culf,
  the map $F_{\varpi_{\mathcal{B}}}$ is an isomorphism by Lemma~\ref{isoimageculf}. It follows 
  that $V_{\varpi_{\mathcal{A}}} = W_{\varpi_{\mathcal{A}}}$. Finally, since ${\mathcal{A}}$ and ${\mathcal{B}}$ are 
  discrete algebraic intervals and $\varpi_{\mathcal{A}}$ and $\varpi_{\mathcal{B}}$ are their long
  edges, it follows from Lemma~\ref{lemma: mfunctor is invertible} that the two vertical maps 
  $\mfunctor_{\varpi_{\mathcal{A}}}$ and $\mfunctor_{\varpi_{\mathcal{B}}}$ are isomorphisms, and this implies 
  that $V=W$.
\end{proof}

\setcounter{equation}{0}

\begin{lem}\label{stretculffactarrow}
Let $\mathcal{C}$ and $\mathcal{D}$ be discrete algebraic intervals. Let $F \colon \mathcal{C} \rightarrow \mathcal{D}$ be a simplicial map. Then $F$ admits a stretched-culf factorisation. 
\end{lem}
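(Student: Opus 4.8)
The plan is to factorise $F$ through the interval obtained by applying the interval construction to the image of the distinguished edge of $\mathcal{C}$. Set $\mathcal{E} := I_{F\ledge_{\mathcal{C}}}$, the interval associated to the edge $F\ledge_{\mathcal{C}} \in \mathcal{D}_1$. By Lemma~\ref{existenceterminalIF} this is again an interval, and by its construction in Definition~\ref{definitionthenicalinterval} the projection $G := \mfunctor_{F\ledge_{\mathcal{C}}} : \mathcal{E} \to \mathcal{D}$ is CULF. This supplies the right-hand leg of the factorisation, irrespective of whether $F$ itself is CULF; it remains to build a stretched map $S : \mathcal{C} \to \mathcal{E}$ with $G \circ S = F$.

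To construct $S$, I would first invoke the canonical isomorphism $W : \mathcal{C} \to I_{\ledge_{\mathcal{C}}}$ of Lemma~\ref{intervalisocat}, which is the inverse of the stretched map $\mfunctor_{\ledge_{\mathcal{C}}}$ (Example~\ref{exaMstretched}). Next, I observe that $F$ induces a simplicial map $\widetilde{F} : I_{\ledge_{\mathcal{C}}} \to I_{F\ledge_{\mathcal{C}}}$. Indeed, applying the double decalage to $F$ gives $\Dec_{\top}\Dec_{\perp} F : \Dec_{\top}\Dec_{\perp}\mathcal{C} \to \Dec_{\top}\Dec_{\perp}\mathcal{D}$, and since $F$ is simplicial it commutes with the long-edge map $\ell = \longt$, so that $\longt(F\sigma) = F(\longt\,\sigma)$ for every $\sigma$. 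Hence $\Dec_{\top}\Dec_{\perp}F$ carries the fibre over $\ledge_{\mathcal{C}}$ into the fibre over $F\ledge_{\mathcal{C}}$, and $\widetilde{F}$ is the induced map on these fibres. I then set $S := \widetilde{F} \circ W$, which is automatically simplicial.

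To verify $G \circ S = F$, I would check that the square with top edge $\widetilde{F}$, bottom edge $F$, and vertical maps $\mfunctor_{\ledge_{\mathcal{C}}}$ and $\mfunctor_{F\ledge_{\mathcal{C}}}$ commutes. This holds because $\mfunctor = \epsilon \circ w$, the projection $\epsilon = d_\perp d_\top$ commutes with $F$, and $w$ commutes with $\widetilde{F}$ by construction; thus $\mfunctor_{F\ledge_{\mathcal{C}}} \circ \widetilde{F} = F \circ \mfunctor_{\ledge_{\mathcal{C}}}$. Composing on the right with $W$ and using $\mfunctor_{\ledge_{\mathcal{C}}} \circ W = \identity_{\mathcal{C}}$ from Lemma~\ref{intervalisocat} yields $G \circ S = F \circ \mfunctor_{\ledge_{\mathcal{C}}} \circ W = F$.

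Finally, $S$ is stretched because both factors are, and composites of stretched maps are stretched. The map $W$ is the inverse of the stretched isomorphism $\mfunctor_{\ledge_{\mathcal{C}}}$, and the inverse of a stretched isomorphism again preserves the chosen initial and terminal data and the edge $\ledge$. For $\widetilde{F}$, the chosen initial object, chosen terminal object, and distinguished edge of $I_g$ are $s_0(g)$, $s_1(g)$, and $g$; since $\widetilde{F}$ is induced by $F$, which commutes with the degeneracies $s_0,s_1$ and sends $\ledge_{\mathcal{C}}$ to $F\ledge_{\mathcal{C}}$, it preserves all of this structure. The main obstacle is precisely this last verification: one must confirm that $\widetilde{F}$ respects the \emph{full} interval structure, not merely the objects $\perp$ and $\top$ but also the chosen sections and the coincidence of the two induced maps $\perp \to \top$, which amounts to carefully tracking the decalage degeneracies $s_{s_\perp}, s_{s_\top}$ through the construction of $\widetilde{F}$.
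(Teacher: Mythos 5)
Your proposal is correct and follows essentially the same route as the paper: factor through $I_{F\ledge_{\mathcal{C}}}$, using the inverse $W$ of $\mfunctor_{\ledge_{\mathcal{C}}}$ from Lemma~\ref{intervalisocat} composed with the map $I_{\ledge_{\mathcal{C}}}\to I_{F\ledge_{\mathcal{C}}}$ induced by $F$ (which the paper builds levelwise from the pullback property of $(I_{F\ledge_{\mathcal{C}}})_n$ — the same map you obtain from the fibres of $\Dec_\top\Dec_\perp F$). The verification of commutativity and of stretchedness also matches the paper's argument.
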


\begin{proof}
Let $\ledge_\mathcal{C}$ be the long $1$-simplex of the interval $\mathcal{C}$. By Lemma \ref{isoimageculf}, we have a stretched map $F_{\ledge_\mathcal{C}} \colon I_{\ledge_\mathcal{C}} \rightarrow I_{F{\ledge_\mathcal{C}}}$ fitting into the commutative diagram  
\begin{center}
\begin{tikzcd}
I_{\ledge_\mathcal{C}} \arrow[d, "\mfunctor_{\ledge_\mathcal{C}}"'] \arrow[r, -act, "F_{\ledge_\mathcal{C}}"]                    &  I_{F{\ledge_\mathcal{C}}} \arrow[d, "\mfunctor_{F{\ledge_\mathcal{C}}}"]   \\
\mathcal{C} \arrow[r, "F"']  & \mathcal{D}.               
\end{tikzcd}
\end{center}
Recall that the vertical arrows are strict culf. The map $\mfunctor_{\ledge_\mathcal{C}}$ is stretched by Example \ref{example: Mstretched}. Since $\mathcal{C}$ is a discrete algebraic interval, we have that $\mfunctor_{\ledge_\mathcal{C}}$ is invertible by Lemma \ref{lemma: mfunctor is invertible}. From the diagram
\[\begin{tikzcd}
	{I_{\ledge_\mathcal{C}}} && {I_{\ledge_\mathcal{C}}} \\
	& {\mathcal{C},}
	\arrow["{\identity_{I_{\ledge_\mathcal{C}}}}", -act, from=1-1, to=1-3]
	\arrow["\mfunctor_{\ledge_\mathcal{C}}"', -act,  from=1-1, to=2-2]
	\arrow["{\mfunctor_{\ledge_\mathcal{C}}^{-1}}"', from=2-2, to=1-3]
\end{tikzcd}\]
it follows that $\mfunctor^{-1}_{\ledge_\mathcal{C}}$ is also stretched, by Lemma \ref{stretchedtriangle}. So altogether, the diagram
\[\begin{tikzcd}
	{\mathcal{C}} && {\mathcal{D}} \\
	& {I_{F{\ledge_\mathcal{C}}}}
	\arrow["F", from=1-1, to=1-3]
	\arrow[-act, "{F_{\ledge_\mathcal{C}} \circ \mfunctor^{-1}_{\ledge_\mathcal{C}}}"', from=1-1, to=2-2]
	\arrow["{\mfunctor_{F{\ledge_\mathcal{C}}}}"', from=2-2, to=1-3]
\end{tikzcd}\]
commutes, where the map $F_{\ledge_\mathcal{C}} \circ \mfunctor^{-1}_{\ledge_\mathcal{C}}$ is stretched and $\mfunctor_{F{\ledge_\mathcal{C}}}$ is culf.
\end{proof}

\setcounter{equation}{0}

\begin{lem}\label{ortoghonalfactsystem}
Let $\mathcal{E}, \mathcal{E'}$ and $\mathcal{C}$  be discrete algebraic intervals. Let $X$ be a rigid decomposition groupoid. For the commutative square of simplicial maps
\begin{center}
\begin{tikzcd}
\mathcal{E} \arrow[r, "G"] \arrow[d, -act, "S"'] & \mathcal{C} \arrow[d, "F"] \\
\mathcal{E'} \arrow[r, "H"'] \arrow[ru, dashed]         & X                    
\end{tikzcd}
\end{center}
where $S\colon \mathcal{E} \actto \mathcal{E}'$ is stretched and $F \colon \mathcal{C} \rightarrow X$ is strict culf, there is a unique filler.
\end{lem}

\begin{proof}
We will first construct a filler $L \colon \mathcal{E}' \rightarrow {\mathcal{C}}$ and then prove it is unique.
For each $n$-simplex $\lambda \colon \Delta^n \rightarrow \mathcal{E}'$, Lemma \ref{existssimplecesinterval} gives an $(n+2)$-simplex  $\eta_\lambda \colon \Delta^{n+2} \actto \mathcal{E}'$
such that
\begin{equation}\label{eq2propofac}
d_\bot d_\top (\eta_\lambda) = \lambda.
\end{equation}
and
\begin{equation}\label{eq1propofac}
\longt(\eta_\lambda) = \ledge_{\mathcal{E}'}
\end{equation} 
We assumed that $S$ is stretched, so $S(\ledge_{\mathcal{E}}) = \ledge_{\mathcal{E}'}$. 
This together with the equation $HS = FG$ and the stretched condition of $S$ are used in the following calculation:
$$\longt(H(\eta_\lambda)) = H (\longt(\eta_\lambda)) = H(\ledge_{\mathcal{E}'}) = H(S(\ledge_{\mathcal{E}})) = F(G(\ledge_{\mathcal{E}})).$$  
In other words, the outer diagram 
\begin{center}
\begin{tikzcd}
1 \arrow[rdd, "H(\eta_\lambda)"', bend right] \arrow[rrd, "\identity", bend left] \arrow[white]{rdd}[black, description]{(3)} \arrow[dr, dotted, "\overline{H\eta_{\lambda}}" description]&                                                                                                                          &                                      \\
                                                                               & (I_{FG\ledge_{\mathcal{E}}})_n \arrow[d, "w_n'"] \arrow[r]  \drpullback & 1 \arrow[d, "FG\ledge_{\mathcal{E}}"] \\
                                                                               & X_{n+2} \arrow[r, "\longt"']                                                                                     & X_1                       
\end{tikzcd}
\end{center}
commutes. The pullback property of $I_{FG\ledge_{\mathcal{E}}}$ gives the dotted map $\overline{H\eta_\lambda} \colon \Delta^n \actto I_{FG\ledge_{\mathcal{E}}}$ such that the diagram commutes. We define the map $V \colon \mathcal{E}' \rightarrow I_{FG\ledge_\mathcal{E}}$ by $V(\lambda) = \overline{H(\eta_\lambda)}$, for each $n$-simplex $\lambda \colon \Delta^n \rightarrow \mathcal{E}'$. It is straightforward to check that $V$ is a simplicial map. Furthermore,
\begin{align*}
\mfunctor_{FG\ledge_{\mathcal{E}}} V (\lambda)  & =   d_\bot d_\top w_n' \overline{H(\eta_\lambda)} & \tag{\mbox{by def. of $\mfunctor_{FG\ledge_{\mathcal{E}}}$  and $V$}} \\
& =  d_\bot d_\top H(\eta_\lambda) & \tag{\mbox{by triangle (3)}}  \\
& =  H d_\bot d_\top(\eta_\lambda)  & \tag{\mbox{since $H$ is a sim. map}} \\
& =  H (\lambda). & \tag{\mbox{by Eq.~(\ref{eq2propofac})}}
\end{align*}
This means that the following diagram commutes
\[\begin{tikzcd}
	& {I_{FG\ledge_{\mathcal{E}}}} \\
	{\mathcal{E}'} && {X.}
	\arrow[""{name=0, anchor=center, inner sep=0}, "H"', from=2-1, to=2-3]
	\arrow["V", from=2-1, to=1-2]
	\arrow["{\mfunctor_{FG\ledge_{\mathcal{E}}}}", from=1-2, to=2-3]
	\arrow["{(4)}"{description}, Rightarrow, draw=none, from=1-2, to=0]
\end{tikzcd}\]
Since $F$ is strict culf, Lemma \ref{isoimageculf} gives an isomorphism $K \colon I_{G\ledge_\mathcal{E}} \rightarrow I_{FG\ledge_\mathcal{E}}$ fitting into the commutative diagram
\[\begin{tikzcd}
	{I_{G\ledge_{\mathcal{E}}}} & {\mathcal{C}} \\
	{I_{FG\ledge_{\mathcal{E}}}} & {X.}
	\arrow["F", from=1-2, to=2-2]
	\arrow["{\mfunctor_{FG\ledge_{\mathcal{E}}}}"', from=2-1, to=2-2]
	\arrow["{\mfunctor_{G\ledge_{\mathcal{E}}}}", from=1-1, to=1-2]
	\arrow["{K^{-1}}", from=2-1, to=1-1]
	\arrow["{(5)}"{description}, draw=none, from=1-1, to=2-2]
\end{tikzcd}\]
Combining the commutativity of $(4)$ and $(5)$ gives $H = F \circ \mfunctor_{G\ledge_{\mathcal{E}}} \circ K^{-1} \circ V$, and the hypothesis that $H \circ S = F \circ G$, we have that
\begin{equation*}\tag{6}
F \circ \mfunctor_{G\ledge_{\mathcal{E}}} \circ K^{-1} \circ V \circ S = H \circ S = F \circ G.
\end{equation*}
Lemma \ref{lemma: culf are like mono} says that it is possible to cancel $F$ in Eq.~$(6)$ if $F$ is strict culf, which is given by hypothesis. Therefore, the diagram
\[\begin{tikzcd}
	& {\mathcal{E}'} & {I_{FG\ledge_\mathcal{E}}} & {I_{G\ledge_{\mathcal{E}}}} \\
	{\mathcal{E}} &&& {\mathcal{C}}
	\arrow["V", from=1-2, to=1-3]
	\arrow["G"', from=2-1, to=2-4]
	\arrow["S",-act, from=2-1, to=1-2]
	\arrow["{\mfunctor_{G\ledge_{\mathcal{E}}}}", from=1-4, to=2-4]
	\arrow["{K^{-1}}", from=1-3, to=1-4]
	\arrow["{(7)}"{description}, draw=none, from=1-2, to=2-4]
\end{tikzcd}\]
commutes. The pieces now fit together to form the commutative diagram
\begin{center}
\begin{tikzcd}
\mathcal{E} \arrow[ddd,-act, "S"'] \arrow[rr, "G"]  \arrow[white]{rdd}[black, description]{(7)}   &                                   & \mathcal{C} \arrow[ddd, "F"] \\
                                               & I_{G\ledge_\mathcal{E}}  \arrow[ru, "\mfunctor_{G\ledge_{\mathcal{E}}}"]     \arrow[white]{rd}[black, description]{(5)}            &           {}                   \\
                                               & I_{FG\ledge_\mathcal{E}}  \arrow[rd, "\mfunctor_{FG\ledge_{\mathcal{E}}}"] \arrow[u, "K^{-1}"] \arrow[white]{d}[black, description]{(4)} &      {}                        \\
\mathcal{E}' \arrow[rr, "H"'] \arrow[ru, "V"] &                   {}                & X.                 
\end{tikzcd}
\end{center}
We define $L \colon \mathcal{E}' \rightarrow \mathcal{C} $ as $L := \mfunctor_{G\ledge_{\mathcal{E}}} \circ K^{-1} \circ V$. 
Finally we establish uniqueness, exploiting that we already have 
  existence given by the functor $L$. Suppose we have two fillers 
  \[
  \begin{tikzcd}[sep={5em,between origins}]
  \mathcal{E} \ar[r, "G"] \ar[d, -act, "S"'] & \mathcal{C} \ar[d, "F"]   \\
  \mathcal{E}' 
  \ar[r, "H"'] 
  \ar[ru, shift left, "L_1"]
  \ar[ru, shift right, "L_2"'] 
  & X.
  \end{tikzcd}
  \]
  Now factor $G$ as a 
  stretched map $G'$ followed by a strict culf map $C$,
  \[
  \begin{tikzcd}
  \mathcal{E} \ar[r, -act, "G'"] \ar[rd, "G"']& \mathcal{C}' \ar[d, "C"] \\
  & \mathcal{C}   
\end{tikzcd}
\]  
  which is possible by 
  Lemma~\ref{stretculffactarrow}. Now we can invoke existence of lifts to the situation
  \[
  \begin{tikzcd}[sep={6em,between origins}]
  \mathcal{E} \ar[r, -act, "G'"] \ar[d, -act, "S"'] & \mathcal{C}' \ar[d, "C"]   \\
  \mathcal{E}'
  \ar[ru, dotted, shift left, "L'_1"]
  \ar[ru, dotted, shift right, "L'_2"'] 
  \ar[r, shift left, "L_1"]
  \ar[r, shift right, "L_2"'] 
  & \mathcal{C}
  \end{tikzcd}
  \]
  since $S$ is stretched and $C$ is culf. This gives the existence of
  $L'_1$ and $L'_2$ as indicated, and they are stretched by Lemma~\ref{stretchedtriangle}
  since both $S$ and $G'$ are stretched. But now we are in position to
  apply Lemma~\ref{lemma: culf are like mono}: Since we have $FL_1=FL_2$ (as both are equal to
  $H$), we also have $FCL'_1 = FCL'_2$. Furthermore, since $FC$ is culf and $L'_1$
  and $L'_2$ are stretched, we conclude by Lemma~\ref{lemma: culf are like mono} that already
  $L'_1= L'_2$, and therefore also $L_1=L_2$. 
\end{proof}

\begin{rema}\label{culfconditiondiagonalortoghonal}
In Lemma \ref{ortoghonalfactsystem}, we have that the diagram
\begin{center}
\begin{tikzcd}
\mathcal{E} \arrow[r, "G"] \arrow[d, -act, "S"'] & \mathcal{C} \arrow[d, "F"] \\
\mathcal{E'} \arrow[r, "H"'] \arrow[ru, "L" description]     & X                     
\end{tikzcd}
\end{center}
commutes. By hypothesis $F$ is culf. Therefore, $L$ is a culf if and only if $H$ is culf. On the other hand, by hypothesis $S$ is stretched and applying Lemma \ref{stretchedtriangle}, we have that $L$ is  stretched if and only if $G$ is stretched.
\end{rema}

\begin{rema}\label{remaortoghonal}
If we had required $X$ to be a discrete algebraic interval, then Lemma \ref{ortoghonalfactsystem} would say that the stretched and strict culf maps are orthogonal classes of maps in the category of discrete algebraic intervals and simplicial maps, as exploited in the following proposition. It will be important later in \ref{subsec:The complete decomposition groupoid $U_X$} that we allow $X$ to be more general than just a discrete algebraic interval.
\end{rema}

\begin{propo}\label{propofactsystemInt}
The stretched maps as left-hand class and the strict culf functors as right-hand class form a factorisation system in $\Interval$.
\end{propo}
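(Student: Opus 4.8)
The plan is to verify the three defining conditions of a factorisation system for the pair $(E, F)$ with $E$ the stretched maps ($\twoheadrightarrow$) and $F$ the CULF maps ($\rightarrowtail$), assembling the technical lemmas already proved rather than arguing from scratch.

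The existence of factorisations (condition 3) is already in hand: it is exactly Lemma \ref{stretculffactarrow}, where every functor $F : \mathcal{C} \to \mathcal{D}$ between intervals was factored as a stretched map into $I_{F\ledge_{\mathcal{C}}}$ followed by the CULF map $\mfunctor_{F\ledge_{\mathcal{C}}}$. So I would simply invoke it.

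For orthogonality (condition 2), the key observation I would make is that every interval is a $1$-Segal set, hence a decomposition set by Proposition \ref{segaldecomp}, and therefore a discrete decomposition groupoid. Consequently, given a stretched map and a CULF map between intervals, the hypotheses of Lemma \ref{ortoghonalfactsystem} are satisfied with $\mathcal{D}$ itself an interval, and that lemma produces a contractible (in the discrete setting, singleton) set of fillers for every solid square. This is precisely $E \perp F$, as foreseen in Remark \ref{remaortoghonal}.

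Finally, for closure of $F$ under equivalences (condition 1), I would argue that an isomorphism of simplicial sets is cartesian on every map and so is CULF, and that CULF maps compose (by pasting of pullback squares). Hence, if a map $F$ is CULF and $u, v$ are the equivalences exhibiting an equivalence in the arrow category between $F$ and another arrow $F'$, then $F' = v \circ F \circ u^{-1}$ is a composite of CULF maps and therefore CULF; alternatively one checks directly that $F'$ is cartesian on $d^1 : [1] \to [2]$ via Proposition \ref{culfcondition}, using that equivalences preserve and reflect pullbacks (Lemma \ref{lemmapullbackfibres}). The genuine content of the proposition lives in Lemmas \ref{stretculffactarrow} and \ref{ortoghonalfactsystem}; the only (mild) obstacle here is the bookkeeping in condition 2, namely recognising that an interval qualifies as a discrete decomposition groupoid so that Lemma \ref{ortoghonalfactsystem} applies with $\mathcal{D}$ an interval.
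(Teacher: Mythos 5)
Your proposal is correct and follows essentially the same route as the paper: the paper's proof simply asserts closure of CULF under equivalences, invokes Lemma \ref{stretculffactarrow} for existence of factorisations, and invokes Lemma \ref{ortoghonalfactsystem} for orthogonality (with the observation that an interval is in particular a discrete decomposition set, as anticipated in Remark \ref{remaortoghonal}). Your additional justification of condition 1 is a harmless elaboration of what the paper takes for granted.
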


\begin{proof}
The strict culf maps are closed under isomorphism. We have that every simplicial map $F$ in $\Interval$ admits a stretched-culf factorisation by Lemma \ref{stretculffactarrow}. Therefore, we only have to prove that the classes are orthogonal, which follows from Lemma \ref{ortoghonalfactsystem}.
\end{proof}

\section{The decomposition groupoid $\mathcal{U}$}
\label{sec:U}

\noindent  In Section \ref{sec:Factorisation systems}, the stretched-culf factorisation system was defined in $\Interval$, which we can use to define a fibration that encodes the pseudo-simplicial groupoid of discrete algebraic intervals.

Let $\Ar^{s}(\Interval) \subset \Ar(\Interval)$ denote the full subcategory spanned by the stretched functors. $\Ar^{s}(\Interval)$ is a cartesian fibration over $\Interval$ via the domain projection by Lemma \ref{cartesiansquares}. We now restrict this cartesian fibration to $\simplexcategory \subset \Interval$

\begin{center}
\begin{tikzcd}
\Ar^s(\Interval)_{\vert_\simplexcategory} \drpullback \arrow[r, "\fffunctor"] \arrow[d, "\dominio"'] & \Ar^s(\Interval) \arrow[d, "\dominio"] \\
\simplexcategory  \arrow[r, "\fffunctor"']                                      & \Interval.                             
\end{tikzcd}
\end{center}
We put
$$\mathcal{U} := \Ar^s(\Interval)_{\vert_\simplexcategory}.$$
$\mathcal{U} \rightarrow \simplexcategory$ is the cartesian fibration of subdivided algebraic discrete  intervals. By Lemma \ref{cartesiansquares}, the cartesian maps in $\mathcal{U}$ are squares
\begin{center}
\begin{tikzcd}
\Delta^k \arrow[r] \arrow[d, -act] & \Delta^n \arrow[d, -act] \\
\mathcal{C} \arrow[r, "\CULF"']                                      & \mathcal{D}.                           
\end{tikzcd}
\end{center}

\noindent The cartesian fibration $\mathcal{U} \rightarrow \simplexcategory$ determines a right fibration $\mathcal{U}^{\cart} \rightarrow \simplexcategory$, and hence by straightening \cite[Theorem 8.3.1]{Bor2} a simplicial groupoid
$$U \colon \simplexcategory^{\op} \rightarrow \widehat{\Grpd}$$
where $\widehat{\Grpd}$ is the 2-category of large groupoids, functors and natural transformations. 

The following result is due to Gálvez--Kock--Tonks \cite[Theorem 4.8]{GTK1}, who prove it in the more general setting of $\infty$-groupoids.

\begin{teo}\label{theoremUdecompositiongroupoid}
The simplicial groupoid $U \colon \simplexcategory^{\op} \rightarrow \widehat{\Grpd}$ is a complete decomposition groupoid.
\end{teo}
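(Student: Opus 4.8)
The plan is to unwind the straightening into an explicit description of $U$ and then verify the two defining pullback squares of Definition~\ref{definitiondecompositionspace} together with completeness. Since $U$ is the straightening of the right fibration $\mathcal{U}^{\cart}\to\simplexcategory$, the groupoid $U_n$ is the groupoid of stretched maps $\Delta^n\actto\mathcal{C}$, that is, of intervals $\mathcal{C}$ equipped with a subdivision $\perp=x_0\to x_1\to\cdots\to x_n=\top$ of the long edge $\ledge_\mathcal{C}$, with equivalences as morphisms. For $\alpha\colon[m]\to[n]$ the operator $\alpha^\ast\colon U_n\to U_m$ is computed by the cartesian-lift recipe of Lemma~\ref{cartesiansquares}: one precomposes $\sigma\colon\Delta^n\actto\mathcal{C}$ with $\Delta(\alpha)$ and keeps the stretched factor of the stretched--CULF factorisation (Proposition~\ref{propofactsystemInt}) of the composite. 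Concretely, the inner face maps compose two adjacent parts of a subdivision, the outer face maps $d_\perp$ and $d_\top$ discard the first, respectively the last, part, and the degeneracies insert an identity subdivision.

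Completeness is then immediate. As $\Delta^0$ has $\perp=\top$, a stretched map $\Delta^0\actto\mathcal{C}$ forces $\mathcal{C}$ to be trivial, so $U_0$ is contractible, while $U_1\simeq\Interval$. The degeneracy $s_0\colon U_0\to U_1$ is the inclusion of the trivial interval, whose fibre over an interval $\mathcal{C}$ is the space of equivalences from the trivial interval to $\mathcal{C}$; this is empty or contractible, so $s_0$ is a monomorphism and $U$ is complete.

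For the decomposition axiom I would check the two squares of Definition~\ref{definitiondecompositionspace} fibrewise by means of Lemma~\ref{lemmapullbackfibres}: it is enough to show that, for each subdivided interval in the bottom-left corner, the comparison map on fibres is an equivalence. Under the description above this says that a subdivision of length $n+1$ is reconstructed, uniquely and coherently, from its image under an inner face map (which deletes one subdivision point) together with its image under an outer face map (which discards an extreme part), these two images agreeing in $U_{n-1}$. The existence and uniqueness of this gluing is exactly the unique lifting of simplices in an interval (Propositions~\ref{3existencecell} and~\ref{existssimplecesinterval}) together with the orthogonality of the stretched--CULF factorisation system (Lemma~\ref{ortoghonalfactsystem}), which is what promotes the set-level reconstruction to an equivalence of fibres. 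An equivalent route is to invoke Theorem~\ref{theoremdecalage} and instead show that $\Dec_\perp U$ and $\Dec_\top U$ are Segal, by exhibiting each as a fibration over a Segal groupoid and applying Lemma~\ref{propobesegalright}.

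The main obstacle I anticipate is not the combinatorics of the reconstruction but the bookkeeping caused by $U$ being only a pseudo-simplicial groupoid: through the straightening the operators $\alpha^\ast$ are defined only up to coherent isomorphism, so the squares of Definition~\ref{definitiondecompositionspace} are homotopy-commutative and their corners agree only up to equivalence. This is precisely why I would route everything through Lemma~\ref{lemmapullbackfibres}, which tests a square by fibrewise equivalence and is therefore insensitive to the coherence data, and compute all fibres inside the strict cartesian fibration $\mathcal{U}\to\simplexcategory$, where the universal properties of the factorisation system hold on the nose. Reducing the homotopy pullbacks to these strict universal properties (Lemmas~\ref{cartesiansquares} and~\ref{ortoghonalfactsystem}) makes the remaining verification routine.
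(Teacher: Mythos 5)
First, a point of reference: the paper offers no proof of this statement at all. Theorem~\ref{theoremUdecompositiongroupoid} is quoted from G\'alvez-Carrillo--Kock--Tonks (the construction of $U$ and the proof that it is a complete decomposition space are carried out in \cite{GTK3}, even though the label here points to \cite{GTK1}), so there is no in-paper argument to compare yours against. Judged on its own terms, your outline identifies the right tools --- the explicit description of $U_n$ as subdivided intervals, the cartesian-lift recipe of Lemma~\ref{cartesiansquares}, the fibrewise pullback criterion of Lemma~\ref{lemmapullbackfibres}, and the alternative decalage route via Theorem~\ref{theoremdecalage} --- and this is indeed the shape of the argument in \cite{GTK3}. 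Your decision to compute inside the strict fibration $\mathcal{U}\to\simplexcategory$ to sidestep the pseudo-simplicial coherence is also the right instinct.

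Two steps are, however, asserted rather than proved, and as stated they do not go through. (i) Completeness: a stretched map $\Delta^0\actto\mathcal{C}$ does not by itself force $\mathcal{C}$ to be trivial; it only forces $\bot_{\mathcal{C}}=\top_{\mathcal{C}}$ and $\ledge_{\mathcal{C}}$ degenerate, and one still needs an argument (via Lemma~\ref{intervalisogrp} and the completeness hypothesis on intervals) that such a $\mathcal{C}$ is contractible. Moreover, ``$s_0$ is the inclusion of a contractible groupoid'' does not yield monomorphy on its own: the fibre of $U_0\to U_1$ over a point in the image is the set of isomorphisms in $U_1$ from the degenerate subdivided interval, so you must additionally check that this object has no nontrivial automorphisms in $U_1$. (ii) The decomposition axiom: what must be verified is not that an $(n+1)$-subdivision is ``reconstructed from an inner face together with an outer face,'' but that the fibre of $d_\perp\colon U_{n+1}\to U_n$ over a subdivided interval $\mathcal{B}$ is unchanged when $\mathcal{B}$ is replaced by an inner face $d_i\mathcal{B}$ --- i.e.\ that this fibre (the groupoid of one-step extensions of $\mathcal{B}$ below, with an identification of the upper part) depends only on the underlying interval and not on its inner subdivision. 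Propositions~\ref{3existencecell} and~\ref{existssimplecesinterval} concern simplices inside a single fixed interval and do not deliver this comparison of fibres across different ambient intervals; Lemma~\ref{ortoghonalfactsystem} gives the uniqueness of the comparison maps but not the equivalence itself. That fibre comparison is the genuinely hard part of the proof in \cite{GTK3}, and it is the part your sketch leaves open.
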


\begin{blanko}
{The complete decomposition groupoid $U_X$}
\label{subsec:The complete decomposition groupoid $U_X$}
\end{blanko}

The decomposition groupoid $U \colon \simplexcategory^{\op} \rightarrow \widehat{\Grpd}$ is not a strict
  simplicial object but only a pseudo-simplicial object. In a
  famous paper~\cite{Jardine}, Jardine figured out all the $2$-cell data
  and 17 coherence laws for pseudo-simplicial objects in terms of
  face and degeneracy maps. We overcome the difficulty of working with these coherence laws by building a local strict model,  a kind of neighbourhood $U_X \subset U$ around the discrete algebraic intervals of a given rigid decomposition groupoid $X$. 
  
From the viewpoint of cartesian fibrations, the problem with $\mathcal{U} \to \simplexcategory$ 
is that it is not split. It is not possible to define a coherent global 
choice of cartesian lifts of arrows in $\simplexcategory$. To fix this, we
restrict to a full subcategory $\mathcal{U}_X$ consisting only of the 
(subdivided) intervals of $X$ (and not even including isomorphic 
intervals). 

\begin{defi}
Let $\mathcal{U}_X$ denote the category whose objects are pairs $(f, \phi_{\lambda} \colon \Delta^n \actto I_f)$, with $f \in X_1$ and $\phi_{\lambda} \colon \Delta^n \actto
I_f$ an $n$-subdivision of the interval $I_f$. Here $\lambda \in X_n$ and $\longt(\lambda) = f$. The morphisms $F \colon (\phi_{\lambda}, f) \to (\phi_{\lambda'}, f')$ are culf maps $F \colon I_f \to I_{f'}$ such that $F(\phi_{\lambda}) = \phi_{\lambda'}$. To simplify the notation, we will write the objects of $\mathcal{U}_X$ as $(\phi_{\lambda} ,I_f)$.
\end{defi}  
  The benefit is that when everything is inside $X$, we can make 
canonical choices of cartesian lifts. They are given by the following lemma.


\setcounter{equation}{0}

\begin{lem}\label{precomposeinterval} 
  Let $X$ be a rigid decomposition groupoid, and let $p \colon \Delta^{n'} \to \Delta^n$
  be a map in $\simplexcategory$. For any $n$-simplex $\lambda \colon \Delta^n 
  \to X$, the commutative triangle
    \[
  \begin{tikzcd}[column sep={2.5em,between origins}]
  \Delta^{n'} \ar[rr, "p"] \ar[rd, "\lambda'"'] && \Delta^n \ar[ld, "\lambda"]  \\
   & X
  \end{tikzcd}
\]
gives the standard factorisations (\ref{3existencecell}) as in the solid square
\begin{equation*}
\begin{tikzcd}[column sep={2.5em,between origins}]
\Delta^{n'} \ar[rr, "p"] \ar[d, -act, "\phi_{\lambda'}"'] && \Delta^n 
\ar[d, -act, "\phi_{\lambda}"]  \\
I_{f'} \ar[rr, dotted, "c^p_\lambda"] \ar[rd, "\mfunctor_{f'}"'] && I_f \ar[ld, "\mfunctor_f"] \\
& X. &
\end{tikzcd}
\end{equation*}
The statement is that there is a unique filler $c^p_\lambda$ as indicated 
with the dotted arrow, and this map is strict culf.
\end{lem}

\begin{proof}
Since $\phi_{\lambda'}$ is stretched and $\mfunctor_f$ is strict culf, the required map $c^p_\lambda$ is given by Lemma \ref{ortoghonalfactsystem}, and it is strict culf by Remark
\ref{culfconditiondiagonalortoghonal}.
\end{proof}

\setcounter{equation}{0}

Notice how the ambient $X$ is crucially exploited to characterise the lift uniquely. We also spell out how this choice of lifts act on isomorphisms:

\begin{lem}\label{definitiondiinmorphismUX}
  Let $X$ be a rigid decomposition groupoid, and consider an isomorphism $F
  \colon (I_f,\phi_\lambda) \isopil (I_g,\phi_\mu)$ in $(U_X)_n$, as on the
  right in the following diagram. For any map $p \colon \Delta^{n'} \to \Delta^n$
  in $\simplexcategory$, there is induced an isomorphism $F' \colon
  (I_{f'},\phi_{\lambda'}) \isopil (I_{g'},\phi_{\mu'})$ in $(U_X)_{n'}$,
  as indicated with the dotted arrow:
  \[
\begin{tikzcd}[column sep={2.5em,between origins}]
& \Delta^{n'} 
\ar[rrrr, pos=(0.55), "p"] 
\ar[ld, -act, "\phi_{\lambda'}"'] 
\ar[rdd, -act, pos=(0.3), "\phi_{\mu'}"] &&&& 
\Delta^n 
\ar[ld, -act, "\phi_{\lambda}"'] 
\ar[rdd, -act, pos=(0.3), "\phi_{\mu}"] &  \\
I_{f'} \ar[rrrr, pos=(0.7), "c^p_\lambda"'] \ar[rrd, dotted, "\exists!\; F'"'] &&&& I_f 
\ar[rrd, "F"'] && \\
&& I_{g'} \ar[rrrr, pos=(0.55), "c^p_\mu"'] &&&& I_{g} 
\end{tikzcd}
\]
This $F'$ is characterised as the unique isomorphism in 
$(U_X)_{n'}$
compatible with the canonical interval inclusions $c^p_\lambda$ and $c^p_\mu$ (that 
is, unique making the whole diagram commute).
\end{lem}

Let us explain the notation. The domain and codomain of $F$ are objects in 
$(U_X)_n$:
  as usual, the 
  notation refers to an $n$-simplex $\lambda \colon \Delta^n \to X$ with long 
  edge $f:=\operatorname{long}(\lambda)$ and another $n$-simplex
  $\mu \colon \Delta^n \to X$ with long 
  edge $g:=\operatorname{long}(\mu)$, and $F \colon I_f \isopil I_g$ is
  an isomorphism of intervals compatible with the subdivisions 
  $\phi_\lambda \colon \Delta^n \actto I_f$ and $\phi_\mu \colon \Delta^n \actto I_g$
  provided by Proposition \ref{3existencecell}. 
  
  The map $p \colon \Delta^{n'} \to \Delta^n$ 
  gives rise to $n'$-simplices $\lambda'$ and $\mu'$ in $X$:
  \[
  \begin{tikzcd}[column sep={2.5em,between origins}]
  \Delta^{n'} \ar[rr, "p"] \ar[rd, "\lambda'"'] && \Delta^n \ar[ld, "\lambda"]  \\
   & X
  \end{tikzcd}
  \qquad
  \begin{tikzcd}[column sep={2.5em,between origins}]
  \Delta^{n'} \ar[rr, "p"] \ar[rd, "\mu'"'] && \Delta^n \ar[ld, "\mu"]  \\
   & X
  \end{tikzcd}
  \]
  and induced interval inclusions (strict culf maps)
  \[
  I_{f'} \stackrel{c^p_\lambda}\longrightarrow I_f
  \qquad\qquad\quad
    I_{g'} \stackrel{c^p_\mu}\longrightarrow I_g
  \]
  as in Lemma~\ref{precomposeinterval}. 

\begin{proof}
  Rearranging the bottom and left part of the diagram as
  \[
  \begin{tikzcd}
  \Delta^{n'} \ar[d, -act, "\phi_{\lambda'}"'] \ar[rr, "\phi_{\mu'}"] && 
  I_{g'} \ar[d, "c^p_\mu"]  \\
  I_{f'} \ar[rru, dotted, "F'"] \ar[r, "c^p_\lambda"']  & I_f \ar[r, "F"'] & I_g
  \end{tikzcd}
  \]
  we see that $F'$ is the unique lift existing by Lemma \ref{ortoghonalfactsystem} since $\phi_{\lambda'}$ is stretched and $c^p_\mu$ is strict culf.
\end{proof}

\begin{rema}\label{rem: exp active pF}
In Lemma \ref{definitiondiinmorphismUX}, the diagram 
\begin{center}
\begin{tikzcd}
\Delta^{n'} \arrow[d, -act, "\phi_{\lambda'}"'] \arrow[r, -act, "\phi_{\mu'}"]                   & I_{{g'}} \arrow[d, "{c^p_\mu}"] \\
I_{{f'}} \arrow[r, "F c^p_\lambda"'] \arrow[ru, "{F'}" description] & I_g                                                  
\end{tikzcd}
\end{center}
commutes. When $p$ is active, we have that ${f'} = f$ and ${g'} = g$. Furthermore, if we substitute ${F'}$ by $F$, the diagram also commutes. By Lemma \ref{ortoghonalfactsystem}, we have that ${F'} = F$.  Therefore, when we work with an active map, we will use $F$ instead of $F'$.
\end{rema}

\setcounter{equation}{0}

Whit these preparations, we can establish that $\mathcal{U}_X$ is split:

\begin{propo}\label{proofsimplicialsetux}
  The cartesian fibration $\mathcal{U}_X \to \simplexcategory$ is split.
  The splitting is given by the cartesian arrows chosen in 
  Lemma~\ref{precomposeinterval}.
\end{propo}

\begin{proof}
  That this choice of lifts constitutes a splitting means that it is 
  functorial: composites of chosen lifts are lifts of composites, and lift 
  of identity arrows are identity arrows.
  For composition: given the solid diagram
\begin{equation*}
\begin{tikzcd}[column sep={5.5em,between origins}]
  \Delta^{n''} \ar[r, "q"] \ar[d, -act, "\phi_{\lambda''}"'] &
  \Delta^{n'} \ar[r, "p"] \ar[d, -act, "\phi_{\lambda'}"'] & \Delta^n 
\ar[d, -act, "\phi_{\lambda}"]  \\
I_{f''} \ar[r, dotted, "c^q_{\lambda'}"] \ar[rd, "\mfunctor_{f''}"'] &
I_{f'} \ar[r, dotted, "c^p_\lambda"] \ar[d, "\mfunctor_{f'}"'] & I_f \ar[ld, "\mfunctor_f"] \\
& X &
\end{tikzcd}
\end{equation*}
there are induced $c^q_{\lambda'}$ and $c^p_\lambda$ making the whole 
diagram commute. Now by the uniqueness characterisation of $c$-maps,
the composite $c^p_{\lambda} \circ c^q_{\lambda'}$ must be equal to 
$c^{pq}_{\lambda}$, as required.
\end{proof}

Knowing that the $c$-maps provide a splitting for $\mathcal{U}_X \to 
\simplexcategory$, there is now induced a strict functor
$$
U_X \colon  \simplexcategory^{\op} \to \Grpd
$$
(groupoid-valued functor corresponding to the associated right fibration). We can now simply spell out explicitly what this simplicial groupoid is. On objects, we simply have to describe the fibres:
$(U_X)_n$ is thus the groupoid whose objects are subdivided intervals of 
$X$, say $\phi_\lambda: \Delta^n \actto I_f$ (for some $\lambda\in X_n$ with long edge 
$f$), and whose arrows are the vertical arrows in $\mathcal{U}_X$, namely
strictly commutative triangles
\begin{equation*}
\begin{tikzcd}[column sep={2.5em,between origins}]
 & \Delta^n \ar[ld, -act, "\phi_{\lambda}"'] \ar[rd, -act, "\phi_{\mu}"] &  \\
I_f \ar[rr, "\sim"] && I_g.
\end{tikzcd}
\end{equation*}
Note that since $\mathcal{U}_X$ was defined as full inside $\mathcal{U}$, there are no
compatibility requirement with the `inclusions' $\mfunctor_f \colon  I_f \to X$ and $\mfunctor_g \colon 
I_g \to X$.

The simplicial operators act via cartesian lifts: the formula for $p: 
\Delta^{n'} \to \Delta^n$ is
$$
p^\ast \big( \Delta^n \stackrel{\phi_\lambda}\actto I_f \big)
=
( \Delta^{n'} \stackrel{\phi_{\lambda'}}\actto I_{f'})
$$
with reference to the chosen cartesian arrow 
\begin{equation}
\begin{tikzcd}[column sep={2.5em,between origins}]
\Delta^{n'} \ar[rr, "p"] \ar[d, -act, "\phi_{\lambda'}"'] && \Delta^n 
\ar[d, -act, "\phi_{\lambda}"] \\
I_{f'} \ar[rr, "c^p_\lambda"'] 
&& I_f. 
\end{tikzcd}
\end{equation}

The action of the simplicial operator on an isomorphism in $(U_X)_n$, say
$F \colon (I_f,\phi_\lambda) \isopil (I_g,\phi_\mu)$, is given by
Lemma~\ref{definitiondiinmorphismUX}. Indeed, this lemma is nothing but the standard
description of how a vertical isomorphism is transported along a cartesian
lift.

(Note that the construction of the isomorphism $F'$, which in Lemma~\ref{definitiondiinmorphismUX} 
was given using the stretched-culf factorisation system, can also be 
regarded as the argument why general arrows in a cartesian fibration
factor uniquely through cartesian arrows. Indeed the stretched-culf 
factorisation system is the abstract reason why we have a cartesian 
fibration.)

\begin{lem}\label{pisofibrationUX}\label{lemmadiscreteiso}
Let $p \colon [n] \actto [m]$ be an active map in $\simplexcategory$. Then $p^* \colon (U_X)_m \rightarrow (U_X)_n$ is a discrete fibration.
\end{lem}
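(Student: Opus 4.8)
The plan is to verify directly the unique right-lifting property that defines a discrete fibration: for every object $e$ of $(U_X)_m$ and every morphism $\beta$ of $(U_X)_n$ with codomain $p^*e$, there should be exactly one morphism of $(U_X)_m$ with codomain $e$ that $p^*$ carries to $\beta$. Since such lifting automatically forces the fibres to be discrete, this is all that needs checking.

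The first step is to make the action of $p^*$ on an active $p$ completely explicit. An active map factors through inner coface and codegeneracy maps, and by Remark~\ref{remapactivedivision} each of these leaves the underlying interval untouched and only reparametrises the marked simplex; assembling them with the simplicial identities of Proposition~\ref{proofsimplicialsetux} gives, on objects, $p^*(I_f,\phi_\lambda)=(I_f,\phi_{\lambda p})$, where $\phi_{\lambda p}=\phi_\lambda\circ p$ by the uniqueness clause of Proposition~\ref{3existencecell} (the composite is stretched over $\lambda p$, and $\longt(\lambda p)=\longt(\lambda)=f$ since $p$ preserves endpoints). On morphisms, Remark~\ref{pactiveUXexpl} shows $p^*$ is the identity, sending a stretched isomorphism $F$ to itself. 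Thus for active $p$ the functor $p^*$ changes neither the interval nor the functor, and only precomposes the marked simplex with $p$.

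The technical heart is a bijection I would isolate first: for a fixed interval $I_h$, the assignment $\psi\mapsto \mfunctor_h\circ\psi$ is a bijection from the stretched simplices $\psi\colon\Delta^m\actto I_h$ onto $\{\mu'\in X_m:\longt(\mu')=h\}$, with inverse $\mu'\mapsto\phi_{\mu'}$. Indeed a stretched $\psi$ satisfies $\longt(\psi)=\ledge_{I_h}$, so $\mfunctor_h\psi$ has long edge $\mfunctor_h(\ledge_{I_h})=h$; and since $\psi$ is a stretched simplex lying over $\mfunctor_h\psi$, the uniqueness in Proposition~\ref{3existencecell} forces $\psi=\phi_{\mfunctor_h\psi}$. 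This is precisely where discreteness of $X$ enters, making that uniqueness strict rather than merely up to isomorphism.

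With these preparations the lift is forced. Writing $e=(I_g,\phi_\mu)$ and $\beta\colon(I_h,\phi_\nu)\to p^*e=(I_g,\phi_{\mu p})$, so that $\beta\colon I_h\to I_g$ is a stretched isomorphism with $\beta\phi_\nu=\phi_{\mu p}$, I would form the stretched simplex $\beta^{-1}\phi_\mu\colon\Delta^m\actto I_h$, set $\mu':=\mfunctor_h\beta^{-1}\phi_\mu\in X_m$ (so $\phi_{\mu'}=\beta^{-1}\phi_\mu$ by the bijection), and take the candidate lift $\tilde\beta:=\beta\colon(I_h,\phi_{\mu'})\to(I_g,\phi_\mu)$. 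This is a genuine morphism of $(U_X)_m$ since $\beta\phi_{\mu'}=\phi_\mu$, and it projects correctly: $p^*\tilde\beta=\beta$ because $p^*$ is the identity on functors, while $p^*(I_h,\phi_{\mu'})=(I_h,\phi_{\mu' p})=(I_h,\phi_\nu)$ because $\mu' p=\mfunctor_h\beta^{-1}\phi_{\mu p}=\mfunctor_h\phi_\nu=\nu$. For uniqueness I would note that any lift ending at $e$ must have underlying functor $\beta$ and underlying interval $I_h$, hence source $(I_h,\psi)$ with $\beta\psi=\phi_\mu$, which forces $\psi=\beta^{-1}\phi_\mu=\phi_{\mu'}$. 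The only real obstacle is the bookkeeping in the first two steps; once the action of $p^*$ on active maps is pinned down and the correspondence $\psi\leftrightarrow\mu'$ is in place, the lifting itself is automatic.
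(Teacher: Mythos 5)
Your proposal is correct and follows essentially the same route as the paper: identify that for active $p$ the functor $p^*$ fixes the underlying interval and functor and only precomposes the marked stretched simplex with $p$, and then observe that the unique lift of a morphism $\beta$ ending at $p^*(I_g,\phi_\mu)$ is $\beta$ itself with source simplex $\beta^{-1}\phi_\mu$, forced by invertibility of $\beta$ and the uniqueness clause of Proposition~\ref{3existencecell}. You merely spell out details the paper's terse proof leaves implicit (the identity $\phi_{\lambda p}=\phi_\lambda\circ p$ and the bijection between stretched $m$-simplices of $I_h$ and $m$-simplices of $X$ with long edge $h$), which is a welcome clarification but not a different argument.
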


\begin{proof}
Let $(I_g, \phi_\mu)$ be an object in $(U_X)_m$ and let $F \colon (I_f, \phi_\lambda) \rightarrow p^*(I_g, \phi_\mu)$ be a morphism in $(U_X)_n$. To provide a lift is to use the same underlying $F$ (by \ref{rem: exp active pF}, since $p$ is active), but the compatibility which characterises morphisms in $(U_X)_m$ is now with the $\phi$ maps from $\Delta^m$ instead of from $\Delta^n$. In other words, we need to find  the dashed arrow in the diagram 
\begin{center}
\begin{tikzcd}
\Delta^n \arrow[rr, "\phi_\lambda"] \arrow[rd, "p"'] &                                                                      & I_f \arrow[d, "F"] \\
                                             & \Delta^m \arrow[r, "\phi_\mu"'] \arrow[ru, "\phi_{\overline{\eta}}", dashed] & I_g,               
\end{tikzcd}
\end{center}
which is possible since $F$ is invertible, in fact $\overline{\eta} = \mfunctor_f F^{-1}(\phi_\lambda)$. Therefore $p^*$ is a discrete fibration.
\end{proof}

\begin{exa}
In general, the image of an inert map of $\simplexcategory^{\op}$ under $U_X$ is not a discrete fibration. Let $\mathcal{C}$ be the category pictured by the following commutative diagram
\[\begin{tikzcd}
	& y \\
	x && z & w \\
	& {y'}
	\arrow["a", from=2-1, to=1-2]
	\arrow["{a'}"', from=2-1, to=3-2]
	\arrow["b", from=1-2, to=2-3]
	\arrow["{b'}"', from=3-2, to=2-3]
	\arrow["f"{description}, from=2-1, to=2-3]
	\arrow["g"', from=2-3, to=2-4]
\end{tikzcd}\]

Since $x$ is an initial object and $w$ is a terminal object, we have that $N(\mathcal{C}) \simeq I_{gf}$ by Lemma \ref{lemma: mfunctor is invertible}. Let $\phi_{gf}$ be the $2$-simplex induced by the morphisms $f$ and $g$ in $(N({\mathcal{C}}))_2$.
Let $(N(\mathcal{C}), \phi_{gf}) \in (U_{N(\mathcal{C})})_2$ be the interval construction of $\phi_{gf}$. Applying $d_0$ to $\phi_{gf}$, we have that $d_0(N(\mathcal{C}), \phi_{gf})  = (I_g, \phi_g)$. Let $\identity_{I_g} \colon (I_g, \phi_g) \rightarrow (I_g, \phi_g)$ be the identity morphism in $(U_{N(\mathcal{C})})_1$.

We can construct two lifts of $\identity_{I_g}$ in $(U_{N(\mathcal{C})})_2$. Let $F \colon (N(\mathcal{C}), \phi_{gf}) \rightarrow (N(\mathcal{C}), \phi_{gf})$ be the functor that fixes all the objects in $\mathcal{C}$ except $y$ and $y'$. It is easy to check that $d_0 F = \identity_{I_{g}}$. On the other hand it is straightforward to see that the identity morphism $\identity_{I_{gf}}$ satisfies $d_0 \identity_{I_{gf}} = \identity_{I_{g}}$. Therefore, $F$ and $\identity_{I_{gf}}$ are two different lifts of $\identity_{I_{g}}$.

\end{exa} 

When $S$ is a simplicial groupoid, we have a simplicial set induced by the \textit{object functor} $\Obj \colon \Grpd \rightarrow \Set$, which is defined as forgetting the morphisms. We denote $\Obj \circ S$ as $S^0$.

\begin{propo}\label{propo: U0 is iso X}
Let $X \colon \simplexcategory^{\op} \rightarrow \Grpd$ be a rigid decomposition groupoid. Then $U_X^0 \cong X^0$.
\end{propo}  

\begin{proof}
The proof is easily deduced from the fact that every object $(I_f, \phi_\lambda)$ in $(U_X^0)_n$ corresponds to some $\lambda$ in $X_n^0$ by definition of $U_X$.
\end{proof}

\begin{lem}\label{proofdspaceux}
Let $X$ be a rigid decomposition groupoid. The simplicial groupoid $U_X \colon \simplexcategory^{\op} \rightarrow \Grpd$ is a decomposition groupoid.
\end{lem}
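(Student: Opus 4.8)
The plan is to verify the two families of pullback squares characterising a decomposition groupoid (Definition~\ref{definitiondecompositionspace}). For each $n\ge 2$ and $0<i<n$ I must show that the squares
\begin{center}
\begin{tikzcd}[cramped, sep=small]
(U_X)_{n+1} \arrow[r,"d_{i+1}"]\arrow[d,"d_\bot"'] & (U_X)_n \arrow[d,"d_\bot"]\\
(U_X)_n \arrow[r,"d_i"'] & (U_X)_{n-1}
\end{tikzcd}
\quad and \quad
\begin{tikzcd}[cramped, sep=small]
(U_X)_{n+1} \arrow[r,"d_i"]\arrow[d,"d_\top"'] & (U_X)_n \arrow[d,"d_\top"]\\
(U_X)_n \arrow[r,"d_i"'] & (U_X)_{n-1}
\end{tikzcd}
\end{center}
are homotopy pullbacks; I treat the left-hand one, the other being symmetric. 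In both squares the horizontal maps are inner face maps, dual to active coface maps, hence discrete fibrations by Lemma~\ref{pisofibrationUX} and in particular isofibrations. By the observation following Lemma~\ref{lemmapullbackfibres} it therefore suffices to exhibit each square as a \emph{strict} pullback of groupoids, and a commuting square of groupoids is a strict pullback precisely when it is a pullback both on objects and on morphisms.

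On objects this is immediate: by Proposition~\ref{U0toX} the object simplicial set $U_X^0$ is isomorphic to $X$, which is a decomposition set by Corollary~\ref{U0isdecomposition}, so the induced squares of object sets are pullbacks.

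For morphisms I study the comparison $F\mapsto(d_\bot F,\,d_{i+1}F)$ into the pullback of the morphism sets. Since $d_{i+1}$ is active, Remark~\ref{pactiveUXexpl} shows that $d_{i+1}F$ retains the same underlying interval isomorphism as $F$, while on objects $d_\bot$ and $d_{i+1}$ record the faces $\lambda d^\bot$ and $\lambda d^{i+1}$ (and likewise for $\mu$), which by the object-level pullback determine $\lambda$ and $\mu$; hence the comparison is injective. For surjectivity I take a compatible pair $(\alpha,\beta)$, reconstruct the unique $\lambda,\mu\in X_{n+1}$ from the object decomposition, and set $F:=\beta$, viewed as an isomorphism $I_{\longt(\lambda)}\to I_{\longt(\mu)}$ — legitimate because the active face $d^{i+1}$ preserves long edges. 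To see that $F$ is a genuine morphism of $(U_X)_{n+1}$ I must check $F\phi_\lambda=\phi_\mu$; as $F\phi_\lambda$ is stretched, Proposition~\ref{3existencecell} reduces this to the single equation $\mfunctor_{\longt(\mu)}(F\phi_\lambda)=\mu$, which, by the object decomposition of $X$, I verify on the two faces $d^{i+1}$ and $d^\bot$ separately. The inner face is immediate from the defining property of $\beta$ and the identity $\phi_{\lambda d^{i+1}}=\phi_\lambda d^{i+1}$ of Remark~\ref{remapactivedivision}.

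The outer face is the crux. Writing $H_\lambda\colon I_{\longt(\lambda d^\bot)}\to I_{\longt(\lambda)}$ and $H_\mu$ for the CULF comparison maps of Lemma~\ref{precomposeinterval}, so that $H_\lambda\phi_{\lambda d^\bot}=\phi_\lambda d^\bot$, I reduce the $d^\bot$-face equation to the identity $\beta H_\lambda=H_\mu\alpha$ of interval maps, and this is precisely what the compatibility $d_i\alpha=d_\bot\beta$ encodes once the construction of $d_\bot$ on morphisms in Lemma~\ref{definitiondiinmorphismUX} is unwound. Granting $F\phi_\lambda=\phi_\mu$, the remaining equality $d_\bot F=\alpha$ is then formal: $d_\bot F$ and $\alpha$ are parallel morphisms of $(U_X)_n$ with the same image under $d_i$ (by the simplicial identity $d_i d_\bot=d_\bot d_{i+1}$ together with $d_i\alpha=d_\bot\beta$), hence equal, since $d_i$ is faithful, being a discrete fibration. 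This gives the strict pullback on morphisms and finishes the first square; the second is identical with $d_\top$ in place of $d_\bot$. I expect the identification $\beta H_\lambda=H_\mu\alpha$ — tracing the compatibility condition through the orthogonal-factorisation definition of the face maps on morphisms — to be the only genuinely delicate step, the rest being bookkeeping governed by the uniqueness in Proposition~\ref{3existencecell}.
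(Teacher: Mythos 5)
Your overall strategy (strict pullback on objects plus morphisms, legitimised by the horizontal maps being isofibrations) is viable, and your object-level step coincides with the paper's: both reduce to $U_X^0\cong X$ (Proposition~\ref{U0toX} and Corollary~\ref{U0isdecomposition}). But you have missed the observation that makes the entire morphism-level analysis unnecessary, and it is precisely there that your argument has a gap. The paper's proof runs as follows: by Lemma~\ref{lemmapullbackfibres} the square is a pullback iff the induced maps on fibres of the horizontal (active) face maps are equivalences; by Lemma~\ref{lemmadiscreteiso} those face maps are \emph{discrete} fibrations, so their fibres are discrete groupoids, i.e.\ sets; hence an equivalence of fibres is detected on objects alone, and the object-level pullback (from $X$ being a decomposition set) finishes the proof. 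No morphisms of $(U_X)_n$ ever need to be compared.

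Your surjectivity argument, by contrast, hinges on the ``crux'' you yourself flag: that the compatibility $d_i\alpha=d_\bot\beta$ yields the $d^\bot$-face equation $\mfunctor_g\,\beta\,H_\lambda\,\phi_{\lambda d^\bot}=\mu\,d^\bot$. As written this is asserted (``this is precisely what the compatibility \dots encodes once \dots unwound''), not proved, and it is not a formality. Unwinding Lemma~\ref{definitiondiinmorphismUX}, the condition $d_\bot\beta=d_i\alpha$ gives $\mfunctor_{g_2}\alpha=\mfunctor_g\,\beta\,H$, where $H\colon I_{\longt(\lambda d^\bot)}\to I_{f}$ is the comparison map of Lemma~\ref{precomposeinterval} for $d^\bot\colon[n-1]\to[n]$ relative to $\lambda d^{i+1}$ --- which is \emph{a priori} a different map from your $H_\lambda$, the comparison for $d^\bot\colon[n]\to[n+1]$ relative to $\lambda$. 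One must still prove $H=H_\lambda$ (using $d^{i+1}d^\bot=d^\bot d^i$, the identity $\phi_{\nu p}=\phi_\nu p$ for $p$ active from Proposition~\ref{3existencecell}, and the uniqueness clause of Lemma~\ref{ortoghonalfactsystem}). This is doable, so your route can be completed, but the missing verification sits at the centre of the argument rather than at its periphery; and the paper's discreteness observation would have let you delete that entire half of the proof.
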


\begin{proof} 
We need to show that for an active-inert pullback square in $\simplexcategory^{\op}$, the image under $U_X$ is a homotopy pullback 
\begin{center}
\begin{tikzcd}
(U_{X})_{m} \arrow[r, -act,  "g"] \arrow[d, tail, "h"']& (U_{X})_{n} \arrow[d, tail, "{\overline{h}}"]\\
(U_{X})_{k} \arrow[r, -act ,"\overline{g}"'] & (U_{X})_{s}. 
\end{tikzcd}
\end{center}
Here $g$ and $\overline{g}$ are active maps,  $h$ and $\overline{h}$ are inert maps. Since $g$ and $\overline{g}$ are active maps, they are discrete fibrations by Lemma \ref{lemmadiscreteiso}. Therefore,  we can work with strict fibres. By Lemma \ref{lemmapullbackfibres}, the previous square  is a homotopy pullback if and only if for each object $(I_f, \phi_\lambda)$ in $(U_X)_n$, corresponding to some $\lambda \in X_n$, the morphism $h' \colon \Fib_{(I_f, \phi_\lambda)}(g) \rightarrow \Fib_{\overline{h}(I_f, \phi_\lambda)}(\overline{g})$, induced by the morphism $h$, is an equivalence. Here $\Fib_{(I_f, \phi_\lambda)}(g)$ is the strict fibre of $g$ over $(I_f, \lambda)$ and 
$\Fib_{\overline{h}(I_f, \phi_\lambda)}(\overline{g})$ is the strict fibre of $\overline{g}$ over $\overline{h}(I_f, \phi_\lambda)$. 

The fibres $\Fib_{(I_f, \phi_\lambda)}(g)$ and $\Fib_{\overline{h}(I_f, \phi_\lambda)}(\overline{g})$ are discrete groupoids since $g$ and $g'$ are discrete fibrations. Furthermore, as a consequence of Proposition \ref{propo: U0 is iso X}, we have a bijection between the objects of $X$ and $U_X$. This implies that the diagram 
\[\begin{tikzcd}
	{\Fib_{(\lambda)}(g)} & {\Fib_{\overline{h}\lambda}(\overline{g})} \\
	{\Fib_{(I_f, \phi_\lambda)}(g)} & {\Fib_{\overline{h}(I_f, \phi_\lambda)}(\overline{g})}
	\arrow["{h''}", from=1-1, to=1-2]
	\arrow["{h'}", from=2-1, to=2-2]
	\arrow[ from=1-1, to=2-1]
	\arrow[ from=1-2, to=2-2]
\end{tikzcd}\]
commutes. Here $h'' \colon \Fib_{\lambda}(g) \rightarrow \Fib_{\overline{h}\lambda}(\overline{g})$ is the morphism induced by $h \colon X_m \rightarrow X_k$. Since $X$ is rigid, the morphism $h''$  is an equivalence. Since $\Fib_{(I_f, \phi_\lambda)}(g)$ and $\Fib_{\overline{h}(I_f, \phi_\lambda)}(\overline{g})$ are discrete groupoids, the vertical maps are equivalences by Proposition \ref{propo: U0 is iso X}. Hence, the map $h' \colon \Fib_{(I_f, \phi_\lambda)}(g) \rightarrow \Fib_{\overline{h}(I_f, \phi_\lambda)}$ is an equivalence.
\end{proof}

\setcounter{equation}{0}

\begin{lem}\label{UXcomplete}
Let $X$ be a rigid decomposition groupoid. Then the decomposition groupoid $U_X$ is a complete.
\end{lem}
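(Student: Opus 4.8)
The plan is to use that, by Lemma \ref{proofdspaceux}, $U_X$ is already a decomposition groupoid, so by the definition of completeness it remains only to prove that the degeneracy $s_0\colon (U_X)_0\to (U_X)_1$ is a monomorphism. I would invoke the standard fact that a functor between groupoids is a monomorphism (that is, has $(-1)$-truncated homotopy fibres) exactly when it is fully faithful, and thereby reduce the whole lemma to checking that $s_0$ is full and faithful. Two structural observations organise the argument. First, since $X$ is discrete, each $I_f$ is a Segal \emph{set} by Remark \ref{defintervaldspace}; hence its simplicial operators act on ordinary sets, $s_0\colon (I_f)_0\to (I_f)_1$ is injective (it is split by $d_0$), and the uniqueness in Proposition \ref{existencecell} is strict. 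Second, for $\lambda\in X_0$ one has $f=\longt(\lambda)=\lambda s^0$, so $s_0(I_f,\phi_\lambda)=(I_f,\phi_{\lambda s^0})$ keeps the \emph{same} underlying interval $I_f$, while on morphisms $s_0$ is, by construction, the identity assignment $F\mapsto F$ on the underlying stretched isomorphisms.

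The technical heart is the identity $\phi_{\lambda s^0}=s_0(\phi_\lambda)$ in $(I_f)_1$: the stretched simplex attached to the degenerate $1$-simplex $\lambda s^0$ is the degenerate edge of the object $\phi_\lambda$. I would prove it by uniqueness: $s_0(\phi_\lambda)$ is a stretched $1$-simplex of $I_f$ (a degeneracy of the stretched simplex $\phi_\lambda$), and it satisfies $\mfunctor_f\,s_0(\phi_\lambda)=s_0\big((\mfunctor_f)_0\phi_\lambda\big)=\lambda s^0$ because $\mfunctor_f$ is simplicial; since $\longt(\lambda s^0)=f$, the strict uniqueness clause of Proposition \ref{existencecell} forces $s_0(\phi_\lambda)=\phi_{\lambda s^0}$.

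Faithfulness is then immediate, because $s_0$ sends a morphism $F$ to the same underlying functor $F$, so the induced map on hom-sets is injective. For fullness I would take an arbitrary morphism $G\colon s_0(I_f,\phi_\lambda)\to s_0(I_{f'},\phi_{\lambda'})$ in $(U_X)_1$, i.e.\ a stretched isomorphism $G\colon I_f\to I_{f'}$ with $G\phi_{\lambda s^0}=\phi_{\lambda' s^0}$. Using the identity above and simpliciality of $G$, this reads $s_0(G\phi_\lambda)=s_0(\phi_{\lambda'})$ in $(I_{f'})_1$; applying $d_0$ and the simplicial identity $d_0 s_0=\identity$ yields $G\phi_\lambda=\phi_{\lambda'}$. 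Hence $G$ is already a morphism $(I_f,\phi_\lambda)\to(I_{f'},\phi_{\lambda'})$ of $(U_X)_0$ with $s_0(G)=G$, so $s_0$ is full. Together with faithfulness this makes $s_0$ fully faithful, hence a monomorphism, and $U_X$ complete.

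I expect the only delicate step to be the identity $\phi_{\lambda s^0}=s_0(\phi_\lambda)$ and the descent of $G$ that it powers: both rely essentially on the discreteness of $X$, which makes $I_f$ a genuine simplicial set (so that $s_0$ is split by $d_0$ and the relevant equalities are strict rather than merely up to isomorphism) and renders the uniqueness in Proposition \ref{existencecell} strict. Once strictness is secured, the remainder is the formal verification that $s_0$ is full and faithful.
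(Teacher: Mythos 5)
Your proof is correct, but it takes a genuinely different route from the paper's. The paper first invokes Lemma \ref{lemmadiscreteiso}: since $s^0\colon[1]\actto[0]$ is active, $s_0\colon (U_X)_0\to(U_X)_1$ is a discrete fibration, so its strict fibres compute the homotopy fibres and are automatically discrete; it then checks directly that the fibre over $(I_f,\phi_f)$ is empty when $d_0(f)\neq d_1(f)$ and has at most one object when $f=s_0(x)$, by feeding the two candidate simplices $\phi_{s_0(x)}$ and $\phi_{s_0(y)}$ into the uniqueness clause of Proposition \ref{3existencecell} and using that $s_0\colon X_0\to X_1$ is split by $d_0$. You instead reduce ``monomorphism'' to ``fully faithful'' and verify the latter by hand; your pivot is the identity $\phi_{\lambda s^0}=s_0(\phi_\lambda)$, which you also extract from the uniqueness clause of Proposition \ref{3existencecell}. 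One point worth making explicit there: that uniqueness is only among \emph{stretched} simplices, so you must observe that $s_0(\phi_\lambda)=\phi_\lambda\circ s^0$ is again stretched --- this holds because $s^0$ is active, hence preserves endpoints and the long edge. In the end both arguments rest on the same uniqueness statement but package it differently: the paper leans on the discrete-fibration lemma to avoid discussing morphisms in the fibre at all, whereas you handle the morphisms explicitly (faithfulness is trivial since $s_0$ is the identity on underlying functors, and fullness is the $d_0s_0=\identity$ retraction argument) and in exchange need no case split on whether $f$ is degenerate. Both are sound.
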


\begin{proof}
To establish that $U_X$ is complete, we need to check that the map $s_0 \colon (U_X)_0 \rightarrow (U_X)_1$ is a monomorphism. This means that we need to show that the fibre is either empty or singleton.
Remember that the objects in $(U_X)_0$ are given by $0$-simplices of $X$. Combining this with the fact that the long edge of a $0$-simplex is $s_0(x)$, we have that the objects in $(U_X)_0$ are of the form $(I_{s_0(x)}, \phi_x)$.
Since $s_0$ is active, we have that $s_0$ is a discrete fibration by Lemma \ref{lemmadiscreteiso}. Therefore, we will consider strict fibres. For $f \in X_1$ denote by $\phi_f \colon \Delta^1 \actto I_f$ the unique stretched map. The strict fibre over $(I_f, \phi_f) \in (U_X)_1$ is given by the strict pullback
\begin{center}
\begin{tikzcd}
{\Fib_{(I_f, \phi_f)}(s_0)} \drpullback \arrow[r] \arrow[d] & (U_X)_0 \arrow[d, "s_0"] \\
1 \arrow[r, "{\ulcorner (I_f, \phi_f) \urcorner}"']                  & (U_X)_1.                
\end{tikzcd}
\end{center}
Unless $f$ is degenerate, the strict fibre is empty. In the degenerate case, consider $(I_{s_0 (x)}, \phi_{s_0(x)})$ and $(I_{s_0 (y)}, \phi_{s_0(y)})$ two objects in ${\Fib_{(I_f, \phi_f)}(s_0)}$ such that $(I_{s_0 (x)}, \phi_{s_0(x)}) = (I_{s_0 (y)}, \phi_{s_0(y)})$. This means that $\phi_{s_0(x)} = \phi_{s_0(y)}$. This together with the rigid condition of $X$ (the map $s_0 \colon X_0 \rightarrow X_1$ is a monomorphism) implies that $x = y$.
\end{proof}

To construct a map from $X$ to $U_X$, the following result is necessary:

\begin{lem}\label{lemma: functorial of I in arrows}
  Given an isomorphism $\alpha \colon \lambda \to \mu$ in $X_n$, there is induced an isomorphism
  \[
  \begin{tikzcd}[column sep={2.5em,between origins}]
 & \Delta^n \ar[ld, -act, "\phi_{\lambda}"'] \ar[rd, -act, "\phi_{\mu}"] &  \\
I_f \ar[rr, "F_\alpha"'] && I_g.
\end{tikzcd}
\]
As usual, $f = \longt(\lambda)$ and $g = \longt(\mu)$.
\end{lem}
\begin{proof}
The main point is to prove it just for $1$-simplices: given $\longt(\alpha) \colon f \to g$ in 
$X_1$, the interval $I_f$ is the fibre over $f \in \delta(X_1)$ of the whole
simplicial groupoid $\Dec_\top\Dec_\bot X \to \delta(X_1)$, and $I_g$ is 
the fibre over $g$. This is a level-wise fibration over $\delta(X_1)$, since it is formed entirely of
active maps. But in a fibration, any isomorphism $f \cong g$ between  two objects in the base induces an isomorphism $F_\alpha \colon I_f \to I_g$ between the fibres. {Recall the objects of $I_f$ are the $2$-simplices with long edge $f$, and the objects of $I_g$ are the $2$-simplices
with long edge $g$. So the isomorphism $F_\alpha$ sends a $2$-simplex with long edge $f$ to a $2$-simplex with long edge $g$. This forces $F_\alpha s_0(f) = s_0(g)$ and $F_\alpha s_1(f) = s_1(g)$, which is equivalent to saying that $F_\alpha$ is stretched and therefore $F_\alpha \ledge_{I_f} = \ledge_{I_g}$}.

Coming back to the general case, $\lambda \cong \mu$: we have the solid outer square:
\[
\begin{tikzcd}
\Delta^1 \ar[d, -act] \ar[r, -act, "\ledge_{I_f}"] \ar[rr, -act, bend left, "\ledge_{I_g}"] & I_f \ar[r, "F_\alpha"] & I_g \ar[d, "\mfunctor_g"]  \\
\Delta^n \ar[ru, dotted, "\phi_\lambda"] \ar[rru, dotted, "\phi_\mu"'] \ar[rr, "\mu"'] && X.
\end{tikzcd}
\]
The curved triangle commutes by the $1$-simplex case already treated. The dotted arrows then exist individually by Proposition \ref{3existencecell}. The triangle that these two 
dotted arrows form with $I_f \simeq I_g$ is now forced to commute, since 
$\Delta^1 \to \Delta^n$ is active and $\mfunctor_g$ is strict culf. 
\end{proof}

\setcounter{equation}{0}
We define a simplicial map $I \colon X \rightarrow U_X$, using the interval construction:
\begin{itemize}
\item the map $I$ sends an object $\lambda \in X_n$ to the pair $(I_f, \phi_\lambda)$ where  $f = \longt(\lambda)$ and $\phi_\lambda$ is the  $n$-simplex induced by $\lambda$ of Proposition \ref{3existencecell}.

\item the map $I$ sends an arrow $\alpha \colon \lambda \rightarrow \mu$ in $X_n$ to the isomorphism $F_\alpha \colon (I_f, \phi_\lambda) \rightarrow (I_g, \phi_\mu)$ induced  by $\alpha$ of Lemma \ref{lemma: functorial of I in arrows}. (As usual, $f = \longt(\lambda)$ and $g = \longt(\mu)$.)

\end{itemize}

\begin{propo}\label{propoculfix}
Let $X$ be a rigid decomposition groupoid. The simplicial map $I \colon X \rightarrow U_X$ is strict culf.
\end{propo}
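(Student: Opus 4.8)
The plan is to reduce the CULF condition to a single cartesian square and then check that square fibrewise. Since $X$ is a decomposition groupoid by hypothesis and $U_X$ is one by Lemma~\ref{proofdspaceux}, Proposition~\ref{culfcondition} applies, so it suffices to prove that $I$ is cartesian on the inner coface $d^1\colon [1]\to [2]$; that is, that the naturality square
\begin{center}
\begin{tikzcd}
X_2 \arrow[r, "I_2"] \arrow[d, "d_1"'] & (U_X)_2 \arrow[d, "d_1"] \\
X_1 \arrow[r, "I_1"'] & (U_X)_1
\end{tikzcd}
\end{center}
is a homotopy pullback.

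To verify this I would use the fibre criterion of Lemma~\ref{lemmapullbackfibres}. First observe that the right-hand map is well behaved: as $d^1$ is active, $d_1\colon (U_X)_2 \to (U_X)_1$ is a discrete fibration by Lemma~\ref{pisofibrationUX}, hence an isofibration, so its homotopy fibres agree with its strict (discrete) fibres. It then remains to fix an edge $f\in X_1$ and show that the comparison map between the fibre of $d_1\colon X_2\to X_1$ over $f$ and the fibre of $d_1\colon (U_X)_2\to (U_X)_1$ over $I_1(f)=(I_f,\phi_f)$ is an equivalence.

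Next I would identify both fibres explicitly. The left fibre $\{\sigma\in X_2 : d_1\sigma=f\}$ is, by the pointwise description of the interval in Remark~\ref{defintervaldspace}, exactly the set of objects $(I_f)_0$. For the right fibre, recall from Remark~\ref{remapactivedivision} that for $(I_g,\phi_\mu)\in (U_X)_2$, with $\mu\in X_2$ and $g=\longt(\mu)=d_1\mu$, one has $d_1(I_g,\phi_\mu)=(I_g,\phi_{\mu d^1})=(I_g,\phi_g)$; thus the strict fibre over $(I_f,\phi_f)$ consists precisely of the pairs $(I_f,\phi_\mu)$ with $\mu\in X_2$ and $d_1\mu=f$. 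Distinct $\mu$ give distinct such pairs, since the standard factorisation $\mfunctor_f\phi_\mu=\mu$ of Proposition~\ref{3existencecell} recovers $\mu$ from $\phi_\mu$, and the same factorisation shows the fibre carries no non-identity morphisms. Under these identifications the comparison map is $\sigma\mapsto I_2(\sigma)=(I_f,\phi_\sigma)$ (using $\longt(\sigma)=f$), which is the evident bijection between the two copies of $\{\sigma\in X_2:d_1\sigma=f\}$; being a bijection of discrete groupoids it is an equivalence. Lemma~\ref{lemmapullbackfibres} then gives that the square is a pullback, so $I$ is cartesian on $d^1$ and hence CULF by Proposition~\ref{culfcondition}.

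The only delicate point, and the step I expect to require the most care, is the description of the right-hand fibre: one must use that $d^1$ is active, so that Remark~\ref{remapactivedivision} applies and Lemma~\ref{pisofibrationUX} makes the fibre discrete, together with the uniqueness in the standard factorisation to rule out extra objects or automorphisms in the fibre. Once this is pinned down the identification of the comparison map as a bijection is immediate, and the discreteness of $X$ guarantees there is no homotopical subtlety left to worry about.
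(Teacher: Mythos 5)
Your proof is correct and follows essentially the same route as the paper's: both reduce to the single naturality square for $d^1$ via Proposition~\ref{culfcondition}, pass to strict fibres using that active maps induce discrete fibrations (Lemma~\ref{lemmadiscreteiso}), and verify the fibrewise bijection via the uniqueness in the standard factorisation of Proposition~\ref{3existencecell}. The only cosmetic difference is that you read off surjectivity directly from the definitional form of the objects of $(U_X)_2$, whereas the paper exhibits the preimage $\mfunctor_f\mu$ of a fibre element explicitly.
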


\begin{proof}
Since $d_1$ is active, we have that $d_1$ is a discrete fibration by Lemma \ref{lemmadiscreteiso}. Therefore, as a consequence of Proposition \ref{culfcondition}, to prove that $I$ is culf it is enough to prove that the following diagram is a strict pullback
\begin{center}
\begin{tikzcd}
X_2 \arrow[r, "d_1"] \arrow[d, "I_2"']& X_1 \arrow[d, "I_1"] \\
(U_X)_2 \arrow[r, "d_1"']& (U_X)_1,
\end{tikzcd}
\end{center} 
which is equivalent to proving that the functor $G \colon X_2 \rightarrow (U_X)_2 \times_{(U_X)_1} X_1$ induced by the pullback property is an isomorphism. For each $\sigma \in X_2$, the object $G(\sigma)$ is equal to $((I_{d_1(\sigma)}, \phi_\sigma), d_1(\sigma))$ where $\phi_\sigma$ is given by Proposition \ref{3existencecell}. For a morphism $\alpha \colon \sigma \rightarrow \sigma'$ put $f = d_1(\sigma)$ and $g = d_1(\sigma')$, the morphism $G(\sigma)$ is equal to $(H_\alpha, d_1(\alpha))$. Here $H_\alpha \colon (I_f, \phi_\sigma) \rightarrow (I_g, \phi_{\sigma'})$ is the isomorphism given by Lemma \ref{lemma: functorial of I in arrows}.

Recall that $d_1$ is a discrete fibration, this together with Proposition \ref{3existencecell} allows to construct a functor $R \colon (U_X)_2 \times_{(U_X)_1} X_1 \rightarrow X_2$. For an object $(\phi_\sigma \colon \Delta^2 \actto I_f, f)$, the object $R(\phi_\sigma, f)$ is defined as $\mfunctor_f \phi_\sigma$ in $X_2$. For a morphism $(H, \overline{\alpha})$, where $H \colon (I_f, \phi_\sigma) \rightarrow (I_g, \phi_{\sigma'})$ and $\overline{\alpha} \colon f \rightarrow g$, the morphism $R(H, \overline{\alpha})$ is defined as the morphism $\alpha \colon \mfunctor_f \phi_\sigma \rightarrow \mfunctor_g \phi_{\sigma'}$ which is the lifting of the arrow $\overline{\alpha} \colon f \rightarrow g$ with respect to $\mfunctor_f \phi_\sigma$ and $\mfunctor_g \phi_{\sigma'}$. The lift is unique since $d_1$ is a discrete fibration. It is straightforward to verify that $R$ is the inverse of the functor $G$. Note that the diagram is also a homotopy pullback since it is a strict pullback and $d_1$ is a discrete fibration.
\end{proof}

\setcounter{equation}{0}

\begin{blanko}
{Compatibility of $\mfunctor$-maps and subdivided intervals}
\label{subsec: interval cons of interval}
\end{blanko}

Given a simplicial map 
$G \colon X\to Y$, there will be natural relationships between intervals in
$X$ and intervals in $Y$, but to compare them we need to step out to the 
global $U$,
leaving the realms of $U_X$ and $U_Y$. 

Lemma \ref{isoimageculf} can be proved in an alternative way as follows:

\begin{lem}\label{lem:3.18}
  For any simplicial map between rigid decomposition groupoids $G \colon X \to Y$, there is a 
  unique
  stretched map $G_f \colon I_f \actto I_{Gf}$, compatible with
  $\mfunctor$-maps. This means that the diagram 
\[\begin{tikzcd}
	{I_f} & {I_{Gf}} \\
	X & Y
	\arrow["{\mfunctor_f}"', from=1-1, to=2-1]
	\arrow["{\mfunctor_{Gf}}", from=1-2, to=2-2]
	\arrow["G"', from=2-1, to=2-2]
	\arrow[-act, "{G_f}", from=1-1, to=1-2]
	\arrow["{}"{description}, draw=none, from=1-1, to=2-2]
\end{tikzcd}\]  
commutes. If $G$ is culf then $G_f$ is invertible.
\end{lem}
\begin{proof}
  The unique stretched map is given by Lemma \ref{ortoghonalfactsystem}:
  \[
\begin{tikzcd}
\Delta^1 \ar[d, -act] \ar[rr, -act] && I_{Gf} \ar[d, "\mfunctor_{Gf}"]  \\
I_f \ar[rru, dotted] \ar[r, "\mfunctor_f"'] & X \ar[r, "G"'] & Y.
\end{tikzcd}
\]
If $G$ is culf, then the dotted arrow is culf too by Remark \ref{culfconditiondiagonalortoghonal}, and since it is both
culf and stretched, it is invertible as a consequence of Proposition \ref{propofactsystemInt}.
\end{proof}

\begin{lem}\label{lem:3.18-subdiv}
  For any simplicial map between rigid decomposition groupoids $G \colon X \to Y$, there is a 
  unique
  stretched map $G_f \colon I_f \actto I_{Gf}$, compatible with subdivision: if we start with
  $\lambda : \Delta^n \to X$ (with long edge $f$), then the triangle
\begin{equation}\label{G}
\begin{tikzcd}[column sep={2.5em,between origins}]
 & \Delta^n \ar[ld, -act, "\phi_\lambda"'] \ar[rd, -act, "\phi_{G\lambda}"] &  \\
I_f \ar[rr, -act, "G_f"'] \ar[d, "\mfunctor_f"'] && I_{Gf} \ar[d, "\mfunctor_{Gf}"] \\
X \ar[rr, "G"'] && Y
\end{tikzcd}
\end{equation}
commutes. 
\end{lem}

\begin{proof}
  Orthogonality (Lemma \ref{ortoghonalfactsystem}) for the square
  \[
\begin{tikzcd}[column sep={22pt,between origins}, row sep={22pt,between 
  origins}]
  &\Delta^1 \ar[rd] \ar[rddd, -act, bend right=18] \ar[rrrrrd, -act, bend left=15] 
  &&&&& \\
&&\Delta^n \ar[dd, -act] \ar[rrrr, -act] &&&& I_{Gf} \ar[dd, "\mfunctor_{Gf}"]  \\
\\
&&I_f \ar[rrrruu, dotted] \ar[rr, "\mfunctor_f"'] && X \ar[rr, "G"'] && Y
\end{tikzcd}
\]
gives a unique filler, which has to be $G_f$, since it is also a filler 
for the square starting at $\Delta^1$.
\end{proof}

Finally we need to establish also the corresponding result for isomorphisms 
in $X_n$:  
given $\lambda \cong \mu$ in $X_n$, Lemma~\ref{lemma: functorial of I in arrows} gives isomorphisms of 
(subdivided) intervals 
\begin{equation}\label{eq:5.12triang}
\begin{tikzcd}[column sep={2.5em,between origins}]
 & \Delta^n \ar[ld, -act, "\phi_{\lambda}"'] \ar[rd, -act, "\phi_{\mu}"] &  \\
I_f \ar[rr, "\sim"] && I_g
\end{tikzcd}
\qquad
\begin{tikzcd}[column sep={2.5em,between origins}]
 & \Delta^n \ar[ld, -act, "\phi_{G\lambda}"'] \ar[rd, -act, "\phi_{G\mu}"] &  \\
I_{Gf} \ar[rr, "\sim"] && I_{Gg}.
\end{tikzcd}
\end{equation}
\begin{lem}\label{lem:Giso}
Let $G \colon X \rightarrow Y$ be a simplicial map between rigid decomposition groupoids. For any $f \cong g$
in $X_1$, the diagram
  \begin{equation}\label{IfIg}
  \begin{tikzcd}
  I_{f} \ar[r, -act, "G_f"] \ar[d, "\sim"'] & I_{Gf} \ar[d, "\sim"]  \\
  I_{g} \ar[r, -act, "G_g"'] & I_{Gg}
  \end{tikzcd}
  \end{equation}
commutes. Here the horizontal arrows are given by Lemma \ref{lem:3.18} and the vertical arrows by Lemma \ref{lemma: functorial of I in arrows}. 
\end{lem}

\begin{proof}
  The diagram
  \[
  \begin{tikzcd}[column sep={4.5em,between origins}, row sep={4em,between origins}]
  \Delta^{n} \ar[rrr, bend right=15, "\phi_{G\mu}"']
  \ar[d, -act, "\phi_\lambda"'] \ar[rr, "\phi_{G\lambda}"]  
  \ar[rd, "\phi_{\mu}"] 
  && I_{Gf} \ar[r, "\sim"]  &I_{Gg}  \ar[d, "\mfunctor_{Gg}"] \\
  I_f \ar[r, "\sim"] & I_g \ar[r, "\mfunctor_g"'] & X \ar[r, "G"'] & Y
  \end{tikzcd}
  \]
  commutes: the middle pentagon region is \eqref{G}, and the triangles are 
  \eqref{eq:5.12triang}. Inside
  the outer square we have the following two dotted maps:
  \[
  \begin{tikzcd}[column sep={4.5em,between origins}, row sep={4em,between origins}]
  \Delta^{n} \ar[d, -act, "\phi_\lambda"'] \ar[rr, "\phi_{G\lambda}"]  && 
  I_{Gf} \ar[r, "\sim"] & 
  I_{Gg} \ar[d, "\mfunctor_{Gg}"]  \\
 I_f \ar[rru, dotted] \ar[r, "\sim"] & I_g \ar[rru, dotted] \ar[r, "\mfunctor_g"'] & X \ar[r, "G"'] & Y.
  \end{tikzcd}
  \]
  The two triangle-shaped regions with dotted arrows also commute: the
  leftmost triangle is the triangle part of \eqref{G} for $\lambda$, and
  the rightmost `triangle' is the square part of \eqref{G} for $\mu$. The
  dotted parallelogram is now forced to commute, since both composites in
  it are fillers for the outer square, and by orthogonality (Lemma \ref{ortoghonalfactsystem}) only one filler
  can exist as $\phi_\lambda$ is stretched and $\mfunctor_{Gg}$ is strict culf.
\end{proof}

So now we completely control the $G$-maps in each simplicial degree 
individually. 

We shall also establish the naturality in simplicial operators:
We have seen (in Lemma~\ref{precomposeinterval}) 
that for any $p \colon \Delta^{n'} \to \Delta^n$, there is 
induced a canonical culf map $c^p_\lambda \colon I_{f'} \to I_f$ compatible like this:
\begin{equation}
\begin{tikzcd}[column sep={2.5em,between origins}]
\Delta^{n'} \ar[rr, "p"] \ar[d, -act, "\phi_{\lambda'}"'] && \Delta^n 
\ar[d, -act, "\phi_{\lambda}"] \\
I_{f'} \ar[rr, "c^p_\lambda"'] && I_f. 
\end{tikzcd}
\end{equation}

The following lemma shows that these functorialities are compatible.

\begin{lem}\label{lem:GXY}
Let $G \colon X \rightarrow Y$ be a simplicial map between rigid decomposition groupoids. From the situation
  \[
  \begin{tikzcd}[column sep={2.5em,between origins}]
	\Delta^{n'} \ar[rr, "p"] \ar[rd, "\lambda'"'] && \Delta^n \ar[ld, 
	"\lambda"] \\
   & X \ar[d, "G"] & \\
   & Y, &  
  \end{tikzcd}
  \]
  we get a commutative square
  \[
  \begin{tikzcd}
  I_{f'} \ar[r, "c^p_\lambda"] \ar[d, "G_{f'}"'] & I_f \ar[d, "G_{f}"]  \\
  I_{Gf'} \ar[r, "c^p_{G\lambda}"'] & I_{Gf}
  \end{tikzcd}
  \]
  involving the maps from the previous functorialities.
\end{lem}

\begin{proof}
  The diagram
  \[
  \begin{tikzcd}[column sep={4.5em,between origins}, row sep={3em,between origins}]
  \Delta^{n'} \ar[dd, -act, "\phi_{\lambda'}"'] \ar[r, "p"]  & \Delta^n 
  \ar[r, -act, "\phi_{\lambda}"] &I_f \ar[d] \ar[r, "G_f"] & 
  I_{Gf} \ar[dd, "\mfunctor_{Gf}"]  \\
  && X \ar[rd] & \\
  I_{f'}  \ar[rru] \ar[r, "G_{f'}"'] & I_{Gf'} \ar[rr, 
  "\mfunctor_{Gf'}"'] && Y
  \end{tikzcd}
  \]
  commutes: the pentagon by construction of the $\phi$-maps (Proposition \ref{3existencecell}),
  and the two squares by Equation~\eqref{G}.
  
  The outer square has the following two dotted $c$-maps:
  \[
  \begin{tikzcd}[column sep={4.5em,between origins}, row sep={4em,between origins}]
  \Delta^{n'} \ar[d, -act, "\phi_{\lambda'}"'] \ar[r, "p"]  & \Delta^n 
  \ar[r, -act, "\phi_{\lambda}"] &I_f \ar[r, "G_f"] & 
  I_{Gf} \ar[d, "\mfunctor_{Gf}"]  \\
  I_{f'} \ar[rru, dotted] \ar[r, "G_{f'}"'] & I_{Gf'} \ar[rru, dotted]\ar[rr, 
  "\mfunctor_{Gf'}"'] && Y.
  \end{tikzcd}
  \]
  The two triangle-shaped 
  regions with dotted arrows commute by construction of the $c$-maps 
  (Lemma~\ref{precomposeinterval}).
  The dotted parallelogram is now forced to commute, since both 
  composites in it are fillers for the outer square, and only one filler 
  can exist, as $\phi_{\lambda'}$ is stretched and $\mfunctor_{Gf}$ is strict 
  culf.
\end{proof}

\begin{blanko}
{Interval construction of an interval}
\label{subsec: interval cons of interval}
\end{blanko}
\setcounter{equation}{0}

Let $A$ be a discrete algebraic interval (simplicial set), and consider a subdivision of it,
$a: \Delta^n \actto A$. This whole data describes an $n$-simplex in $U$,
which we denote $\underline a : \Delta^n \to U$. Note that the long edge 
of $\underline a$ is $A$ itself.

We can now apply Proposition \ref{3existencecell} to $\underline a$ (as an $n$-simplex in $U$) to get
\[
\begin{tikzcd}
\Delta^n \ar[r, -act, "\phi_{\underline a}"] \ar[rd, "\underline a"']& 
I^U_A \ar[d, "\mfunctor_A"]  \\
 & U.
\end{tikzcd}
\]

\begin{lem}\label{lem:AIA}
  There is a canonical isomorphism $A \simeq I^U_A$ compatible
  with the subdivision:
\begin{equation}\label{AIA}
\begin{tikzcd}[column sep={2.5em,between origins}]
 & \Delta^n \ar[ld, -act, "a"'] \ar[rd, -act, "\phi_{\underline a}"] &  \\
A \ar[rr, "\sim"] && I^U_A
\end{tikzcd}
\end{equation}
\end{lem}

\begin{proof}
  There is a canonical simplicial map $A \to \Dec_\bot\Dec_\top U$, given 
  by sending an $n$-simplex $\lambda \colon \Delta^n \to A$ to the corresponding 
  stretched $(n{+}2)$-simplex $\overline \lambda \colon \Delta^{n+2} \actto A$,
  interpreted as an $(n{+}2)$-simplex in $U$. This simplicial map clearly 
  factors through $I_A^U \to \Dec_\bot\Dec_\top U$. We claim that the 
  induced simplicial map $A \to I_A^U$ is an isomorphism. Indeed, 
  $(I^U_A)_n$ is by definition the strict pullback
  \[
  \begin{tikzcd}
  (I^U_A)_n \drpullback \ar[d] \ar[r] & 1 \ar[d, "\name{A}"]  \\
  U_{n+2} \ar[r, -act] & U_1
  \end{tikzcd}
  \]
  which is to say that it is the groupoid of 
  stretched maps $\Delta^{n+2}\actto A$, in turn isomorphic to the
  groupoid of general maps $\Delta^n \to A$, which is the groupoid $A_n$.
\end{proof}

\begin{lem}\label{lem:AIAnat}
For any stretched isomorphism of intervals $A \cong B$, we 
get from
Lemma~\ref{lemma: functorial of I in arrows} an isomorphism $I_A \cong I_B$. The statement is that
these isos are compatible, meaning that the diagram
\[
\begin{tikzcd}
A \ar[d] \ar[r] & I_A \ar[d]  \\
B \ar[r] & I_B
\end{tikzcd}
\]
commutes.
\end{lem}

\begin{proof}
  This follows since the isomorphisms involved
  $$
  (I^U_A)_n \simeq \Map^{\operatorname{str}}(\Delta^{n+2},A) \simeq 
  \Map(\Delta^n,A) = A_n
  $$
  are all natural in stretched isomorphisms $A \simeq B$.
\end{proof}

A simplicial map $G \colon X \rightarrow Y$ between decomposition groupoids is \emph{full and faithful} if for all objects $x, y \in X$ it induces an equivalence on the mapping groupoids
$$G_{x,y} \colon \Map_X(x,y) \rightarrow \Map_Y(Gx, Gy).$$

By the way it was defined $U_X$, we have a canonical simplicial map $\jmath \colon U_X \rightarrow {U}$, defined by $\jmath(I_f, \phi_\lambda) = (I_f, \phi_\lambda)$ for $(I_f, \phi_\lambda) \in (U_X)_n$ and $\jmath F = F$ for $F \in ({U_X})_n$. It is straightforward to prove the following result.

\begin{lem}\label{comparasionfunctor}
Let $X$ be a rigid decomposition groupoid. Then the simplicial map $\jmath \colon {U_X} \rightarrow {U}$ is full and faithful.
\end{lem}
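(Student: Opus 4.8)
The plan is to reduce the statement to a level-wise check: since the mapping groupoids in the sense of the definition are computed inside the groupoids $(U_X)_n$ and $U_n$, it suffices to show that for each $n\ge 0$ the functor of groupoids $\jmath_n\colon (U_X)_n \to U_n$ is fully faithful, i.e. that for any two objects $(I_f,\phi_\lambda),(I_g,\phi_\mu)$ of $(U_X)_n$ (so $\lambda,\mu\in X_n$, $f=\longt(\lambda)$, $g=\longt(\mu)$) the induced map on morphism sets is a bijection. Because $\jmath$ is defined by $\jmath(I_f,\phi_\lambda)=(I_f,\phi_\lambda)$ and $\jmath F = F$, it is injective on objects and acts as the identity on morphisms, so faithfulness is immediate. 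The entire content is fullness: I must show that every morphism $\jmath(I_f,\phi_\lambda)\to\jmath(I_g,\phi_\mu)$ in $U_n$ already arises from a morphism in $(U_X)_n$.

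Unwinding the two definitions, a morphism $\jmath(I_f,\phi_\lambda)\to \jmath(I_g,\phi_\mu)$ in $U_n$ is an invertible CULF functor $F\colon I_f\to I_g$ with $F\phi_\lambda=\phi_\mu$, whereas a morphism $(I_f,\phi_\lambda)\to(I_g,\phi_\mu)$ in $(U_X)_n$ is a \emph{stretched} isomorphism $F$ with the same property. Hence fullness amounts to the claim that any invertible CULF map $F\colon I_f\to I_g$ satisfying $F\phi_\lambda=\phi_\mu$ is automatically stretched. This is the step I expect to carry the weight.

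To prove it I would use that $\phi_\lambda$ and $\phi_\mu$ are the canonical stretched simplices produced by Proposition~\ref{3existencecell}: viewed as maps of intervals $\phi_\lambda\colon\Delta^n \actto I_f$ and $\phi_\mu\colon\Delta^n\actto I_g$, they send $0\mapsto\bot$, $n\mapsto\top$, and the long edge $0\to n$ of $\Delta^n$ to the chosen edge $\ledge$. From $F\phi_\lambda=\phi_\mu$ one then reads off $F(\bot_{I_f})=\phi_\mu(0)=\bot_{I_g}$, $F(\top_{I_f})=\top_{I_g}$, and $F(\ledge_{I_f})=\longt(F\phi_\lambda)=\longt(\phi_\mu)=\ledge_{I_g}$; so $F$ respects the chosen initial and terminal objects and the chosen edge at the level of objects and of the edge. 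It remains to see that $F$ respects the \emph{section data} of the chosen initial and terminal structures. Here I would invoke Lemma~\ref{existenceinitialIF}: on $I_f$ and $I_g$ these sections are the canonical ones coming from the double décalage, namely the degeneracy-type maps $s_{s_\bot}$ and $s_{s_\top}$. Since $F$ is a simplicial map it commutes with these degeneracies, so once it matches $\bot,\top,\ledge$ it automatically intertwines the sections, and therefore $F$ is stretched. This finishes fullness, and hence the lemma.

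The main obstacle is exactly this last point: confirming that preservation of $\bot$, $\top$ and $\ledge$ upgrades a bare CULF isomorphism to a stretched one, with no hidden incompatibility in the section data. Everything else (injectivity on objects, faithfulness, and the translation between the descriptions of $U_X$ and $U$) is bookkeeping. As a conceptual cross-check one can bypass the hands-on verification altogether: by Theorem~\ref{isoUcargrotUX} the functor $E\colon \mathcal{U}^{\cart}_X \to \Delta_{/U_X}$ is an equivalence, and $\mathcal{U}^{\cart}_X$ is by construction a \emph{full} subcategory of $\mathcal{U}^{\cart}$; under straightening, $\jmath$ corresponds to this full inclusion, which is fully faithful, with Lemma~\ref{intervalisocat} and Remark~\ref{culfconditiondiagonalortoghonal} ensuring that the CULF morphisms between such objects are precisely the stretched ones.
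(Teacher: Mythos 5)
The paper gives no argument for this lemma at all---it is simply declared ``straightforward to prove''---so your proposal is supplying content the paper omits rather than paralleling an existing proof. Your reading of \emph{full and faithful} as the levelwise statement (each $\jmath_n\colon (U_X)_n\to U_n$ fully faithful as a functor of groupoids) is the correct one: it is exactly what Theorem~\ref{corocontractibleU} consumes, since a modification is a family of morphisms living in the groupoids $(U_X)_n$. Faithfulness is indeed immediate, and you correctly isolate the whole content as the claim that an invertible CULF map $F\colon I_f\to I_g$ with $F\phi_\lambda=\phi_\mu$ is automatically stretched. Note that this step is literally Lemma~\ref{stretchedtriangle} applied to the triangle $F\circ\phi_\lambda=\phi_\mu$ with $S=\phi_\lambda$ stretched and $\phi_\mu$ stretched, so you can cite that lemma instead of re-deriving the vertex-by-vertex computation. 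The one imprecise point is your handling of the section data: the maps $s_{s_\bot}$ and $s_{s_\top}$ on $I_f$ are \emph{extra} degeneracies coming from the ambient double decalage of $X$, not part of the simplicial structure of $I_f$ as an abstract simplicial set, so ``$F$ is a simplicial map, hence commutes with these degeneracies'' is not a valid inference. What actually closes this gap is that in the discrete setting the section attached to a chosen initial (resp.\ terminal) object is unique---an initial object of a Segal set admits exactly one edge to each object, so a strict section of $d_\bot v\colon \mathcal{C}_{\bot/}\to\mathcal{C}$ is determined---whence preservation of $\bot$, $\top$ and $\ledge$ forces preservation of the sections; this is also consistent with how the paper itself verifies stretchedness elsewhere (e.g.\ in the proof of Lemma~\ref{stretculffactarrow}, where only the objects and the long edge are checked). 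With that repair your argument is complete and correct.
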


\begin{rema}\label{rema: explication what is I}
G\'alvez-Carrillo, Kock and Tonks~\cite{GTK3} defined the culf \emph{classifying map} $I' \colon \simplexcategory_{/X} \rightarrow \mathcal{U}$. It takes an $n$-simplex $\lambda \colon \Delta^n \rightarrow X$ to an $n$-subdivided interval $\phi_\lambda \colon \Delta^n \actto I_f$ in ${\mathcal{U}}$ (or to the pair $(I_f, \phi_\lambda)$ in $U_n$). Here $f = \longt(\lambda)$ and $\simplexcategory_{/X}$  denotes the Grothendieck construction of $X$. 
In the present paper $I '$ is the map $(\jmath \circ I) \colon X \rightarrow U$,  since  for each $\lambda \in X_n$, we have that $(\jmath \circ I)(\lambda) = (I_f, \phi_\lambda)$ which is the same as $I'(\lambda)$. We will abuse notation and denote $I'$ as $I$ in Section \ref{sec:GKTconjecture}. 
\end{rema}

\begin{blanko}
{Comparison with a strictification of $U$} 
\label{subsec:strictification}
\end{blanko}

In this section, we briefly compare our local strict model $U_X$ with a global strictification $\tildeb{U}$ of $U$, proposed by the referee. 

There is a well-known construction that replaces a pseudo-functor into $\Grpd$ with a strict functor (see for example \cite[\S 6.4]{gambino_2008}). In the present case there is a very explicit combinatorial description of such a strictification. An inert map from $\Delta^k$ to $\Delta^n$ is completely  determined by the values of $0$ and $k$. So we will denote an inert map $\Delta^k \rightarrowtail \Delta^n$ as a pair $(i, j) \colon \Delta^k \rightarrowtail \Delta^n$ such that $0 \mapsto i$ and $k \mapsto j$. We denote by $P_n$ the poset of inert faces of $\Delta^n$. We define $\tildeb{U}_n$ to be the groupoid of liftings
\[\begin{tikzcd}
	& {\mathcal{U}} \\
	{P_n} & \simplexcategory.
	\arrow[from=2-1, to=2-2]
	\arrow["\dominio", from=1-2, to=2-2]
	\arrow[dashed, from=2-1, to=1-2]
\end{tikzcd}\]
This gives a whole family of stretched maps $\Delta^k \actto \mathcal{C}$, one for each $(i,j) \in P_n$, and squares
\[\begin{tikzcd}
	{\Delta^k} & {\Delta^{k'}} \\
	{\mathcal{C}_{ij}} & {\mathcal{C}_{i'j'}}
	\arrow[-act, from=1-1, to=2-1]
	\arrow[tail, from=1-1, to=1-2]
	\arrow["\CULF"', from=2-1, to=2-2]
	\arrow[-act, from=1-2, to=2-2]
\end{tikzcd}\]
for each map $(i,j)  \rightarrowtail (i',j')$. Here $\mathcal{C}_{ij}$ and $\mathcal{C}_{i'j'}$ are discrete algebraic intervals. For example an object in $\tildeb{U}_2$ is pictured as follows
\[\begin{tikzcd}[row sep=tiny,column sep=tiny]
	&&&& {\Delta^0} \\
	&&&&& {\mathcal{C}_{22}} \\
	&& {\Delta^0} && {\Delta^1} \\
	&&& {\mathcal{C}_{11}} && {\mathcal{C}_{12}} \\
	{\Delta^0} && {\Delta^1} && {\Delta^2} \\
	& {\mathcal{C}_{00}} && {\mathcal{C}_{01}} && {\mathcal{C}_{02}}.
	\arrow[from=2-6, to=4-6]
	\arrow[from=4-6, to=6-6]
	\arrow[-act, from=3-5, to=4-6]
	\arrow[-act, from=1-5, to=2-6]
	\arrow[tail, from=1-5, to=3-5]
	\arrow[tail, from=3-5, to=5-5]
	\arrow[-act, from=5-5, to=6-6]
	\arrow[-act, from=5-3, to=6-4]
	\arrow[tail, from=5-3, to=5-5]
	\arrow[from=6-4, to=6-6]
	\arrow[-act, from=5-1, to=6-2]
	\arrow[tail, from=5-1, to=5-3]
	\arrow[from=6-2, to=6-4]
	\arrow[-act, from=3-3, to=4-4]
	\arrow[tail, from=3-3, to=5-3]
	\arrow[from=4-4, to=6-4]
	\arrow[tail, from=3-3, to=3-5]
	\arrow[from=4-4, to=4-6]
\end{tikzcd}\]
It is possible to define face and degeneracy maps between the groupoids $\tildeb{U}_n$ to assemble them into a strict simplicial groupoid $\tildeb{U}$. Informally, the face map $d_i$ acts by `erasing' all stretched maps containing an index $i$. The degeneracy maps $s_i$ repeat the $i$th row and the $i$th column. We have a canonical equivalence $\piend \colon \tildeb{U} \rightarrow U$ that on objects erases all the stretched maps except the last one. In case we consider only intervals that come from a fixed strict decomposition groupoid $X$, we have a strict simplicial groupoid $\tildeb{U}_X$ and a canonical equivalence $\piend' \colon \tildeb{U}_X \rightarrow U_X$. 

The interval construction $I \colon X \rightarrow U_X$ from \cite{GTK3} can easily be factored through $\tildeb{U}_X$ to give a refined interval construction $\tildeb{I} \colon X \rightarrow \tildeb{U}_X$ that sends an $n$-simplex $\lambda \colon \Delta^n \rightarrow X$ to $(I_{\lambda c}, \phi_{\lambda c})$ for each $c \colon \Delta^k \rightarrowtail \Delta^n \in P_n$. Here $\phi_{\lambda c}$ is given by Proposition \ref{3existencecell}. For example, for a $1$-simplex $f \colon \Delta^1 \rightarrow X$, the object $\tildeb{I}(f)$ in $(\tildeb{U}_X)_1$ is given by the following diagram
\[\begin{tikzcd}[column sep=small]
	& {\Delta^0} \\
	{\Delta^0} & {\Delta^1} & {I_{fd^1}} \\
	& {I_{fd^0}} & {I_f}.
	\arrow[-act, "{\phi_{fd^0}}"', from=2-1, to=3-2]
	\arrow[tail, from=2-1, to=2-2]
	\arrow[tail, from=1-2, to=2-2]
	\arrow[-act, "{\phi_{fd^1}}", from=1-2, to=2-3]
	\arrow[-act, "{\phi_{f}}"'{description}, from=2-2, to=3-3]
	\arrow[from=2-3, to=3-3]
	\arrow[from=3-2, to=3-3]
\end{tikzcd}\]
Note that since $U_X$ is already strict, all this refined data is redundant.

The four versions of $U$ and the four interval constructions are compatible, as indicated in the commutative diagram
\[\begin{tikzcd}
	& {\tilde{U}_X} & {\tilde{U}} \\
	X \\
	& {U_X} & U.
	\arrow["I"', from=2-1, to=3-2]
	\arrow["\jmath"', hook, from=3-2, to=3-3]
	\arrow["{\jmath'}", hook, from=1-2, to=1-3]
	\arrow["{\tilde{I}}", from=2-1, to=1-2]
	\arrow["\piend"{description}, from=1-2, to=3-2]
	\arrow["\piend", from=1-3, to=3-3]
\end{tikzcd}\]
The original $U$ is hard to work with, as it is pseudo-simplicial instead of strict simplicial. Both
$\tildeb{U}_X$ and $U_X$ are practical because they are strict. ($\tildeb{U}_X$ is strict but is too redundant.) In this paper we prefer to work with $U_X$ since in any case most of the arguments are carried out locally at $X$, and in this situation it is the most direct approach.
 
\setcounter{equation}{0}

\section{G\'alvez--Kock--Tonks Conjecture}\label{sectionGKT}
\label{sec:GKTconjecture}
Let $\cDcmp$ denote the $\infty$-category of complete decomposition spaces and culf maps.   The construction of the complete decomposition groupoid $U$ was motivated by the following statement:  \\

\noindent \textbf{G\'alvez--Kock--Tonks Conjecture} \cite[\S 5.4]{GTK3}
For each decomposition space $X$, the space $\Map_{\cDcmp}(X,U)$ is contractible. \\

\noindent A partial result states that $\Map(X,U)$ is connected.
An $\infty$-version of this result is Theorem~5.5 in \cite{GTK3}. We 
include a proof here for two reasons. Firstly, we need to be more precise
regarding strictness conditions, and secondly, the proof in \cite{GTK3} 
does not actually give any argument for naturality in inert maps. As we 
shall see, this is a subtle issue, and the lack of argument in \cite{GTK3} 
may be considered a gap in that proof.

In our setting of rigid decomposition spaces, the relevant maps are
the strict culf maps. We are now concerned with culf maps to $U$.
Recall that $U \colon \simplexcategory^{\op} \to \Grpd$ is only pseudo-simplicial, 
but that it is actually strict on active maps. Furthermore,
for active $[n']\actto [n]$, the corresponding $U_n \to U_{n'}$ is a fibration.
The notion of strict culf map $J \colon X \to U$ is therefore still meaningful:
we do allow pseudo-simplicial maps, but they are still
required to be strict on the active part, and the naturality squares on 
active maps are required to be strict pullbacks. 
This implies in particular that for the unique
active map $\operatorname{long} \colon [1]\actto [n]$, and for every $n$-simplex 
$\lambda\in X_n$ with long edge $f = \operatorname{long}(\lambda)$, 
we have a strict equality
\begin{equation}\label{longJn=J1}
\operatorname{long}( J_n(\lambda) ) = J_1(f) .
\end{equation}
For general $p \colon [n'] \to [n]$ in $\simplexcategory$ (not necessarily 
active) it follows that
$J$ takes a strict triangle
  \[
  \begin{tikzcd}[column sep={2.5em,between origins}]
  \Delta^{n'} \ar[rr, "p"] \ar[rd, "\lambda'"'] && \Delta^n \ar[ld, "\lambda"]  \\
   & X
  \end{tikzcd}
\]
to
a commutative square
of the form
\begin{equation}\label{J-cart}
\begin{tikzcd}[column sep={2.5em,between origins}]
\Delta^{n'} \ar[rr, "p"] \ar[d, -act, "J_{n'}(\lambda')"'] && \Delta^n 
\ar[d, -act, "J_n(\lambda)"]  \\
J_1(f') \ar[rr, "e"']  && J_1(f)
\end{tikzcd}
\end{equation}
with $e$ culf. (This is to say, it is a
cartesian morphism for the right fibration 
$\mathcal{U}^{\operatorname{cart}} \to \simplexcategory$.)

\begin{teo} \label{conecctedconjecture}
 For any rigid decomposition groupoid $X$, the groupoid
  $\Map(X,U)$ of strict culf maps from $X$ to $U$ is connected. More
  precisely, for any strict culf map $J \colon X \to U$, there is a natural
  transformation (actually a modification) $\Gamma \colon J \isopil I$.
\end{teo}

\begin{proof}
Recall that $I$ was defined in Remark \ref{rema: explication what is I}.  There are three steps in the proof: Step 1 is to establish a canonical
  isomorphism $J_1(f) \cong I_1(f)$ for each $f\in X_1$, and show that
  this is natural in arrows in $X_1$. Step 2 is to exploit culfness to
  extend this isomorphism canonically to $J_n(\lambda) \cong I_n(\lambda)$
  for each $\lambda\in X_n$ (again naturally in $\lambda$). The
  construction in Step 2 actually shows that these isomorphisms are natural
  in active maps in $\simplexcategory$. But in any case, Step 3 consists in
  showing that the isomorphisms are natural in {\em all} maps in
  $\simplexcategory$, meaning that for any $p \colon [n'] \to [n]$ in
  $\simplexcategory$, the naturality square
  \[
  \begin{tikzcd}
\Delta^{n'} \ar[r, "p"] \ar[d, -act, "a'"'] & \Delta^n 
\ar[d, -act, "a"]  \\  J_1(f') \ar[r] \ar[d, "\sim"] & J_1(f) \ar[d, "\sim"]  \\
  I_1(f') \ar[r] & I_1(f)
  \end{tikzcd}
  \]
  commutes (as will be detailed).
  
{\bf Step 1.} Given $f\in X_1$,
we construct isomorphisms
$$
I^X_f \cong  I^U_{J_1(f)} \cong J_1(f) .
$$
Here the first isomorphism is an instance of Lemma~\ref{precomposeinterval}, where $J \colon X\to 
U$ plays the role of $G \colon X \to Y$. The second isomorphism is an instance of
Lemma~\ref{lem:AIA}, where $J_1(f)$ plays the role of $A$.

We should now argue why these isomorphisms are natural in arrows in 
$X_1$: given $f \cong g$, we need to check that this
naturality square commutes:
\[
\begin{tikzcd}
J_1(f) \ar[d] \ar[r] & I_1(f) \ar[d]  \\
J_1(g) \ar[r] & I_1(g)
\end{tikzcd}
\]
Since the vertical isomorphisms are composites of isos from 
Lemma~\ref{lem:3.18} and from Lemma~\ref{lem:AIA}, the naturality in maps 
inside $X_1$ follows from the naturality expressed by Lemma~\ref{lem:Giso} and
Lemma~\ref{lem:AIAnat}.

{\bf Step 2.} We now show that these isomorphisms $J_1(f)\cong I_1(f)$
extend to isomorphisms $J_n(\lambda) \cong I_n(\lambda)$ for each $n$,
using that both $I$ and $J$ are strict culf. We have
\[
\begin{tikzcd}
X_1 \ar[d] & X_n \dlpullback \ar[l, -act] \ar[d]  \\
(U_X)_1 & (U_X)_n \ar[l, -act]
\end{tikzcd}
\]
Since these horizontal maps are fibrations, we can describe the objects in 
$(U_X)_n$ as follows.
To give an object $J_n(\lambda)$ in $(U_X)_n$ is to give the underlying interval
$J_1(f)$ and an object in the fibre over $J_1(f)$. Since the square is a 
strict pullback, to give an object in the fibre of the bottom horizontal 
map is the same as giving an object in the fibre over $f$ of the top 
horizontal maps, i.e.~a subdivision, i.e.~an object $\lambda\in X_n$.
This same description holds for $I$.
So to give, for a fixed $\lambda\in X_n$, an isomorphism 
$J_n(\lambda) \isopil I_n(\lambda)$ is to give an isomorphism
$J_1(f) \isopil I_1(f)$, and keep the $\lambda$ in the fibres fixed.

As in Step 1, we should now argue why these isomorphisms are natural in arrows in 
$X_n$: given $\lambda\cong \mu$ in $X_n$, we need to check that this 
naturality square commutes:
\[
\begin{tikzcd}
J_n(\lambda) \ar[d] \ar[r] & I_n(\lambda) \ar[d]  \\
J_n(\mu) \ar[r] & I_n(\mu)
\end{tikzcd}
\]
The argument is the same as that given in degree $1$, but invoking now 
Lemma~\ref{lem:3.18-subdiv} instead of Lemma~\ref{lem:3.18}.

Note that the isomorphisms are natural in all 
active maps $[n] \actto [1]$ by construction, and therefore, by the 
standard prism-lemma argument, are also natural in all active maps.

{\bf Step 3.}
The final step is to show that the 
isomorphisms are also natural in inert maps, and in fact we prove 
uniformly that they are natural in all maps $p : [n'] \to [n]$ in 
$\simplexcategory$.
Given $\lambda: \Delta^n \to X$ (with long edge $f$) and a map $p: 
\Delta^{n'} \to \Delta^n$, put $\lambda':=\lambda\circ p$ (with long 
edge $f'$), so that we have
  \[
  \begin{tikzcd}[column sep={2.5em,between origins}]
  \Delta^{n'} \ar[rr, "p"] \ar[rd, "\lambda'"'] && \Delta^n \ar[ld, "\lambda"]  \\
   & X
  \end{tikzcd}
\]
which is sent by $J$ to
\begin{equation}\label{e=culf}
\begin{tikzcd}[column sep={3em,between origins}]
\Delta^{n'} \ar[rr, "p"] \ar[d, -act, "a'"'] && \Delta^n 
\ar[d, -act, "a"]  \\
J_1(f') \ar[rr, "{e \ \text{\rm culf}}"']  && J_1(f) .
\end{tikzcd}
\end{equation}

By Step 1 we have isomorphisms in each simplicial degree, 
which are strictly compatible with the 
subdivisions by Step 2, to give commutative triangles
\begin{equation}
\begin{tikzcd}[column sep={5em,between origins}]
 & \Delta^n \ar[ld, -act] \ar[d, -act] \ar[rd, -act] &  \\
J_1(f') \ar[r, "\sim"] & I^U_{J_1(f')} \ar[r, "\sim"] & I_1(f')
\end{tikzcd}
\qquad
\begin{tikzcd}[column sep={5em,between origins}]
 & \Delta^n \ar[ld, -act] \ar[d, -act] \ar[rd, -act] &  \\
J_1(f) \ar[r, "\sim"] & I^U_{J_1(f)} \ar[r, "\sim"] & I_1(f) .
\end{tikzcd}
\end{equation}

These diagrams together with Lemma~\ref{precomposeinterval} ensure that the following outer 
rectangle commutes:
  \[
  \begin{tikzcd}[column sep={5.5em,between origins}, row sep={4em,between origins}]
  \Delta^{n'} \ar[d, -act, "a'"'] \ar[r, "p"]  & \Delta^n 
  \ar[r, -act, "a"] &J_1(f) \ar[r, "\sim"] & I^U_{J_1(f)} \ar[r, "\sim"] & 
  I_1(f) \ar[d, "\mfunctor_f"]  \\
  J_1(f') \ar[rru, dotted, "e"] \ar[r, "\sim"] &I^U_{J_1(f')} 
  \ar[r, "\sim"] & I_1(f') \ar[rru, dotted, "c^p_\lambda"']\ar[rr, 
  "\mfunctor_{f'}"'] && X
  \end{tikzcd}
  \]
   We furthermore have the two diagonal dotted arrows indicated.
The leftmost triangle-shaped region is \eqref{e=culf}, and the right-most
triangle is given in Lemma \ref{precomposeinterval}.
  The composed dotted parallelogram is now forced to commute, since the 
  composites in it are fillers for the outer square, and only one filler 
  can exist since $a'$ is stretched and $\mfunctor_f$ is strict 
  culf.

  The dotted arrows are the cartesian lifts of $p$ to $J_1(f)$ and 
  $I_1(f)$, and the commutativity (by Lemma \ref{lem:GXY})  of 
  \[
  \begin{tikzcd}
\Delta^{n'} \ar[r, "p"] \ar[d, -act, "a'"'] & \Delta^n 
\ar[d, -act, "a"]  \\  J_1(f') \ar[r, "e"] \ar[d, "\sim"] & J_1(f) \ar[d, "\sim"]  \\
  I_1(f') \ar[r, "c^p_\lambda"'] & I_1(f)
  \end{tikzcd}
  \]
  now shows that the isomorphisms $J_n\isopil I_n$ are natural in
  $p$ (and thereby with the whole simplicial structure).
\end{proof}


\begin{blanko}
{Modifications}
\label{subsec:modif}
\end{blanko}

Theorem \ref{conecctedconjecture} implies that every natural transformation from $X$ to $U$ is isomorphic to $I$. Therefore, to prove the conjecture, we only need to prove that $I$ does not admit other self-modifications than the identity. Thus, we will introduce the notion of modification in the context in which we need it.

\noindent A modification between two natural transformations is a family of $2$-cells in the $2$-category of (small) categories that satisfies some coherence conditions as indicated in  the following definition:

\begin{defi}\cite[Definition 7.3.1]{Bor1} \label{definitionmodif}
Let $\mathcal{C}$ and $\mathcal{D}$ be two $2$-categories. Let $F,G \colon \mathcal{C} \rightarrow \mathcal{D}$ be two functors and $\alpha, \beta \colon F \Rightarrow G$ be two natural transformations from $F$ to $G$. A modification $\Gamma \colon \alpha \Rightarrow \beta$ assigns to each object $x$ in  $\mathcal{C}$ a $2$-cell $\Gamma_x \colon \alpha_x \rightarrow \beta_x$ of $\mathcal{D}$ compatibly with the 2-cell components of $F$ and $G$ in the sense of the equation
\begin{center}
\begin{tikzcd}
F(x) \arrow[r,shift left, bend left=20, ""{name=U}] \arrow[r,shift right, bend right=20, ""{name =D}] \arrow[d, "F(f)"']& G(x) \arrow[d, "G(f)"]
 \arrow[Rightarrow, from=U, to =D,shorten <=2pt, "\; \Gamma_x"] \\
 F(y) \arrow[r,""{name=D2}] & G(y)  \arrow[Rightarrow, from=D, to =D2, shorten <=4pt, "\; \beta_f"]
\end{tikzcd} = \begin{tikzcd}
F(x) \arrow[r,""{name=D2}] \arrow[d, "F(f)"']& G(x) \arrow[d, "G(f)"]
 \\
 F(y) \arrow[r,shift left, bend left=20, ""{name=U}] \arrow[r,shift right, bend right=20, ""{name =D}]  & G(y).  \arrow[Rightarrow, from=U, to =D, shorten <=2pt, "\; \Gamma_y"]
  \arrow[Rightarrow, from=D2, to =U, shorten <=4pt, "\; \alpha_f"]
\end{tikzcd}

\end{center}
\end{defi}
We are interested in the case where $\mathcal{C} = \simplexcategory$ and $\mathcal{D} = \Grpd$, and where $F = X$ and $G = U_X$, and where $\alpha$ and $\beta$ are both equal to $I$. In this case, Definition \ref{definitionmodif} can be written as follows.

\begin{defi}\label{defimodififorconj}
A modification $\Gamma \colon I \rightarrow I$ assigns to each $[n]$ in $\simplexcategory$ a natural transformation $\Gamma_n \colon I_n \rightarrow I_n$ in $\Grpd$ such that for each $n \geq 1$ the following equations hold for each $0 \leq i \leq n$ and $0 \leq j < n$
\begin{equation*}\tag{1}
 \begin{tikzcd}
X_n \arrow[rr,shift left, bend left=20, ""{name=U}] \arrow[rr,shift right, bend right=20, ""{name =D}] \arrow[d, "d_i"']& {} & (U_X)_n \arrow[d, "d_i"]
 \arrow[Rightarrow, from=U, to =D, shorten <=2pt, "\Gamma_n" description] \\
 X_{n-1} \arrow[rr,""{name=D2}] & {} & (U_X)_{n-1} 
\end{tikzcd} = \begin{tikzcd}
X_n \arrow[rr,""{name=D2}] \arrow[d, "d_i"']& &  (U_X)_n \arrow[d, "d_i"]
 \\
 X_{n-1} \arrow[rr,shift left, bend left=20, ""{name=U}] \arrow[rr,shift right, bend right=20, ""{name =D}]  & & (U_X)_{n-1}  \arrow[Rightarrow, from=U, to =D, shorten <=2pt, "\; \Gamma_{n-1}" description]
\end{tikzcd} 
\end{equation*}
\begin{equation*}\tag{2}
 \begin{tikzcd}
X_{n-1} \arrow[rr,shift left, bend left=20, ""{name=U}] \arrow[rr,shift right, bend right=20, ""{name =D}] \arrow[d, "s_j"'] & &  (U_X)_{n-1} \arrow[d, "s_j"]
 \arrow[Rightarrow, from=U, to =D, shorten <=2pt,  "\Gamma_{n-1}" description] \\
 X_{n} \arrow[rr,""{name=D2}] & & (U_X)_{n} 
\end{tikzcd} = \begin{tikzcd}
X_{n-1} \arrow[rr,""{name=D2}] \arrow[d, "s_j"']& & (U_X)_{n-1} \arrow[d, "s_j"]
 \\
 X_{n} \arrow[rr,shift left, bend left=20, ""{name=U}] \arrow[rr,shift right, bend right=20, ""{name =D}]  & & (U_X)_{n}.  \arrow[Rightarrow, from=U, to =D, shorten <=2pt, "\Gamma_n" description]
\end{tikzcd}
\end{equation*}
\end{defi}

\begin{rema}\label{rema:notationmod}
 We can define a modification $\Gamma \colon I \rightarrow I$ level by level, so let $\Gamma_n \colon I_n \rightarrow I_n$ be a component of the modification $\Gamma$. Given $\lambda \in X_n$, let $\phi_\lambda$ be the  $n$-simplex induced by $\lambda$ constructed in Proposition \ref{3existencecell} and $f = \longt(\lambda)$. The modification $\Gamma$ assigns to $\lambda$ an invertible morphism $\Gamma_n^\lambda \colon (I_f, \phi_\lambda) \rightarrow (I_f, \phi_\lambda)$ in $(U_X)_n$. The morphism $\Gamma_n^\lambda$ has associated an underlying map $\overline{\Gamma_n^\lambda} \colon I_f \rightarrow I_f$.
 
Let $p \colon [m] \actto [n]$ be an active map. By Remark \ref{rem: exp active pF}, we have that $p^*\overline{\Gamma_n^\lambda} = \overline{\Gamma_n^\lambda}$. This implies that 
\begin{equation*}
\overline{\Gamma_m^{\lambda p}} = \overline{\Gamma_n^\lambda}
\end{equation*}
where $\overline{\Gamma_m^{\lambda p}} \colon I_f \rightarrow I_f$ is the underlying map of $\Gamma_m^{\lambda p}$. The difference between $\Gamma_n^\lambda$ and $\Gamma_m^{\lambda p}$ is that the first one respects the $n$-subdivision $\phi_\lambda$ and the other respects the $m$-subdivision $\phi_{\lambda p}$.
\end{rema}

\begin{lem}\label{teocontractiblefordesset}
Let $X$ be a rigid decomposition groupoid. The mapping groupoid $\Map_{\cDcmp}(X, U_X)$ is contractible.
\end{lem}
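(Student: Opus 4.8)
The plan is to use that $\Map_{\cDcmp}(X,U_X)$ is a $1$-groupoid with a distinguished object $I$, so that its contractibility splits into two assertions: connectedness (every CULF map $X\to U_X$ is isomorphic to $I$) and triviality of the automorphism group of $I$. Connectedness I would inherit from the connectedness result of Theorem~\ref{conecctedconjecture} through the comparison with $U$: the map $\jmath\colon U_X\to U$ of Lemma~\ref{comparasionfunctor} is fully faithful, hence post-composition with $\jmath$ is fully faithful on mapping groupoids and reflects isomorphisms, so $\jmath g\cong \jmath I$ (which holds by Theorem~\ref{conecctedconjecture}) forces $g\cong I$ already in $\Map_{\cDcmp}(X,U_X)$. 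The real content, and where I would concentrate, is to prove that $I$ admits no self-modification other than the identity.

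So let $\Gamma\colon I\Rightarrow I$ be a self-modification as in Definition~\ref{defimodififorconj}. Since each $X_n$ is a discrete groupoid, the component $\Gamma_n$ is precisely a choice, for each $\lambda\in X_n$, of an automorphism $\Gamma_n^\lambda\colon (I_f,\phi_\lambda)\to(I_f,\phi_\lambda)$ in $(U_X)_n$ with $f=\longt(\lambda)$; its underlying map $F_\lambda:=\overline{\Gamma_n^\lambda}\colon I_f\to I_f$ is a stretched CULF isomorphism with $F_\lambda\phi_\lambda=\phi_\lambda$. The goal is to show $F_\lambda=\identity$ for every $\lambda$, since then each $\Gamma_n^\lambda$ is the identity morphism of $(U_X)_n$ and $\Gamma$ is the identity modification.

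The key reduction is that $F_\lambda$ depends only on the long edge $f$. The long-edge inclusion $[1]\to[n]$ is an active map (it preserves endpoints, being a composite of inner cofaces), and $\lambda$ restricted along it is $\longt(\lambda)=f$; hence Remark~\ref{rema:notationmod} gives $F_\lambda=F_f$, where $F_f=\overline{\Gamma_1^f}$. Consequently $F_f\phi_\lambda=F_\lambda\phi_\lambda=\phi_\lambda$ for every $\lambda$ with $\longt(\lambda)=f$, that is, $F_f$ fixes every stretched simplex of $I_f$, because each such simplex is of the form $\phi_\lambda$ (by Proposition~\ref{3existencecell} and the argument of Proposition~\ref{U0toX}, a stretched $\mu$ equals $\phi_{\mfunctor_f\mu}$ with $\longt(\mfunctor_f\mu)=f$). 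To spread this to arbitrary simplices I would invoke Lemma~\ref{existssimplecesinterval}: any $m$-simplex $\sigma$ of the interval $I_f$ fits as $\sigma\simeq d_\bot d_\top(\eta)$ for a stretched $(m+2)$-simplex $\eta$ (characterised by $\longt(\eta)=\ledge_{I_f}$); since $F_f$ is simplicial and fixes $\eta$, it fixes $d_\bot d_\top(\eta)=\sigma$. Thus $F_f$ is the identity on all simplices, so $F_f=\identity$, and therefore $\Gamma=\identity$.

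I expect the main obstacle to be making the reduction ``$F_\lambda$ depends only on $f$'' airtight: one must check that the modification axioms for the active generators (inner cofaces and codegeneracies), packaged in Remark~\ref{rema:notationmod} via the active-map behaviour of $U_X$ (Remark~\ref{remapactivedivision} and Lemma~\ref{definitiondiinmorphismUX}), genuinely force the underlying automorphism to be constant along all active maps landing on a fixed long edge, and that the stretched simplices of $I_f$ are exactly the lifts $\phi_\lambda$. Once these identifications are in place, the interval-filling Lemma~\ref{existssimplecesinterval} does the remaining work, and, as the introduction promises, the final check is short.
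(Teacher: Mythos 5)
Your proposal is correct and follows essentially the same route as the paper: reduce via connectedness to showing $I$ has no nontrivial self-modifications, use the active long-edge map $[1]\actto[n]$ together with Remark~\ref{rema:notationmod} to see that the underlying automorphism $\overline{\Gamma_n^\lambda}$ depends only on $f=\longt(\lambda)$ and fixes all stretched simplices, and then use Lemma~\ref{existssimplecesinterval} to propagate this to all simplices of $I_f$ via $d_\bot d_\top$. The only (welcome) difference is that you make explicit the transfer of connectedness from $\Map_{\cDcmp}(X,U)$ to $\Map_{\cDcmp}(X,U_X)$ through the fully faithful $\jmath$, a step the paper leaves implicit.
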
 

\begin{proof}
Theorem \ref{conecctedconjecture} shows that we only have to prove that $I$ does not admit other self-modifications $\Gamma$ than the identity. Let
$\lambda$ be an $n$-simplex in $X$ and put $f = \longt(\lambda)$. Let $\Gamma$ a modification, with components $\Gamma_n \colon I_n \rightarrow I_n$ and let $\overline{\Gamma_n^\lambda} \colon I_f \rightarrow I_f$ be the underlying map of  $\Gamma_n^\lambda \colon (I_f, \phi_\lambda) \rightarrow (I_f, \phi_\lambda)$ of Remark \ref{rema:notationmod}.

Since $\longt \colon [1] \actto [n]$ is an active map in $\simplexcategory$, by Remark \ref{rema:notationmod}, we have that 
\begin{equation}\label{tag1lemacontractible}\tag{1}
\overline{\Gamma_n^\lambda} = \overline{\Gamma_1^f}
\end{equation}
where $\overline{\Gamma_1^f} \colon I_f \rightarrow I_f$ is the underlying map of $\Gamma_1^f \colon (I_f, \phi_f) \rightarrow (I_f, \phi_f)$. On the other hand, given a morphism $\alpha \colon \sigma \rightarrow \overline{\sigma}$ in $I_f$, Lemma \ref{existssimplecesinterval} gives a stretched 3-simplex $\eta_\alpha \colon \Delta^3 \actto I_f$ such that 
\begin{equation*}\label{tag2lemacontractible}\tag{2}
d_\perp d_\top \eta_\alpha = \alpha.
\end{equation*}
The modification $\Gamma$ assigns to $\mfunctor_f \eta_\alpha$ an invertible map $\Gamma_3^{\eta_\alpha} \colon (I_f, \eta_\alpha) \rightarrow (I_f, \eta_\alpha)$ such that $\overline{\Gamma_3^{\eta_\alpha}} \eta_\alpha = \eta_\alpha$. Furthermore,
\begin{align*}
\overline{\Gamma_3^{\eta_\alpha}} (\alpha) & = \overline{\Gamma_3^{\eta_\alpha}}(d_\perp d_\top \eta_\alpha) & \tag{by Eq.~\ref{tag2lemacontractible}} \\
& =  d_\perp d_\top \overline{\Gamma_3^{\eta_\alpha}}(\eta_\alpha) & \tag{since $\overline{\Gamma_3^{\eta_\alpha}}$ is a sim. map} \\
& = d_\perp d_\top (\eta_\alpha) & \\
& = \alpha. 
\end{align*}
By Definition \ref{defimodififorconj}, we have the equality
\begin{center}
\begin{tikzcd}
X_3 \arrow[r,shift left, bend left=18, ""{name =U1}, "I_3"]\arrow[r,shift right, bend right=20, ""{name =D1}, "I_3"'] & (U_X)_3 \arrow[r, "d_1d_1"]& (U_X)_1  \arrow[Rightarrow,shift left, bend left=20, from=U1, to=D1, shorten <=5pt, "\Gamma_3"'] 
\end{tikzcd}
 $=$
\begin{tikzcd}
X_3 \arrow[r, "d_1d_1"]& X_1 \arrow[r,shift left, bend left=20, ""{name =U2}, "I_1"]\arrow[r,shift right, bend right=20, ""{name =D2}, "I_1"'] & (U_X)_1.
\arrow[Rightarrow,shift left, bend left=20, from=U2, to=D2, shorten <=5pt, "\Gamma_1"']
\end{tikzcd}
\end{center}
This equation implies that $d_1d_1(\Gamma_3^{\eta_\alpha}) = \Gamma_1^{f}$. Since $d^1d^1$ is active, we have that $\overline{\Gamma_3^{\eta_\alpha}} = \overline{\Gamma_1^{f}}$ by Remark \ref{rema:notationmod}.  Hence altogether, for each $\alpha \in I_f$
\begin{align*}
\overline{\Gamma_n^\lambda}(\alpha) & = \overline{\Gamma_1^f}(\alpha) & \tag{by Eq.~(\ref{tag1lemacontractible})} \\
& = \overline{\Gamma_3^{\eta_\alpha}}(\alpha) & \\
& = \alpha.
\end{align*}
Since $\Gamma_n^\lambda$ is the identity arrow for each $\lambda \in X_n$, we have that $\Gamma$ is the identity modification.
\end{proof}

\begin{teo}\label{corocontractibleU}
Let $X$ be a rigid decomposition groupoid. The mapping groupoid $\Map_{\cDcmp}(X, U)$ is contractible.
\end{teo}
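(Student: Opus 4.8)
The plan is to combine the local contractibility of Lemma~\ref{teocontractiblefordesset} with the full-and-faithful comparison map $\jmath\colon U_X \to U$ of Lemma~\ref{comparasionfunctor} and the connectedness result of Theorem~\ref{conecctedconjecture}. Since we work at the level of $1$-types, $\Map_{\cDcmp}(X,U)$ is a $1$-groupoid, so it is contractible precisely when it is connected and the automorphism group $\operatorname{Aut}(I)$ of the base point $I = \jmath\circ I\colon X \to U$ (the classifying map) is trivial. Theorem~\ref{conecctedconjecture} supplies the connectedness: every CULF natural transformation $X \to U$ is isomorphic to $I$. Hence it only remains to show that $I\colon X \to U$ admits no nontrivial self-modification.

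First I would record that the classifying map into $U$ factors as $\jmath\circ I$, where $I\colon X\to U_X$ is the CULF map of Proposition~\ref{propoculfix} and $\jmath$ is the full and faithful inclusion of Lemma~\ref{comparasionfunctor}. Post-composition with a full and faithful simplicial map is full and faithful on the mapping groupoids $\Map_{\cDcmp}(X,-)$; concretely, a self-modification $\Gamma\colon I\Rightarrow I$ of the composite assigns to each $\lambda\in X_n$ an automorphism $\Gamma_n^\lambda$ of the object $(I_f,\phi_\lambda)\in U_n$, and each such object lies in the image of $\jmath$. Because $\jmath$ is fully faithful, the restriction $\operatorname{Aut}_{(U_X)_n}(I_f,\phi_\lambda)\to\operatorname{Aut}_{U_n}(I_f,\phi_\lambda)$ is an isomorphism, so every component $\Gamma_n^\lambda$ is the image under $\jmath$ of a unique automorphism in $(U_X)_n$.

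The key verification is that the modification coherence conditions of Definition~\ref{defimodififorconj} for the map into $U$ transport, along $\jmath$, to the corresponding conditions for a modification of $I\colon X\to U_X$. This is where the fact that $U$ is only pseudo-simplicial while $U_X$ is strict must be handled: one checks that on the image of $\jmath$ the $2$-cell data of the pseudo-simplicial structure of $U$ is carried by the strict simplicial structure of $U_X$ (recall $\jmath$ is a strict simplicial map by construction), so that the square identities for $\Gamma$ in $U$ are equivalent to the square identities for the transported family in $U_X$. I expect this compatibility check to be the main obstacle, since it requires unwinding the definition of $\jmath$ together with the pseudo-simplicial structure of $U$; granting it, the transported family is a genuine self-modification of $I\colon X\to U_X$.

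Finally I would invoke Lemma~\ref{teocontractiblefordesset}: since $\Map_{\cDcmp}(X,U_X)$ is contractible, $I\colon X\to U_X$ has only the identity self-modification, so its transported image $\Gamma$ is likewise the identity. Thus $\operatorname{Aut}(I)$ is trivial in $\Map_{\cDcmp}(X,U)$. Combined with the connectedness from Theorem~\ref{conecctedconjecture}, this shows that $\Map_{\cDcmp}(X,U)$ is a connected $1$-groupoid with trivial automorphism groups, hence contractible.
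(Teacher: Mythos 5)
Your proposal is correct and follows essentially the same route as the paper: factor $I$ through $\jmath\colon U_X\to U$, use the full-and-faithfulness of $\jmath$ (Lemma~\ref{comparasionfunctor}) to reduce self-modifications of $I\colon X\to U$ to self-modifications of $I\colon X\to U_X$, apply Lemma~\ref{teocontractiblefordesset}, and combine with the connectedness of Theorem~\ref{conecctedconjecture}. The paper simply asserts that $\jmath_!$ is full and faithful on mapping groupoids, whereas you unpack that transport step (including the strict-versus-pseudo compatibility); the substance is identical.
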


\begin{proof}
Each natural transformation from $X$ to $U$ is isomorphic to $I$ by Theorem \ref{conecctedconjecture}.  We can factor  $I \colon X  \rightarrow U$ as 
\begin{center}
\begin{tikzcd}
X \arrow[rr, "I"] \arrow[rd, "I"'] &                      & U \\
                                   & U_X. \arrow[ru, hook, "\jmath"'] &  
\end{tikzcd}
\end{center}
Since $\jmath \colon U_X \rightarrow U$ is full and faithful (\ref{comparasionfunctor}), we have that $\jmath_{!} \colon \Map_{\cDcmp}(X, U_X) \rightarrow \Map_{\cDcmp}(X, U)$ is also full and faithful. Since $\Map_{\cDcmp}(X, U_X)$ is contractible (\ref{teocontractiblefordesset}) and $\Map_{\cDcmp}(X, U)$ is connected (\ref{conecctedconjecture}), it follows that $\Map_{\cDcmp}(X, U)$ is contractible.
\end{proof}

\addcontentsline{toc}{section}{References}

\begin{footnotesize}

\end{footnotesize}
\bibliographystyle{scplain}

  \begin{tabular}{@{}l@{}}%
    \scriptsize{DEPARTAMENT DE MAT\`{E}MATIQUES, UNIVERSITAT AUT\`{O}NOMA DE BARCELONA}\\
    \textit{E-mail address}: \texttt{wforero@mat.uab.cat}
  \end{tabular}

\end{document}